\DeclareSymbolFont{tipa}{T3}{cmr}{m}{n}
\DeclareMathAccent{\invbreve}{\mathalpha}{tipa}{16}
\newcommand{\cev}[1]{\reflectbox{\ensuremath{\vec{\reflectbox{\ensuremath{#1}}}}}}
\theoremstyle{plain}
\newtheorem{theorem}{Theorem}
\newtheorem{corollary}[theorem]{Corollary}
\newtheorem{proposition}[theorem]{Proposition}
\newtheorem{lemma}[theorem]{Lemma}
\theoremstyle{definition}
\newcommand{\R}{\mathbb{R}}
\newcommand{\Z}{\mathbb{Z}}
\newcommand{\rmd}{\mathrm{d}}
\begin{document}
\title{\vspace{-1cm}\bf Discrete flat disks: rigid quadrangulations}
\author{\textsc{Timothy Budd}\footnote{IMAPP, Radboud University, Nijmegen, The Netherlands. Email: \href{mailto:t.budd@science.ru.nl}{t.budd@science.ru.nl}}}
\maketitle

\begin{abstract}
    Inspired by a question of Ferrari in the physics context of JT gravity, we introduce and enumerate a combinatorial family of quadrangulations of the disk, called \emph{rigid quadrangulations}.
    These form a subclass of the flat quadrangulations in the sense that every inner vertex has degree $4$, and therefore it can be viewed as a discrete model of flat metrics on the disk. 
    Our main result is a bijection between rigid quadrangulations and certain colorful $\Z$-labeled quadrangulations of the sphere, together with a dictionary relating a variety of natural statistics on both sides.
    Adaptions of the bijection to various boundary conditions allow us to import recent enumerative results for colorful quadrangulation obtained by Bousquet-Mélou and Elvey Price.
    We discuss some consequences of the enumeration of rigid quadrangulations for a flat version of JT gravity at finite cutoff, and comment on potential scaling limits.
\end{abstract}

\begin{figure}[h]
    \centering
    \includegraphics[width=.9\linewidth]{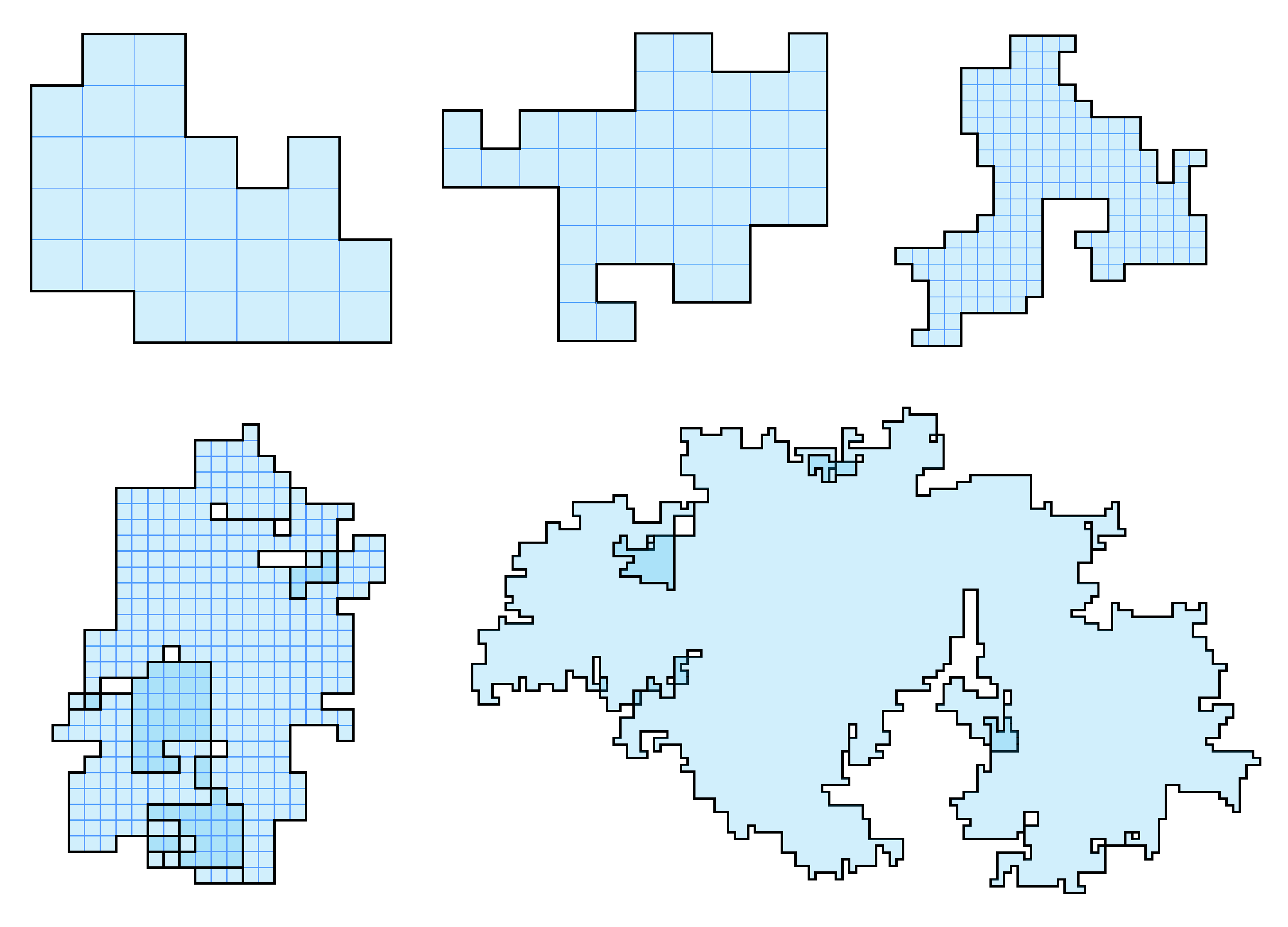}
    \caption{Uniform random rigid quadrangulations of fixed perimeter immersed in the plane.\label{fig:uniformrigid}}
\end{figure}

\tableofcontents

\section{Introduction}

The combinatorial study of planar maps has a long history, dating back to the works of Tutte in the 1960s \cite{Tutte_census_1963}.
The enumeration of many classes of planar maps has been achieved in the years since, displaying a great deal of universality regarding the asymptotics.
This is reflected in the universality of the scaling limit of the metric structure of large random planar maps in terms of the Brownian sphere \cite{LeGall_Uniqueness_2013}, providing a rigorous construction of two-dimensional quantum gravity envisioned in the physics literature.
To change the universality class one may consider maps with carefully chosen additional structure, that physicists would interpret as introducing critical matter systems coupled to gravity.
A more radical way to change the large scale geometry is to impose stringent curvature constraints by not only controlling the face degrees, but also the vertex degrees.
Perhaps the simplest example of this is the case of flat quadrangulations, where both the vertices and faces are required to have degree $4$.
Of course, there is only one such quadrangulation in the planar case, namely the infinite square lattice, while flat quadrangulations of the torus can be classified with some effort (see e.g.\ \cite{Pardo_Square_2023}).
Allowing for several vertices of different degree in a controlled fashion, the enumeration becomes a lot more interesting and brings one into the realm of square-tiled surfaces, origami, translation surfaces and Teichmüller dynamics \cite{Zorich_Flat_2006}.
This regime has also been studied in the physics literature in the context of dually-weighted matrix models \cite{Kazakov_Almost_1996,Kazakov_Disc_2022}. 
This work, instead, deals with flat quadrangulations with a boundary, particularly  the case of quadrangulations of the disk.
Away from the boundary these are still locally isomorphic to the square lattice, so the degrees of freedom are almost entirely determined by the geometry of the boundary.

This combinatorial analysis is inspired by recent studies in the high energy physics literature of Jackiw-Teitelboim (JT) quantum gravity and its holographic properties, see \cite{Stanford_JT_2020,Saad_JT_2019} and \cite{Mertens_Solvable_2023} for a survey.
In short, JT gravity is a two-dimensional toy model of gravity coupled to a dilaton field that ensures dynamically that the geometry is constantly curved.
Whereas most works in JT quantum gravity have focused on the degrees of freedom in the boundary geometry in an asymptotic (or non-compact) limit, it is desirable to understand the theory also at \emph{finite cutoff}, where the field theory should describe compact constant curvature metrics on surfaces with boundary.  
Ferrari recently introduced a lattice approach \cite{ferrari2025jackiw,ferrari2024randomdisksconstantcurvature} to this question, proposing that in Euclidean signature and zero curvature such a field theory on the disk may be obtained from a scaling limit of uniform random flat quadrangulation with a boundary of prescribed length.
The enumeration problem associated to the precise model put forward in \cite[Section~3.5]{ferrari2025jackiw} is still open, although several steps towards a matrix integral solution were described in \cite[Section~5]{ferrari2025jackiw}.
The rigid quadrangulations introduced in this work can be understood as a subclass of flat quadrangulations for which the enumeration problem is tractable.

\subsection{Flat and rigid quadrangulations}\label{sec:rigiddef}

To introduce our main combinatorial family we recall some terminology about planar maps.
A \emph{planar map} is a connected multigraph embedded in the plane, viewed up to orientation-preserving homeomorphisms of the sphere.
We will always take maps to be \emph{rooted}, meaning that a distinguished oriented edge is specified.
The vertex at the origin of the root and the face on the left of the root are called the \emph{root vertex} and the \emph{root face}, respectively.
A \emph{quadrangulation of the disk of perimeter }$2k$ is a rooted planar map with all faces of degree $4$ except for the root face of degree $2k$.
Such a map will always be drawn in the plane with the root face on the outside, which we call the \emph{boundary} and orient in counterclockwise direction, and we suffice with indicating only the root vertex (green dot).
Vertices and edges that are not on the boundary are said to be \emph{inner}.
A quadrangulation (of the disk) is \emph{flat} if each inner vertex has degree $4$ and its boundary is simple (meaning that the $2k$ boundary vertices are distinct).
A boundary vertex or corner is called \emph{convex} if it is of degree $2$, \emph{straight} if it is of degree $3$, or \emph{concave} if it is of degree $4$ or larger.
In a flat quadrangulation we require the root vertex to be convex.
A \emph{side} of a flat quadrangulation is a maximal sequence of boundary edges connected by straight boundary vertices.

\begin{figure}
    \centering
    \includegraphics[width=.9\linewidth]{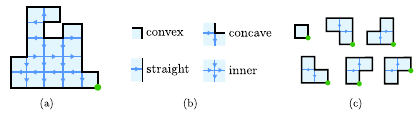}
    \caption{(a) Example of a rigid quadrangulation $\mathbf{r} \in \mathcal{R}_8$. (b) The four types of vertices that occur. (c)~Illustrating the $|\mathcal{R}_3| = 1$ and $|\mathcal{R}_4| = 5$ rigid quadrangulations with $3$ respectively $4$ non-root convex corners. \label{fig:rigidquadrangulation}}
\end{figure}

A \emph{ray} in a flat quadrangulation is a path of inner edges starting and ending on a boundary vertex but otherwise only visiting inner vertices, where it is required to go straight (i.e.\ consecutive edges meet an inner vertex at opposite sides).
Note that every inner edge belongs to a unique (unoriented) ray.
A \emph{rigid quadrangulation (of the disk)} is a flat quadrangulation in which all concave vertices are of degree $4$ and in which each ray connects a concave corner to a straight boundary vertex.
In this case, each ray and therefore each inner edge is naturally oriented from concave to straight.
See Figure~\ref{fig:rigidquadrangulation}a for an example.
These conditions for a quadrangulation of the disk with oriented inner edges to be rigid can equivalently be described in terms of allowed types of vertices shown in Figure~\ref{fig:rigidquadrangulation}b.
For $n\geq 3$, the set of rigid quadrangulations with $n+1$ convex corners and $n-3$ concave corners is denoted $\mathcal{R}_n$, and $\mathcal{R} = \bigcup_{n\geq 3} \mathcal{R}_n$.
All rigid quadrangulations for $n=3,4$ are shown in Figure~\ref{fig:rigidquadrangulation}c.
Our first enumerative result is the following.

\begin{theorem}\label{thm:rigidquadenum}
    The generating function $Z(t) = \sum_{n=3}^\infty |\mathcal{R}_n| t^n = t^3 + 5 t^4 + 33 t^5 + \cdots$ of rooted rigid quadrangulations of the disk is given by $Z(t) = \tfrac{1}{4}(t-2t^2 - R(t))$, where $R(t)=t-2t^2 - \cdots$ is the unique formal power series solution to
    \begin{equation}
        \sum_{n=0}^\infty \frac{1}{n+1} \binom{2n}{n}^2 R(t)^{n+1} = t.
    \end{equation}
    In particular, the number of rigid quadrangulations is asymptotic to
    \begin{equation}
        |\mathcal{R}_n| \sim \frac{(4\pi)^n}{16 \,n^2 \log^2 n} \quad\text{as } n\to\infty.
    \end{equation} 
\end{theorem}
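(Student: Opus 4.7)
The plan is to deduce Theorem~\ref{thm:rigidquadenum} from the bijection announced in the abstract---which transforms each rigid quadrangulation into a colorful $\Z$-labeled quadrangulation of the sphere---together with the enumerative results of Bousquet-M\'elou and Elvey Price for the latter class. Under this correspondence the parameter $n$ indexing $\mathcal{R}_n$ is transported into a natural size statistic on the labeled-sphere side whose generating function those authors express as the compositional inverse of $L(R) := \sum_{n\geq 0}\tfrac{1}{n+1}\binom{2n}{n}^2 R^{n+1}$.

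To obtain the exact formula $Z(t) = \tfrac14(t - 2t^2 - R(t))$ I would first make the bijection explicit enough to pin down exactly which statistic it transports, and then invoke the Bousquet-M\'elou--Elvey Price formula. The equation $L(R(t)) = t$ follows at once, and the precise shape of $Z(t)$ is obtained by bookkeeping: the polynomial correction $t - 2t^2$ should account for a small finite-size discrepancy (the degenerate rigid quadrangulations without concave corners, or an explicit shift between rooted-corner and rooted-edge conventions), while the prefactor $\tfrac14$ absorbs a rooting/symmetry factor between the two models. As a consistency check, term-by-term inversion of $L$ yields $R(t) = t - 2 t^2 - 4 t^3 - 20 t^4 - 132 t^5 - \cdots$ and hence $Z(t) = t^3 + 5 t^4 + 33 t^5 + \cdots$, matching the prescribed values $|\mathcal{R}_3| = 1$, $|\mathcal{R}_4| = 5$, $|\mathcal{R}_5| = 33$.

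For the asymptotics I would rely on singularity analysis of $R(t)$. The key input is the identification $L'(R) = \sum_{n\geq 0}\binom{2n}{n}^2 R^n = \tfrac{2}{\pi} K(4\sqrt{R})$, where $K$ is the complete elliptic integral of the first kind; this has a logarithmic divergence at $R = 1/16$. Since $L$ has non-negative coefficients and $L'(R) > 0$ on $(0, 1/16)$, the dominant singularity of $R(t)$ sits at $t_c := L(1/16)$. Changing variables to $k = 4\sqrt R$ yields $t_c = \tfrac{1}{4\pi}\int_0^1 k K(k)\,\rmd k$, and swapping the order of integration reduces the latter to $\int_0^{\pi/2} \rmd\theta/(1+\cos\theta) = 1$, so $t_c = 1/(4\pi)$. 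Near $R = 1/16$ the expansion $L'(R) \sim -\tfrac{1}{\pi}\log(1 - 16R)$ integrates to $t_c - t \sim \tfrac{1}{\pi}(1/16 - R)\log(1/(1/16 - R))$, which inverts to $1/16 - R(t) \sim \pi(t_c - t)/\log(1/(t_c - t))$. A standard transfer theorem for log-modified singularities (Flajolet--Sedgewick; equivalently $[z^n](1-z)/\log(1/(1-z)) \sim 1/(n^2 \log^2 n)$) then yields $[t^n] Z(t) \sim \tfrac14 \cdot \pi \cdot t_c \cdot t_c^{-n}/(n^2 \log^2 n) = (4\pi)^n/(16\, n^2 \log^2 n)$.

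The main obstacle, I expect, is not the singularity analysis---which is a fairly mechanical elliptic-integral transfer---but rather constructing the bijection and correctly identifying the transported statistic so that the right specialisation of the Bousquet-M\'elou--Elvey Price formula applies. The explicit correction $t - 2t^2$ and the prefactor $\tfrac14$ in the statement are the tell-tale signs of bookkeeping subtleties (rooting conventions, handling of the small cases without concave corners, and potentially a mild multiplicity for exceptional configurations) that need careful treatment so that the identity holds term by term. A secondary but unavoidable technical point is verifying analytic continuation of $R(t)$ into a $\Delta$-domain at $t_c$, which is routine via the implicit function theorem but must be addressed to legitimately apply the transfer theorem.
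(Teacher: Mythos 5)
Your proposal follows essentially the same route as the paper: Theorem~\ref{thm:rigidquadenum} is deduced from the bijection $\Psi:\mathcal{R}\to\mathcal{C}$ of Theorem~\ref{thm:rootedbijection} together with Bousquet-M\'elou and Elvey Price's enumeration of colorful quadrangulations, and your singularity analysis of $R(t)$ (elliptic-integral identification, $t_c=1/(4\pi)$, log-modified transfer) correctly reproduces the asymptotics that the paper simply cites from their Theorem~1.2. The one piece of bookkeeping you left open is resolved in the paper as follows: their generating function $\tfrac{1}{2t^2}\left(t-2t^2-R(t)\right)$ counts colorful quadrangulations with weight $t$ per \emph{face}, so multiplying by $t^2$ (a quadrangulation of the sphere has two more vertices than faces) and by $\tfrac12$ (exactly half of all colorful quadrangulations have labels $(0,1,2,1)$ on the face to the right of the root, and this half is the set $\mathcal{C}$ in bijection with $\mathcal{R}$) gives $\tfrac14\left(t-2t^2-R(t)\right)$; in particular the $t-2t^2$ is already part of their formula rather than a correction for degenerate or exceptional configurations.
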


\noindent
This sequence appears in the OEIS as \cite[A324312]{oeis}.

\subsection{Bijection with colorful quadrangulations}\label{sec:bijectionintro}

A \emph{$\mathbb{Z}$-labeled quadrangulation} is a (rooted) quadrangulation together with a $\mathbb{Z}$-labeling of its vertices, such that labels at the endpoints of each edge differ by exactly $1$, and such that the root edge points from label $1$ to label $0$.
Following \cite{Bousquet-Melou_generating_2020}, we let a \emph{colorful ($\mathbb{Z}$-labeled) quadrangulation} be a $\mathbb{Z}$-labeled quadrangulation in which for each face the labels read $(r-1,r,r+1,r)$ in cyclic order for some $r\in\Z$ (see Figure~\ref{fig:rigidandlabeledquad}b). 
This terminology stems from the observation that one can equivalently think of the labeling as a proper 3-coloring of the vertices, by considering the labels modulo $3$, and then the condition amounts to every face receiving all three colors.

\begin{figure}
    \centering
    \includegraphics[width=.6\linewidth]{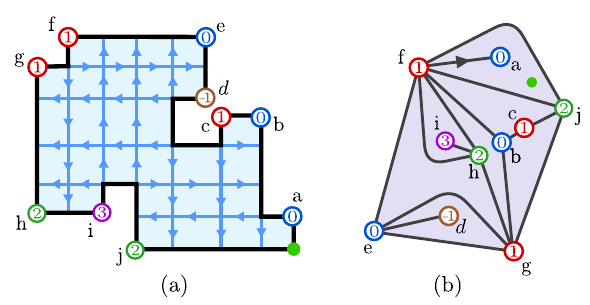}
    \caption{(a) The same rigid quadrangulation $\mathbf{r}$ as before with all non-root convex corners labeled by their turning number. (b) The associated colorful quadrangulation $\mathfrak{q}=\Psi(\mathbf{r})$. The identification $v(c)$ of its vertices with the convex corners of $\mathbf{r}$ is indicated by letters. \label{fig:rigidandlabeledquad}}
\end{figure}

To any rigid quadrangulation $\mathbf{r}$ one can associate a colorful quadrangulation $\mathfrak{q}=\Psi(\mathbf{r})$ as follows.
First we label each convex corner $c$ of $\mathbf{r}$ except the root corner by its \emph{turning number} $\tau(c)$, which is the number of convex corners minus the number of concave corners encountered in counterclockwise direction around the boundary strictly between the root corner and $c$ (see Figure~\ref{fig:rigidandlabeledquad}a).
Next we associate a topological sphere to $\mathbf{r}$ by considering its \emph{double}, in the sense that we glue the boundary of $\mathbf{r}$ to a mirror copy of itself that we think of as the backside of the disk.
We let the vertices of $\mathfrak{q}$ be all the convex corners of $\mathbf{r}$ except for the root corner, and we view them as marked points in the double, labeled by their turning number.
\begin{figure}[b]
    \centering
    \includegraphics[width=\linewidth]{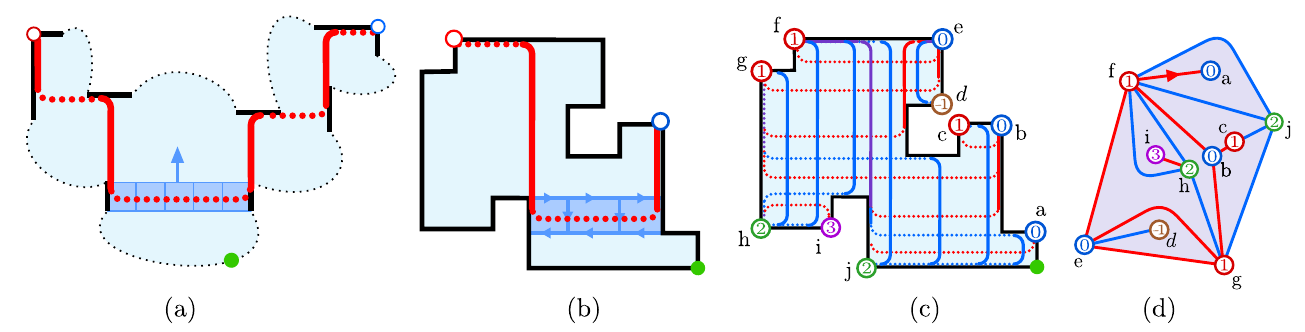}
    \caption{(a) Schematic illustration of the edge of $\mathfrak{q}$ associated to a row of $\mathbf{r}$. The solid vertical segments are drawn on the front and the dotted horizontal segments on the back of the double. (b)~Example edge for a single row. (c) All edges of $\mathfrak{q}$: those associated to the rows in red, and to the columns in blue. (d) The colorful quadrangulation $\mathfrak{q}$ with even edges in red and odd edges in blue.\label{fig:edgedrawing}}
\end{figure}
Then we draw an edge of $\mathfrak{q}$ in the double for each row and each column of $\mathbf{r}$ as follows.
A row or column has a natural transverse orientation pointing away from the root vertex (see the arrow in Figure~\ref{fig:edgedrawing}a).
Then the middle section of the row or column is extended to a path connecting two convex corners by alternating the transverse and parallel direction whenever hitting the side of the disk, as illustrated in Figure~\ref{fig:edgedrawing}a.
By convention the path travels vertically on the front of the double and horizontally on the back.
With this convention the edges of $\mathfrak{q}$ do not cross, but several edges ending on the same vertex may merge before reaching that vertex.
However, deforming the edge paths slightly to resolve the merging leads to a unique planar map $\mathfrak{q}$ in the double (Figure~\ref{fig:edgedrawing}c \& d).
It has a distinguished face with labels $(0,1,2,1)$ containing the root corner of $\mathbf{r}$.
We root $\mathfrak{q}$ on the unique edge pointing from $1$ to $0$ that has this distinguished face on its right.

For $n \geq 3$, let $\mathcal{C}_n$ denote the set of colorful quadrangulations with $n$ vertices such that the face to the right of the root has labels $(0,1,2,1)$.
Note that this is precisely half of all colorful quadrangulations with $n$ vertices, since the face to the right of the root can only have labels $(0,1,2,1)$ and $(0,1,0,-1)$ and there is a symmetry relating both sets.
A generating function for $\mathcal{C} = \bigcup_{n\geq 3} \mathcal{C}_n$ with control on the number of faces was obtained by Bousquet-M\'elou and Elvey Price in \cite[Theorem~1.1]{Bousquet-Melou_generating_2020}.
Theorem \ref{thm:rigidquadenum} is then a direct consequence of the following.

\begin{theorem}\label{thm:rootedbijection}
    This mapping $\Psi : \mathcal{R} \to \mathcal{C}$ is a bijection between rigid quadrangulations and colorful quadrangulations. Moreover, there is a one-to-one correspondence between the non-root convex corners of $\mathbf{r}$ and the vertices of $\mathfrak{q} = \Psi(\mathbf{r})$ such that the turning number of a corner agrees with the label of the corresponding vertex.
\end{theorem}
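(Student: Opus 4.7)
The plan is to verify that $\Psi$ takes values in $\mathcal{C}$ and then construct an explicit inverse $\Phi : \mathcal{C} \to \mathcal{R}$. First I would check that every zigzag extension associated to a row or column of $\mathbf{r}$ terminates at a non-root convex corner. Each bounce of the zigzag happens at a straight or concave boundary vertex and alternates between front and back of the double. Termination is forced by the fact that the algebraic winding around the boundary of $\mathbf{r}$ equals $4$ (convex corners contributing $+1$, concave corners $-1$), which prevents infinite bouncing. Non-crossing of distinct edges on the double then follows from the local transversality of rows and columns in the interior, together with the front/back alternation at each boundary bounce, so $\mathfrak{q}$ is a well-defined planar map on the sphere.

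Next I would verify the labelings and face structure. Along each edge of $\mathfrak{q}$, tracking the convex and concave corners encountered along the front and back boundary segments, the bouncing rules force most contributions to cancel in pairs, leaving a net turning-number change of exactly $\pm 1$ between the two endpoints of the edge. For the colorful face condition, the key observation is that faces of $\mathfrak{q}$ should biject with the inner vertices of $\mathbf{r}$: around each inner vertex the four adjacent rows/columns provide the four edges of a quadrilateral face of $\mathfrak{q}$. A local case analysis at an inner vertex, using how turning number increments across a row or column, yields the required pattern $(r{-}1, r, r{+}1, r)$. The labels $(0,1,2,1)$ at the root face and the rooting convention follow from the conventions at the root corner of $\mathbf{r}$ and the choice that the first non-root convex corner has turning number $1$.

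For the inverse, the plan is to view each face of $\mathfrak{q}$ as a unit square of the reconstructed rigid quadrangulation, with its four corners realized as distinct convex corners of $\mathbf{r}$. The edges of $\mathfrak{q}$ split into \emph{row} and \emph{column} types via a parity argument: at each vertex $v$ of label $\ell$, the incident edges alternate in cyclic order between those reaching labels $\ell{+}1$ and $\ell{-}1$, giving a canonical bipartition that is globally consistent across the whole map. One then assembles $\mathbf{r}$ by gluing unit squares along common row/column edges of $\mathfrak{q}$, and reconstructs the boundary by walking around the root face and unfolding the zigzag steps into a sequence of convex corners (the vertices of $\mathfrak{q}$) interspersed with concave and straight boundary vertices at the forced bending points. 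The root corner is placed by the convention at the root face of $\mathfrak{q}$.

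The main obstacle will be verifying that $\Phi(\mathfrak{q})$ always yields a valid rigid quadrangulation from colorful data alone: one must show that the reconstructed boundary is simple, that every concave corner has degree exactly $4$, and that each ray terminates on a straight boundary vertex. These properties are not directly visible from the colorful conditions and will require careful case analysis at each face and vertex of $\mathfrak{q}$. Once both maps are shown to be well-defined, the identities $\Psi\circ\Phi = \mathrm{id}$ and $\Phi\circ\Psi = \mathrm{id}$ follow from the local nature of both constructions, and the labeled correspondence between non-root convex corners of $\mathbf{r}$ and vertices of $\mathfrak{q}$ is built into the definition of $\Psi$.
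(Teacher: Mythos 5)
Your overall strategy (verify directly that $\Psi$ lands in $\mathcal{C}$, then build a local inverse $\Phi$) is different from the paper's, which never argues locally from the edge-drawing picture: the paper instead matches a row-by-row exploration of $\mathbf{r}$ with a peeling exploration of $\mathfrak{q}$ step by step (Lemmas~\ref{lem:minimalsubmaps} and~\ref{lem:minimalsubmapscq}), obtains a recursive bijection $\Psi^{(p)}$ whose inverse is automatic, shows by induction that the edge-drawing map agrees with it (Proposition~\ref{prop:extendbijequivalence}), and finally reduces Theorem~\ref{thm:rootedbijection} to the base case via $\mathsf{Zip}$ and $\mathsf{Expand}$. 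Unfortunately your route as described rests on a false structural claim. You assert that the faces of $\mathfrak{q}$ biject with the \emph{inner vertices} of $\mathbf{r}$, with the four rows/columns around an inner vertex giving the four sides of a face. This cannot hold: a colorful quadrangulation with $n$ vertices has exactly $n-2$ faces, whereas a rigid quadrangulation in $\mathcal{R}_n$ can have anywhere from $0$ inner vertices (the unit square, $n=3$, whose image has one face) up to order $n^2$ of them. The correct correspondence, proved in Theorem~\ref{thm:dictionary}\ref{it:face}, is between faces of $\mathfrak{q}$ and the $n-3$ concave corners of $\mathbf{r}$ together with the root corner; the paper explicitly notes that inner vertices of $\mathbf{r}$ correspond only to nonlocal \emph{pairs} of faces of $\mathfrak{q}$. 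So your verification of the colorful face condition is based on the wrong local picture, and your inverse --- gluing one unit square per face of $\mathfrak{q}$ --- would reconstruct a map with $n-2$ quadrangles, far fewer than $\mathbf{r}$ actually has.

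Two further points. The claimed canonical bipartition of edges via alternation of $\ell+1$ and $\ell-1$ neighbours in cyclic order around each vertex fails at local minima and maxima (all incident edges go the same way there); the even/odd split must instead be read off from the parity of the labels themselves. And the step you flag as ``the main obstacle'' --- showing that $\Phi(\mathfrak{q})$ has simple boundary, degree-$4$ concave corners, and rays ending on straight vertices --- is precisely the hard content of the theorem, for which you offer no mechanism. This is exactly the difficulty the paper's exploration/peeling argument is designed to circumvent: because the inverse is assembled one peeling step at a time from pieces that are rigid by construction, well-definedness never has to be checked globally. As it stands the proposal has a genuine gap in both directions of the bijection.
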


Notice that the edge drawing algorithm on the double of $\mathbf{r}$ is unchanged when $\mathbf{r}$ is replaced by its mirror image $\mathsf{Mirror}(\mathbf{r})$ (which amounts to flipping over the double such the front of $\mathbf{r}$ becomes the back of $\mathsf{Mirror}(\mathbf{r})$). 
The only change is the labeling of the convex corners by their turning number. 
Denoting by $\mathsf{Relabel}(\mathfrak{q})$ the colorful quadrangulation obtained from $\mathfrak{q}$ by relabeling all vertices by $j \mapsto 2-j$, and appropriately moving the root edge along the face on its right to make it point from $1$ to $0$ again, we have the relation 
\begin{align}
    \Psi \circ \mathsf{Mirror} = \mathsf{Relabel} \circ \Psi.\label{eq:Psisymmetry}
\end{align}
This will come in handy when relating statistics between rigid and colorful quadrangulations.

\subsection{Rectilinear disks}\label{sec:rectilinear}

The definition of rigid quadrangulations may seem a bit artificial, but we will argue now that it arises naturally in the context of polygonal disks with real side lengths.
Let $\mathbb{D}$ be the unit disk in $\mathbb{R}^2$.
We say that a Riemannian metric $g$ on $\mathbb{D}$ is \emph{rectilinear} if $g$ is flat in the interior of $\mathbb{D}$, and the boundary $\partial\mathbb{D}$ is piece-wise geodesic with corners of interior angle $\pi/2$ or $3\pi/2$, called \emph{convex corners} and \emph{concave corners}, respectively.
A \emph{rooted} rectilinear metric is such a metric together with a distinguished convex corner.
Then a \emph{rectilinear disk} $D$ is an equivalence class of rooted rectilinear metrics under isometries that preserve the orientation and the root.
For an integer $n\geq 3$ we let $\mathcal{M}_{n}$ be the moduli space of rectilinear disks with $2n-2$ corners.

A rectilinear disk $D\in\mathcal{M}_{n}$ admits a locally isometric immersion $f_D : D \to \mathbb{R}^2$ into the Euclidean plane $\mathbb{R}^2$, which becomes unique when we require that the root is sent to the origin with the disk aligned with the top-left quadrant (see Figure~\ref{fig:rectilinear}a).
It can happen that $f_D$ is injective, so that $D$ is isometric to a (rectilinear) Jordan domain in the Euclidean plane, but generally $f_D$ is not injective, i.e.\ its image may self-overlap in $\mathbb{R}^2$. 
However, one may see that the rectilinear disks in a neighborhood of $D$ in $\mathcal{M}_{n}$ are parametrized by $2n-4$ side lengths $L_1,\ldots,L_{2n-4}$, namely all but one of the horizontal and all but one of the vertical side lengths (the remaining two side lengths are determined by closure).
We may thus equip $\mathcal{M}_{n}$ with a properly normalized $(2n-4)$-dimensional Lebesgue measure
\begin{align}
    \mu_n \coloneqq (2n-5)!\,\rmd L_1\rmd L_2 \cdots \rmd L_{2n-4}.
\end{align}
Let $\mathsf{Perim} : \mathcal{M}_{n} \to \R_{>0}$ be the half-perimeter of the disk.
By scaling symmetry the push-forward measure $\mathsf{Perim}_* \mu_n = V_n(L)\rmd L$ on $\R$ has a density $V_n(L)$ that is a monomial of degree $2n-5$, expressing the volume of the moduli space of rectilinear disks with fixed half-perimeter $L$.
For example, the rectilinear disks $D \in \mathcal{M}_3$ are rectangles of side lengths $L_1$, $L_2$ and therefore $\mathsf{Perim}(D) = L_1 + L_2$.
We thus find $V_3(L) = L$, because $\mathsf{Perim}_* \mu_3 = \mathsf{Perim}_*(\rmd L_1\rmd L_2) = L \rmd L$.

\begin{figure}
    \centering
    \includegraphics[width=.7\linewidth]{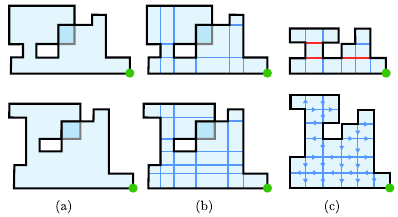}
    \caption{(a) Two examples of rectilinear disks in $\mathcal{M}_{10}$. (b) Their subdivisions into rectangles. (c) The corresponding combinatorial type. Both are flat quadrangulations, but only the bottom one is a rigid quadrangulation in the sense that every ray starts at a concave corner and ends on a straight boundary vertex. This is not the case for the rays marked in red in the top example, which start and end on concave vertices. \label{fig:rectilinear}}
\end{figure}

\begin{figure}[h]
    \centering
    \includegraphics[width=.9\linewidth]{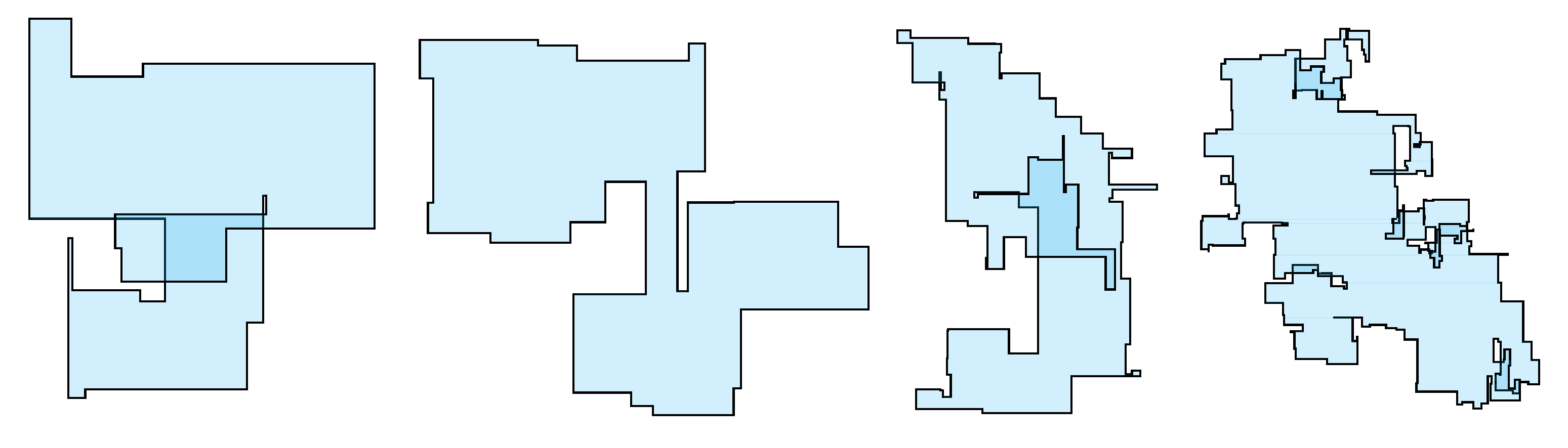}
    \caption{Uniform random rectilinear disks of fixed perimeter immersed in the plane.\label{fig:uniformrect}}
\end{figure}

To a rectilinear disk $D$ we may naturally associate a rooted flat quadrangulation $\mathsf{Type}(D)$, that we call the \emph{combinatorial type} of $D$.
It is obtained by drawing for each of the $n-3$ concave corners two rays extending the adjacent sides inwards until hitting an opposite side (Figure~\ref{fig:rectilinear}b).
In this way we obtain a subdivision of $D$ into rectangles and a corresponding flat quadrangulation $\mathbf{f} = \mathsf{Type}(D)$ rooted at a convex corner.
Since pairs of rays may overlap, $\mathbf{f}$ has at most $n-2$ rows and at most $n-2$ columns, say $k \leq 2n-4$ in total.
It is easily seen that the family $\mathsf{Type}^{-1}(\mathbf{f}) \subset \mathcal{M}_n$ of disks sharing the same combinatorial type $\mathbf{f}$ is parametrized precisely by the $k$ widths $w_1,\ldots,w_k > 0$ of the rows and columns of $\mathbf{f}$.
This gives rise to a cell decomposition of $\mathcal{M}_n$, in which the top-dimensional cells are indexed by flat quadrangulations with the maximal number $k=2n-4$ of rows and columns.
These are precisely the rigid quadrangulations $\mathcal{R}$.
Expressed in terms of the $2n-4$ widths, the measure $\mu_n$ on $\mathsf{Type}^{-1}(\mathbf{r}) \subset \mathcal{M}_n$ for $\mathbf{r}\in\mathcal{R}$ is
\begin{align*}
    \mu_n = (2n-5)!\,\rmd w_1\rmd w_2 \cdots \rmd w_{2n-4},
\end{align*}
while $\mathsf{Perim}(D) = \sum_{i=1}^{2n-4} w_i$.
Hence, the volume of the rectilinear disks of half-perimeter $L$ in $\mathsf{Type}^{-1}(\mathbf{r})$ is $L^{2n-5}$ independent of $\mathbf{r}$.
We conclude the following.

\begin{corollary}
    The volume $V_n(L)$ of the moduli space $\mathcal{M}_n$ of rectilinear disks of half-perimeter $L$ and $n+1$ convex corners is $V_n(L) = |\mathcal{R}_n|\, L^{2n-5}$, with $|\mathcal{R}_n|$ as in Theorem~\ref{thm:rigidquadenum}.
\end{corollary}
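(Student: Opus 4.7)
The corollary is essentially a packaging of the structural facts already established in the body of Section~\ref{sec:rectilinear}, so my plan is to assemble them into a genuine integration calculation and verify that nothing has been swept under the rug.

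First I would argue that, for the purpose of computing the pushforward $\mathsf{Perim}_*\mu_n$, only the top-dimensional cells of the decomposition of $\mathcal{M}_n$ contribute. Concretely, if $\mathbf{f}$ is a flat quadrangulation with $k < 2n-4$ rows and columns, then $\mathsf{Type}^{-1}(\mathbf{f})$ is parametrized by only $k$ widths, so it embeds as a $k$-dimensional subset of $\mathcal{M}_n$, which has $\mu_n$-measure zero. Since Section~\ref{sec:rectilinear} already identifies the top-dimensional cells precisely with the rigid quadrangulations in $\mathcal{R}_n$, we have
\begin{equation*}
    \mathsf{Perim}_*\mu_n \;=\; \sum_{\mathbf{r}\in\mathcal{R}_n} \mathsf{Perim}_*\bigl(\mu_n\big|_{\mathsf{Type}^{-1}(\mathbf{r})}\bigr)
\end{equation*}
up to sets of Lebesgue measure zero on $\mathbb{R}_{>0}$.

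Next I would compute the contribution of a single cell. Fix $\mathbf{r}\in\mathcal{R}_n$. By the identification of $\mathsf{Type}^{-1}(\mathbf{r})$ with $\mathbb{R}_{>0}^{2n-4}$ via the widths $(w_1,\ldots,w_{2n-4})$ and the stated forms $\mu_n = (2n-5)!\,\rmd w_1\cdots\rmd w_{2n-4}$ and $\mathsf{Perim}(D) = w_1+\cdots+w_{2n-4}$, the cumulative distribution of the pushforward is
\begin{equation*}
    (2n-5)! \int_{\substack{w_i > 0 \\ w_1+\cdots+w_{2n-4}\leq L}} \rmd w_1\cdots\rmd w_{2n-4} \;=\; (2n-5)!\cdot \frac{L^{2n-4}}{(2n-4)!} \;=\; \frac{L^{2n-4}}{2n-4},
\end{equation*}
using the standard volume of a $(2n-4)$-simplex. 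Differentiating in $L$ yields the density $L^{2n-5}$, so $\mathsf{Perim}_*(\mu_n|_{\mathsf{Type}^{-1}(\mathbf{r})}) = L^{2n-5}\,\rmd L$, independent of $\mathbf{r}$ as anticipated.

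Finally I would sum this contribution over $\mathbf{r}\in\mathcal{R}_n$, obtaining $V_n(L)\,\rmd L = |\mathcal{R}_n|\,L^{2n-5}\,\rmd L$, and invoke Theorem~\ref{thm:rigidquadenum} for the explicit count. There is no real obstacle: the only mildly nontrivial point is to be confident that the cell decomposition of $\mathcal{M}_n$ really is a measurable partition whose lower-dimensional cells may be discarded, which is immediate from the dimension count above, and that the widths form global coordinates on each top cell, which is the content of the paragraph preceding the corollary.
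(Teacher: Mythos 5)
Your proposal is correct and follows essentially the same route as the paper, which establishes the corollary in the paragraphs preceding it: lower-dimensional cells are discarded, each top cell indexed by a rigid quadrangulation contributes volume $L^{2n-5}$ via the simplex computation in the width coordinates, and one sums over $\mathcal{R}_n$. Your explicit simplex integral $(2n-5)!\,L^{2n-4}/(2n-4)! = L^{2n-4}/(2n-4)$ followed by differentiation just fills in the arithmetic the paper states directly.
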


One may also interpret this result probabilistically in the following sense.
For any $L > 0$, upon conditioning on $\mathsf{Perim}(D)=L$, the measure $\mu_n$ defines the \emph{uniform random rectilinear disks} of half-perimeter $L$ (see Figure~\ref{fig:uniformrect}).
Since its combinatorial type is almost surely a rigid quadrangulation, its law is that of the uniform random rigid quadrangulation in $\mathcal{R}_n$ (see Figure~\ref{fig:uniformrigid} on the front page) with the sequence of widths of its rows and columns given by an independent uniform point in the $(2n-5)$-dimensional simplex $\{\sum_{i=1}^{2n-4} w_i = L\}$.

\subsection{Dictionary}\label{sec:dictionary}

The bijection $\Psi$ relates a variety of natural statistics on rigid quadrangulations to equally natural statistics on colorful quadrangulations.
In order to discuss this dictionary, it is useful to introduce the notion of \emph{geodesic to the root} from a point in a rigid quadrangulation $\mathbf{r}$.
It is most conveniently defined by considering an arbitrary rectilinear disk $D$ with combinatorial type $\mathsf{Type}(D) = \mathbf{r}$, in the sense of Section~\ref{sec:rectilinear}.
Then for a point $x\in D$ we may consider the shortest curve $\Gamma_x$ in $D$ to the root corner with respect to its (Riemannian) metric structure.  
The curve $\Gamma_x$ is piecewise linear with bends only at concave corners of $D$.
We say a point $x$ on the boundary $\partial D$ is \emph{left-tangential} (respectively \emph{right-tangential}) if $\Gamma_x$ starts tangentially to the boundary towards the left (respectively right), as seen from the interior of $D$.
If $x$ is a (non-root) convex corner of $D$ then we qualify the corresponding convex corner of $\mathbf{r}$ accordingly.
If $x$ is on a side of $\partial D$ and it is left/right-tangential then all points on that side are seen to be left/right-tangential, and we say that the corresponding side of $\mathbf{r}$ (i.e.\ maximal sequence of boundary edges connected by straight boundary vertices) is left/right-tangential. 
Observe that a left/right-tangential corner is adjacent to exactly one left/right-tangential side.
We let the \emph{degree} of such a corner be the length of the adjacent side (in the rigid quadrangulation).
It is not hard to see that all these properties are independent of the choice of rectilinear representative $D \in \mathsf{Type}^{-1}(\mathbf{r})$.

\begin{figure}[h]
    \centering
    \includegraphics[width=.7\linewidth]{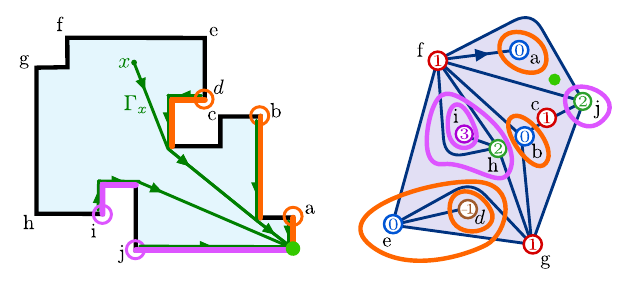}
    \caption{
    A rectilinear representative of the rigid quadrangulation of Figure~\ref{fig:rigidandlabeledquad}. The shortest curve $\Gamma_x$ to the root for a point $x$ is shown in green. The 4 right-tangential sides and 3 right-tangential convex corners are indicated in orange, and the 3 left-tangential sides and 2 left-tangential convex corners in pink. The corresponding decreasing (orange) and increasing (pink) level lines on the colorful quadrangulation are indicated on the right. Note that a local minimum is circled by a decreasing level line and a local maximum by an increasing level line.
    \label{fig:minmaxlevel}}
\end{figure}

In a colorful quadrangulation $\mathfrak{q}$, a vertex is called a \emph{local maximum} if all its neighbors have smaller label or \emph{local minimum} if all its neighbors have larger label.
A \emph{level line} on $\mathfrak{q}$ is a simple closed curve on its dual that crosses only edges with identical label pair.
Note that the union of all level lines of $\mathfrak{q}$ can be drawn in a non-intersecting fashion.
A level line is said to be \emph{increasing} (respectively \emph{decreasing}) if the root is contained on the side of the level line with lower (respectively higher) label.

\begin{theorem}\label{thm:dictionary}
    Let $\mathbf{r}\in\mathcal{R}$ and $\mathfrak{q} = \Psi(\mathbf{r})$ be drawn on the double of $\mathbf{r}$. Then the following list of correspondences applies to the pair $(\mathbf{r},\mathfrak{q})$:
    \begin{center}
    \newcounter{tableitem}
    \renewcommand{\thetableitem}{(\roman{tableitem})}
    \newcommand{\reftableitem}{\refstepcounter{tableitem}\thetableitem} 
    \begin{tabular}{r|l|l}
        & Rigid quadrangulation $\mathbf{r}$ & Colorful quadrangulation $\mathfrak{q}$ \\\hline
        \reftableitem\label{it:vertex} & non-root convex corner & vertex \\
        \reftableitem\label{it:evenedge} & row & even edge \\
        \reftableitem\label{it:oddedge} & column & odd edge \\
        \reftableitem\label{it:face} & root-corner or concave corner & face\\
        \reftableitem\label{it:rightconvex} & right-tangential convex corner of degree $\ell$ & local minimum of degree $\ell$ \\
        \reftableitem\label{it:leftconvex} & left-tangential convex corner of degree $\ell$ & local maximum of degree $\ell$ \\
        \reftableitem\label{it:rightside} & right-tangential side& decreasing level line \\
        \reftableitem\label{it:leftside} & left-tangential side & increasing level line\\\hline
    \end{tabular}
\end{center}
\end{theorem}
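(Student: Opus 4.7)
The plan is to verify the eight entries of the dictionary in order, leaning on the edge-drawing algorithm defining $\Psi$ for items \ref{it:vertex}--\ref{it:face} and on the rectilinear picture from Section~\ref{sec:rectilinear} for items \ref{it:rightconvex}--\ref{it:leftside}. Item \ref{it:vertex} is built into the definition of $\Psi$. For items \ref{it:evenedge}--\ref{it:oddedge}, one checks that a row produces an edge between two non-root convex corners whose turning numbers lie in a fixed parity class, and that a column produces the complementary parity class; this is a bookkeeping exercise tracking how the running turning number shifts whenever the edge-path reflects at a boundary side. Item \ref{it:face} is a structural statement: each face of $\mathfrak{q}$ contains exactly one root or concave corner of $\mathbf{r}$. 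The distinguished face of labels $(0,1,2,1)$ houses the root by construction; for the remainder one combines an Euler characteristic count on the double (using items \ref{it:vertex}--\ref{it:oddedge}) with a local check that the pair of rows/columns emanating from each concave corner separates it from all other root/concave corners.

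For items \ref{it:rightconvex}--\ref{it:leftside} the main tool is a rectilinear representative $D \in \mathsf{Type}^{-1}(\mathbf{r})$ together with the unfolding picture of geodesics: $\Gamma_x$ is piecewise linear and bends only at concave corners, so as $x$ sweeps a right-tangential side $s$ the family $\{\Gamma_x\}_{x\in s}$ sweeps a subregion $U_s \subset D$ bounded by $s$, by $\Gamma_a, \Gamma_b$ at the endpoints $a,b$ of $s$, and by an arc of $\partial D$ visited by these geodesics. My strategy is to prove item \ref{it:rightside} first by showing that the boundary of the double of $U_s$ crosses exactly those edges of $\mathfrak{q}$ that connect a single pair of consecutive labels, hence is a decreasing level line; that it is simple follows from the non-crossing property of geodesics together with the rigidity constraint that every ray runs from a concave corner to a straight boundary vertex. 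Item \ref{it:rightconvex} then drops out as the degenerate case in which $U_s$ encloses a single convex corner $c$ of degree $\ell$: the $\ell$ edges of $\mathfrak{q}$ incident to $c$ are in bijection with the $\ell$ boundary edges of the right-tangential side adjacent to $c$ (each lying in a unique row or column), and all these neighbours carry strictly larger labels than $c$, making it a local minimum of degree $\ell$. Items \ref{it:leftconvex} and \ref{it:leftside} follow immediately from the symmetry $\Psi \circ \mathsf{Mirror} = \mathsf{Relabel}\circ \Psi$ of \eqref{eq:Psisymmetry}, under which right/left tangentiality swap and local minima (respectively decreasing level lines) swap with local maxima (respectively increasing level lines).

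The main obstacle will be making item \ref{it:rightside} fully rigorous. Two issues complicate the picture: first, $U_s$ may itself contain nested right-tangential sides, producing a hierarchy of level lines, so one must induct (for instance on the number of concave corners inside $U_s$) to ensure each right-tangential side contributes its own simple level line without interference; second, translating between the procedural edge-drawing algorithm on the double and the global geodesic sweep requires verifying that each row or column whose transverse sweep exits through $s$ contributes one and only one edge to the prospective level line, with the correct pair of labels on either side. I expect both points to yield to a careful local analysis at concave corners---the only locations where geodesics bend and where ray-endpoints cluster---combined with the parity bookkeeping already developed for items \ref{it:evenedge}--\ref{it:oddedge}.
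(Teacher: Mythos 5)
Your reduction of items \ref{it:leftconvex} and \ref{it:leftside} to \ref{it:rightconvex} and \ref{it:rightside} via the symmetry \eqref{eq:Psisymmetry} matches the paper, and your treatment of \ref{it:vertex}--\ref{it:oddedge} is consistent with the definition of $\Psi$. But the core of your plan for \ref{it:rightside} rests on a construction that is degenerate. If $s$ is a right-tangential side, then by definition every $x\in s$ has $\Gamma_x$ starting tangentially along $s$; consequently $\Gamma_x$ runs along $s$ to the endpoint $a$ of $s$ nearest the root and then coincides with $\Gamma_a$. Since shortest curves to the root do not cross, $\Gamma_b$ contains $s\cup\Gamma_a$ for the far endpoint $b$, so the family $\{\Gamma_x\}_{x\in s}$ has one-dimensional union $s\cup\Gamma_a$: there is no two-dimensional region $U_s$ bounded by $s$, $\Gamma_a$, $\Gamma_b$ and a boundary arc whose doubled boundary could serve as the level line. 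The curve you actually need is built from the \emph{extension} of $s$ (the side together with the parallel ray from its concave endpoint, as described after the theorem statement): its double is a closed separating curve on the sphere, and the edges of $\mathfrak{q}$ it meets are those coming from the rows and columns crossing the extension transversally. Proving that these all carry the same label pair is the substantive step, and your proposal does not supply it. The paper obtains it from the explorations of Section~2: each step of the row-by-row exploration reveals exactly one horizontal side, which is right-/left-tangential precisely when the step is of type $\mathbf{R}^p_\sigma$/$\mathbf{L}^p_\sigma$ (because a shortest curve from that side must cross the base of the glued submap), while the matching peeling step $\mathfrak{R}^p_\sigma$/$\mathfrak{L}^p_\sigma$ creates a ring of quadrangles traversed by exactly one odd decreasing/increasing level line; items \ref{it:rightconvex} and \ref{it:leftconvex} are the subcase where $\sigma$ contains no $\downarrow$, with the degree read off as the length of $\sigma$. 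Without some substitute for this mechanism your induction on nested sides has nothing to run on.

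For item \ref{it:face}, your Euler-count-plus-separation plan is workable in principle, but the separation claim (that the two rays emanating from a concave corner isolate it, within its face of $\mathfrak{q}$, from every other concave corner and from the root) is precisely the nontrivial point. The paper argues instead that each non-root face of $\mathfrak{q}'$ is created by an $\mathfrak{R}$- or $\mathfrak{L}$-step and is therefore adjacent to an edge coming from a column of the corresponding submap, alongside which runs a vertical ray contained in that face; since rays never cross edges of $\mathfrak{q}'$, tracing the ray back to its concave corner combines with the count $n-2$ of faces versus root-plus-concave corners to finish. You would need to establish the same non-crossing property of rays with edges of $\mathfrak{q}$ to make your separation argument go through, at which point the two routes essentially merge.
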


\noindent
This dictionary can be further refined. 
For instance, the labels of vertices, edges, faces and level lines of $\mathfrak{q}$ can be deduced rather easily from the turning numbers in the vicinity of the corresponding elements in the rigid quadrangulation.
With regard to the structure of level lines, it is convenient to consider the \emph{extension} of a tangential side: if its endpoint furthest from the root corner is concave we take its extension to be the side itself together with the parallel ray from this concave corner, or just the side itself when the endpoint is convex.
The length of a level line then agrees with the length of the extension of the corresponding tangential side in the rigid quadrangulation.
This extension also allows to see the nesting structure of level lines in $\mathfrak{q}$: level line A separates level line B from the (center of the) face to the right of the root in $\mathfrak{q}$ if and only if the extension of the tangential side corresponding to A in $\mathbf{r}$ separates the tangential side corresponding to B from the root corner.

\subsection{Refined bijections}

The bijection $\Psi$ can be generalized to the setting of colorful quadrangulations with a boundary.
To be precise, a \emph{colorful quadrangulations of the disk} is a labeled quadrangulations of the disk in which only the inner faces are required to satisfy the colorful condition, while the not necessarily simple boundary may be arbitrarily labeled except that the root is still required to point from label $1$ to $0$.

Let us consider the partition $\mathcal{R} = \bigcup_{k\geq 1} \bigcup_{\ell_1,\ldots,\ell_k \geq 1} \mathcal{R}^{(\ell_1,\ldots,\ell_k)}$ where $\mathcal{R}^{(\ell_1,\ldots,\ell_k)}$ is the set of rigid quadrangulations in which the boundary between the root corner and the next convex corner encountered in clockwise direction consists of $k$ sides of length $\ell_1,\ldots,\ell_k$ separated by $k-1$ concave corners (see Figure~\ref{fig:boundarybijection} for an example with $k=3$).
We call this portion of the boundary of length $\ell\coloneqq \ell_1+\cdots+\ell_k$ the \emph{$k$-fold base} of the rigid quadrangulation, or simply the \emph{base} if the context is clear.
Let $\Psi^{\mathrm{b}}(\mathbf{r})$ be the rooted labeled map $\Psi(\mathbf{r})$ from which all edges are deleted that correspond to columns or rows that end on the base of $\mathbf{r}$ as well as the vertex corresponding to the endpoint of the base.
All deleted edges were adjacent to a vertex of label $2$ or higher, so the root edge is not affected.
Let further $\mathsf{Walk}^{(\ell_1,\ldots,\ell_k)} \in \Z^{2\ell}$ be the walk on the integers defined as follows. 
The walk starts with $\ell_1$ iterations of $(0,1)$, followed by $\ell_3$ iterations of $(2,3)$ and so on, meaning that it has $\ell_i$ iterations of $(i-1,i)$ for all odd indices $i \leq k$.  
This is followed by $\ell_i$ iterations of $(i,i-1)$ for all even indices $i \leq k$ but now in descending order.
Note that $\mathsf{Walk}^{(\ell_1,\ldots,\ell_k)}$ is a walk with $\pm1$ steps starting at $0$ and ending at $1$.
Moreover, the corresponding walk on $\Z+\tfrac12$ given by the midpoints of the steps of $\mathsf{Walk}^{(\ell_1,\ldots,\ell_k)}$ is \emph{unimodal}, meaning that it increases weakly up to a peak and then decreases weakly.
Denote by $\mathcal{C}^{(\ell_1,\ldots,\ell_k)}$ the set of colorful quadrangulations of the disk of perimeter $2(\ell_1+\cdots+\ell_k)$ satisfying the \emph{unimodal boundary condition} that the labels in clockwise order around the boundary are given by $\mathsf{Walk}^{(\ell_1,\ldots,\ell_k)}$, starting and ending at the endpoints of the root.

\begin{theorem}\label{thm:basebijection}
 For each $k \geq 1$ and $\ell_1,\ldots,\ell_k \geq 1$, the mapping $\Psi^{\mathrm{b}}$ restricts to a bijection
 \begin{align*}
 \mathcal{R}^{(\ell_1,\ldots,\ell_k)} \to \mathcal{C}^{(\ell_1,\ldots,\ell_k)}.
 \end{align*}
\end{theorem}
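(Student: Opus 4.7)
The plan is to leverage Theorem~\ref{thm:rootedbijection} and show that $\Psi^{\mathrm{b}}$ corresponds to a canonical local deletion on $\mathcal{C}$ that admits a unique inverse closure from $\mathcal{C}^{(\ell_1,\ldots,\ell_k)}$.

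First, given $\mathbf{r} \in \mathcal{R}^{(\ell_1,\ldots,\ell_k)}$, I would identify the vertex $c^*$ of $\mathfrak{q} = \Psi(\mathbf{r})$ corresponding to the endpoint of the base. A turning-number count along the CCW path from the root to $c^*$ around the non-base boundary yields $\tau(c^*) = (n-1)-(n-k-2) = k+1$, since this path encounters $n-1$ non-root, non-$c^*$ convex corners and $n-k-2$ non-base concave corners. Using the dictionary of Theorem~\ref{thm:dictionary}, I identify a \emph{base cluster} in $\mathfrak{q}$ consisting of $c^*$, the $k$ faces associated via Theorem~\ref{thm:dictionary}\ref{it:face} to the root corner and the $k-1$ base-concave corners, and the $\ell-k$ edges associated via \ref{it:evenedge}--\ref{it:oddedge} to rows/columns ending at straight vertices on the base. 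The deletion prescribed by $\Psi^{\mathrm{b}}$ removes this cluster (the explicit $\ell-k$ edges together with all remaining edges incident to $c^*$), merging the $k$ base-cluster faces into a single outer face of a disk. A careful trace of the labels clockwise around this outer face starting from the root endpoint should yield $\mathsf{Walk}^{(\ell_1,\ldots,\ell_k)}$: each side of length $\ell_i$ contributes an oscillation block of $2\ell_i$ labels at level $(i-1,i)$, and the $k-1$ transitions between blocks are produced by the $k-1$ base-concave corners.

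For the inverse, given $\mathfrak{m} \in \mathcal{C}^{(\ell_1,\ldots,\ell_k)}$, I would close the outer face by adding a new vertex $v^*$ of label $k+1$ together with a specific set of edges subdividing the outer region into quadrangular faces of colorful type $(r-1,r,r+1,r)$. The colorful condition, combined with the unimodality of the boundary walk, jointly force a unique closure: the labels at each newly-created quadrangle are fully determined by the block structure of the walk (which fixes the value of $r$ locally), so the placement and labeling of the added edges and faces are forced. Applying $\Psi^{-1}$ to the resulting $\mathfrak{q} \in \mathcal{C}$ then gives the required preimage in $\mathcal{R}^{(\ell_1,\ldots,\ell_k)}$, and invertibility follows.

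The main obstacle will be the precise combinatorial identification of the base cluster in $\mathfrak{q}$ together with verification of the closure's uniqueness, particularly matching up the edges incident to $c^*$ with the $\ell-k$ rows/columns explicitly deleted. A clean approach is induction on $k$: the base case $k=1$ is transparent, since all faces around $c^*$ are of type $(0,1,2,1)$, the walk reads $0,1,\ldots,0,1$, and the closure adds $v^*=2$ with fan edges to all level-$1$ boundary vertices; the inductive step then analyzes how inserting one additional base-concave corner between two consecutive sides modifies the cluster by attaching one new face and extends the walk by one additional oscillation block at a higher level.
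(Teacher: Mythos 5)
Your overall strategy---delete a cluster of edges and faces around the special vertex $c^*$ of label $k+1$ and recover it by a canonical closure---is in the same spirit as the paper's proof (Section~\ref{sec:ascentpath}), which characterizes $\Psi(\mathcal{R}^{(\ell_1,\ldots,\ell_k)})$ as the colorful quadrangulations containing the one-holed submap $\mathfrak{w}^{(\ell_1,\ldots,\ell_k)}$, identified via an \emph{ascent path} on the double, and your computation $\tau(c^*)=k+1$ is correct. But there are genuine gaps. First, the ``base cluster'' is miscounted: $\Psi^{\mathrm{b}}$ deletes the $\ell=\ell_1+\cdots+\ell_k$ edges corresponding to \emph{all} rows and columns ending on the base (one per base edge), of which only the last $\ell_k$ are incident to $c^*$; their removal merges $\ell$ faces of $\mathfrak{q}$ into the outer face, not the $k$ faces associated to the root corner and the base-concave corners---the remaining $\ell-k$ merged faces correspond to concave corners \emph{off} the base. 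Second, the assertion that the residual boundary reads $\mathsf{Walk}^{(\ell_1,\ldots,\ell_k)}$ is precisely the hard step, and your description of it is incorrect: the blocks of length $2\ell_i$ do not occur in the order $i=1,\ldots,k$ with transitions at the base-concave corners, but in the reordered sequence ``odd $i$ ascending, then even $i$ descending'', a consequence of the alternation between front and back of the double that the paper establishes in Lemma~\ref{lem:ascentcrossing}. Your sketch contains no mechanism that would produce this reordering.

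Third, the uniqueness of the closure is not only unproven but false as stated: already for $k=2$, $\ell_1=\ell_2=1$ the outer face has labels $(0,1,2,1)$, and it can be closed by a dangling label-$3$ vertex attached to the label-$2$ corner together with a new $(1,2)$-edge drawn to either of the two label-$1$ boundary vertices; both choices yield colorful quadrangulations, so colorfulness plus unimodality do not force the closure. More importantly, uniqueness of \emph{a} closure is not what the argument needs: one must show that the canonical closure applied to $\Psi^{\mathrm{b}}(\mathbf{r})$ recovers $\Psi(\mathbf{r})$ itself, i.e.\ that the deleted structure of $\Psi(\mathbf{r})$ is exactly $\mathfrak{w}^{(\ell_1,\ldots,\ell_k)}$ positioned at the root face. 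That is the content of Proposition~\ref{prop:ascentsubmap}, whose proof requires tracing where the edges of the base rows and columns actually travel on the double---the step your proposal defers to ``a careful trace''. The concluding induction on $k$ does not repair this, since passing from $k$ to $k+1$ reorders the blocks of the walk globally rather than modifying it locally. Without an argument of the type of Lemma~\ref{lem:ascentcrossing}, the proposal does not close.
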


\begin{figure}[t]
    \centering
    \includegraphics[width=.7\linewidth]{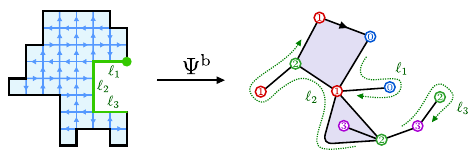}
    \caption{Example of a rigid quadrangulation $\mathbf{r} \in \mathcal{R}^{(2,3,2)}$ and the corresponding colorful quadrangulation of the disk $\Psi^{\mathrm{b}}(\mathbf{r})\in \mathcal{C}^{(2,3,2)}$. The labels in clockwise order around the boundary are $\mathsf{Walk}^{(2,3,2)} = ({\protect\underbrace{0,1,0,1}_{\ell_1}},\protect\underbrace{2,3,2,3}_{\ell_3},\protect\underbrace{2,1,2,1,2,1}_{\ell_2})$.\label{fig:boundarybijection}}
\end{figure}

Most of this work is devoted to proving this theorem for $k=1$. 
Then Theorem~\ref{thm:rootedbijection} will be interpreted as a special case of this theorem for $k=1$ and $\ell_1=1$. (Actually it is more natural to think of Theorem~\ref{thm:rootedbijection} as the special case $k=2$ and $\ell_1=\ell_2=1$, since $\mathcal{C}^{(1,1)} = \mathcal{C}$ by definition. However, that does not play well with the logic of the proof.)
As explained in Section~\ref{sec:zipping}, this follows from the fact that one can ``expand'' a rigid quadrangulation into a base-1 rigid quadrangulations and one can ``zip'' the boundary of perimeter-2 colorful quadrangulation.
Finally, Theorem~\ref{thm:basebijection} for arbitrary $k\geq 1$ will be deduced from Theorem~\ref{thm:rootedbijection} by inspection.

We mention here two further natural specializations of $\Psi^{\mathrm{b}}$.
For $p,q \ge 1$, we denote by $\mathcal{R}_{\mathrm{B}}^{(p)(q)} \subset \mathcal{R}^{(p)} \subset \mathcal{R}$ the \emph{B-type} rigid quadrangulations for which both sides adjacent to the root end in convex corners and are of length $p$ (horizontal side) and $q$ (vertical side) respectively.
We can further restrict to the \emph{C-type} rigid quadrangulations $\mathcal{R}_{\mathrm{C}}^{(p)(q)} \subset \mathcal{R}_{\mathrm{B}}^{(p)(q)}$ that fully contain a rectangle of size $p\times q$ with a corner at the root. 
On the other hand, let $\mathcal{C}^{(p)(q)}_{\mathrm{B}} \subset \mathcal{C}^{(p)}$ be the \emph{B-type} colorful quadrangulations of the disk of perimeter $2p$ with labels $0,1,0,1,\ldots$ for which the endpoint of the root edge with label $0$, denoted $v_0$, has degree $q$ and $v_0$ is required to only have neighbors of label $1$.
Moreover, the \emph{C-type} colorful quadrangulations $\mathcal{C}^{(p)(q)}_{\mathrm{C}} \subset \mathcal{C}^{(p)(q)}_{\mathrm{B}}$ are defined by further restricting $v_0$ to be only incident once to the root face.
\begin{figure}[t]
    \centering
    \includegraphics[width=.6\linewidth]{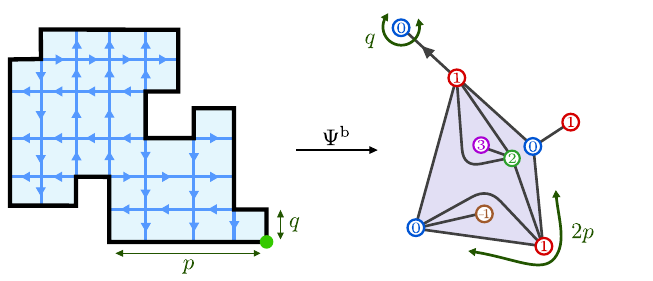}
    \caption{Example of a B-type rigid quadrangulation $\mathbf{r}\in \mathcal{R}_{\mathrm{B}}^{(p)(q)}$ for $p=4$, $q=1$ and its image under $\Psi^{\mathrm{b}}$. In this case $\mathbf{r}\in \mathcal{R}_{\mathrm{C}}^{(p)(q)}$ if of type C as well, because it contains a rectangle of size $4\times 1$ (corresponding to the bottom row).
    \label{fig:doublebase}}
\end{figure}
\begin{corollary}\label{cor:doublebase}
    For each $p,q \geq 1$, the mapping $\Psi^{\mathrm{b}}$ restricts to bijections
    \begin{align*}
        \mathcal{R}_{\mathrm{B}}^{(p)(q)} \to \mathcal{C}_{\mathrm{B}}^{(p)(q)}\qquad \text{and} \qquad \mathcal{R}_{\mathrm{C}}^{(p)(q)} \to \mathcal{C}_{\mathrm{C}}^{(p)(q)}.
    \end{align*}
\end{corollary}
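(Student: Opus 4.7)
The plan is to deduce Corollary~\ref{cor:doublebase} from Theorem~\ref{thm:basebijection} with $k=1$ and $\ell_1=p$, which already provides the bijection $\Psi^{\mathrm{b}}: \mathcal{R}^{(p)} \to \mathcal{C}^{(p)}$. Since $\mathcal{R}_{\mathrm{B}}^{(p)(q)} \subset \mathcal{R}^{(p)}$ and $\mathcal{R}_{\mathrm{C}}^{(p)(q)} \subset \mathcal{R}_{\mathrm{B}}^{(p)(q)}$, with analogous inclusions on the colorful side, it suffices to check that $\Psi^{\mathrm{b}}$ exchanges the extra conditions in both cases. The natural tool is the dictionary of Theorem~\ref{thm:dictionary}, applied in the neighborhood of the root corner of $\mathbf{r}$ and at the vertex $v_0$ of $\mathfrak{q} = \Psi^{\mathrm{b}}(\mathbf{r})$.

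For the B-type correspondence, let $s$ denote the side of $\mathbf{r}$ adjacent to the root corner in the counterclockwise direction. The hypothesis $\mathbf{r} \in \mathcal{R}_{\mathrm{B}}^{(p)(q)}$ requires $s$ to have length $q$ and to end at a convex corner, call it $c_2$. A side contains no concave corners, so $\tau(c_2)=0$, and inspecting the distinguished face of $\Psi(\mathbf{r})$ (the face containing the root corner of $\mathbf{r}$, with vertex labels $(0,1,2,1)$) identifies $c_2$ with its label-$0$ vertex, which is precisely $v_0$. The shortest curve from $c_2$ to the root runs tangentially along $s$, and a check of the orientation conventions shows that this tangential motion goes to the right as seen from the interior of $\mathbf{r}$. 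By item~\ref{it:rightconvex} of Theorem~\ref{thm:dictionary}, $v_0$ is therefore a local minimum in $\mathfrak{q}$ whose degree equals the length of $s$, namely $q$: all $q$ neighbors of $v_0$ carry label $1$, so $\mathfrak{q} \in \mathcal{C}_{\mathrm{B}}^{(p)(q)}$. The converse reads the same dictionary entry backwards: if $v_0$ has degree $q$ with only neighbors of label $1$, then $v_0$ is a local minimum, its preimage is a right-tangential convex corner of degree $q$, and the structure of the distinguished face forces the adjacent right-tangential side to be the side of $\mathbf{r}$ counterclockwise of the root, giving $\mathbf{r} \in \mathcal{R}_{\mathrm{B}}^{(p)(q)}$.

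For the C-type correspondence the additional condition to match is that $\mathbf{r}$ contains a $p \times q$ rectangle at the root, versus $v_0$ being incident only once to the root face of $\mathfrak{q}$. Via the rectilinear interpretation of Section~\ref{sec:rectilinear}, rectangle-containment is equivalent to the $q$ rows bordering $s$ each extending horizontally for at least $p$ units before encountering any concave corner. Under $\Psi^{\mathrm{b}}$ these $q$ rows account, together with the root edge, for the $q$ edges of $\mathfrak{q}$ incident to $v_0$ (cf.\ item~\ref{it:evenedge} of Theorem~\ref{thm:dictionary}). I plan to show that the portion of the boundary walk of the root face of $\mathfrak{q}$ between two consecutive edges at $v_0$ traces exactly the portion of the boundary of $\mathbf{r}$ between the corresponding two rows; when the rectangle is fully contained, this portion lies on the base and never returns to $v_0$, whereas any obstruction by a concave corner inside the prospective rectangle reroutes the boundary walk back through $v_0$ and produces an extra incidence with the root face.

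The main obstacle is making this last equivalence precise. Unlike the B-type statement, which reduces cleanly to an already-established dictionary entry, the C-type statement depends on a local analysis of the edge-drawing rule near the root corner and on tracking how the base-related deletions performed by $\Psi^{\mathrm{b}}$ interact with the prospective rectangle. A natural way to organize the argument is by induction on the smallest-height row that obstructs the rectangle, or equivalently by exhibiting a direct matching between concave corners inside the rectangle and extra appearances of $v_0$ on the boundary of $\mathfrak{q}$. In either form, the heart of the proof is a careful local inspection of $\Psi$ in the vicinity of the root corner.
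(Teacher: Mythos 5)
Your B-type argument takes a genuinely different route from the paper: the paper characterizes membership in $\mathcal{R}_{\mathrm{B}}^{(p)(q)}$ and $\mathcal{C}_{\mathrm{B}}^{(p)(q)}$ by the types of the first $q$ steps of the row-by-row and peeling explorations and matches them via Theorem~\ref{thm:bijection}, whereas you invoke items \ref{it:vertex} and \ref{it:rightconvex} of the dictionary. The forward direction of your argument is essentially sound (modulo two points you should make explicit: that no edge deleted in passing from $\Psi(\mathbf{r})$ to $\Psi^{\mathrm{b}}(\mathbf{r})$ is incident to $v_0$, since deleted edges touch a label-$2$ vertex, so the degree of $v_0$ is unchanged; and that $v_0$ is indeed the corner reached by extending the right end of the first row upward, which is what ties $v_0$ to $c_2$). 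The converse is under-argued: knowing that $v_0$ is a local minimum of degree $q$ tells you via the dictionary that \emph{some} convex corner is right-tangential of degree $q$, but you still must rule out that this corner is a corner other than the endpoint of the vertical side adjacent to the root (e.g.\ when that side ends in a concave corner and the upward path from the first row continues past it). "The structure of the distinguished face forces" this is an assertion, not an argument; the paper avoids the issue entirely because the exploration characterization is manifestly an if-and-only-if at every step.

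The more serious gap is the C-type equivalence, which you explicitly leave as a plan ("I plan to show\ldots", "a natural way to organize the argument is\ldots"). The statement you need --- $v_0$ is incident only once to the root face of $\Psi^{\mathrm{b}}(\mathbf{r})$ if and only if $\mathbf{r}$ contains the full $p\times q$ rectangle at the root --- is exactly where the content of the corollary lies, and your sketch of tracing the boundary walk of the root face between consecutive edges at $v_0$ is not carried out. The paper closes this gap with machinery you already have available: by Proposition~\ref{prop:extendbijequivalence}, the label-$0$ boundary vertices of $\Psi^{\mathrm{b}}(\mathbf{r})$ are $\vec{v}_1(\mathfrak{q}),\ldots,\vec{v}_p(\mathfrak{q})$, the endpoints of the paths $\vec{\gamma}_1(\mathbf{r}),\ldots,\vec{\gamma}_p(\mathbf{r})$ launched from the $p$ base edges, so single incidence of $v_0$ to the root face is equivalent to $\vec{\gamma}_i(\mathbf{r})$ not ending on $c$ for $i=2,\ldots,p$; and each $\vec{\gamma}_i$ either exits the prospective rectangle upward (when the rectangle is fully present) or is deflected to the right by a concave corner inside it and then terminates at $c$. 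I would recommend replacing your boundary-walk plan by this argument, which turns the "careful local inspection" you defer into a one-line consequence of a proposition already proved.
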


\subsection{Multivariate generating functions}
Up to minor modifications, the colorful quadrangulations of the disk $\mathcal{C}^{(p)}$, the B-type colorful quadrangulations $\mathcal{C}_{\mathrm{B}}^{(p)(q)}$ and the C-type colorful quadrangulations $\mathcal{C}_{\mathrm{C}}^{(p)(q)}$ are precisely the \emph{P-patches}, \emph{B-patches} and \emph{C-patches} of \cite[Section~6]{Bousquet-Melou_generating_2020}, while $\mathcal{C}^{(p,q)}$ corresponds to the \emph{E-patches} of \cite[Definition~2.8]{Bousquet-Melou_Refined_2025} restricted to the colorful case ($v=1$ and $\omega=0$ in their notation).
The only differences are that the orientation of the root edge is reversed in our definitions and that we do not consider the cases $p=0$ or $q=0$.
Combining Theorem~\ref{thm:basebijection} and Corollary~\ref{cor:doublebase} with the explicit generating functions for colorful quadrangulations obtained by Bousquet-M\'elou and Elvey Price in \cite[Theorem~7.1]{Bousquet-Melou_generating_2020} leads to the following.
\begin{corollary}\label{cor:multivariate}
    Writing $n(\mathbf{r})$ for the number of non-root convex corners of a rigid quadrangulation $\mathbf{r}$, the formal multivariate generating functions of rigid quadrangulations in $\mathcal{R}^{(p)}$, $\mathcal{R}^{(p)(q)}_{\mathrm{B}}$, $\mathcal{R}^{(p)(q)}_{\mathrm{C}}$ and $\mathcal{R}^{(p,q)}$ are given by 
    \begin{align*}
        P(t,y) &= \sum_{p \geq 1} y^p \,\,\,\sum_{\mathbf{r}\in\mathcal{R}^{(p)}} t^{n(\mathbf{r})-1} \,\,\,\,\,\,\,\,\,= \sum_{p\geq 1} y^p \sum_{n\geq p} \frac{1}{n+1}\binom{2n}{n}\binom{2n-p}{n} R(t)^{n+1},\\
        B(t,x,y) &= \sum_{p,q \geq 1} y^{p} x^{q} \!\sum_{\mathbf{r}\in\mathcal{R}^{(p)(q)}_{\mathrm{B}}} t^{n(\mathbf{r})-2} = \exp\left( \sum_{n\geq 0} \sum_{0\leq i,j\leq n} \frac{1}{n+1}\binom{2n-i}{n}\binom{2n-j}{n} x^{i+1}y^{j+1} R(t)^{n+1}\right)-1,\\
        C(t,x,y) &= \sum_{p,q \geq 1} y^{p} x^{q} \sum_{\mathbf{r}\in\mathcal{R}^{(p)(q)}_{\mathrm{C}}} t^{n(\mathbf{r})-2} = 1- \frac{1}{1+B(t,x,y)},\\
        E(t,x,y) &= \sum_{p,q \geq 1} y^{p} x^{q} \sum_{\mathbf{r}\in\mathcal{R}^{(p,q)}} t^{n(\mathbf{r})-2} \,\,\,= \frac{1}{x y} C(t,x,y) - P(t,x)-P(t,y) - t,
    \end{align*}
    where $R(t)$ is the same power series as in Theorem~\ref{thm:rigidquadenum}.
\end{corollary}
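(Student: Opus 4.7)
The plan is to combine the bijections of Theorem~\ref{thm:basebijection} and Corollary~\ref{cor:doublebase} with the multivariate generating functions for colorful patches obtained by Bousquet-M\'elou and Elvey Price, after verifying that our statistics match their variable conventions. As noted in the discussion preceding the corollary, $\mathcal{C}^{(p)}$, $\mathcal{C}_{\mathrm{B}}^{(p)(q)}$, $\mathcal{C}_{\mathrm{C}}^{(p)(q)}$ and $\mathcal{C}^{(p,q)}$ coincide---up to a trivial reorientation of the root edge and the exclusion of the $p=0$ or $q=0$ degenerate cases---with the colorful specialization $v=1$, $\omega=0$ of the P-, B-, C- and E-patches of \cite[Section~6]{Bousquet-Melou_generating_2020} (resp.\ \cite[Definition~2.8]{Bousquet-Melou_Refined_2025}). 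The corresponding generating-function formulas on the patch side are then read off from \cite[Theorem~7.1]{Bousquet-Melou_generating_2020}; the power series $R(t)$ arising there is characterised by the same implicit equation as in Theorem~\ref{thm:rigidquadenum}, so no relabelling of the base variable is required.

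The remaining work is to match the weights. Under $\Psi^{\mathrm{b}}$, non-root convex corners of $\mathbf{r}$ are in bijection with the vertices of $\Psi(\mathbf{r})$ by item~\ref{it:vertex} of Theorem~\ref{thm:dictionary}, and exactly one such corner---the far endpoint of the base---is deleted together with its incident edges. Hence $\Psi^{\mathrm{b}}(\mathbf{r}) \in \mathcal{C}^{(p)}$ has $n(\mathbf{r})-1$ vertices, giving the weight $t^{n(\mathbf{r})-1}$ in $P(t,y)$. For the B- and C-type restrictions, the endpoint of the side adjacent to the root opposite the base provides an additional distinguished convex corner, which under $\Psi^{\mathrm{b}}$ becomes the marked vertex $v_0$ of the B/C-patch; since $v_0$ is not counted by BM-EP's vertex-variable, this accounts for the shift from $t^{n(\mathbf{r})-1}$ to $t^{n(\mathbf{r})-2}$. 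Tracking the horizontal side length $p$ by $y$ and the vertical side length $q$ by $x$ then matches BM-EP's variables, yielding the stated formulas for $P(t,y)$ and $B(t,x,y)$ directly.

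For the C-series, the identity $C = 1 - (1+B)^{-1}$, equivalently $B = C/(1-C)$, is the standard consequence of the bijective decomposition of a B-patch around $v_0$ into a non-empty ordered sequence of C-patches, each being a maximal piece in which $v_0$ touches the root face only once. The E-series formula follows from a boundary identification relating E-patches to C-patches with a shift of $1$ in each boundary parameter (accounting for the factor $(xy)^{-1}$), after subtracting the degenerate configurations in which the resulting map collapses onto one of its two sides (contributing $P(t,x)$ and $P(t,y)$) or to the trivial one-edge map (contributing $t$).

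The principal obstacle is bookkeeping: pinning down the precise correspondence between BM-EP's conventions (root orientation, marked boundary vertices, inclusion of degenerate patches) and ours, and confirming that the variable shifts ($t^{n(\mathbf{r})-1}$ versus $t^{n(\mathbf{r})-2}$, and $x^{i+1}y^{j+1}$ versus $x^i y^j$) are consistent across all four families. Once the conventions are aligned on a small example---say $p=q=1$ for $B$---the rest is a direct specialization of \cite[Theorem~7.1]{Bousquet-Melou_generating_2020}.
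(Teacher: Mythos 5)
Your proposal is correct and follows essentially the same route as the paper: import \cite[Theorem~7.1]{Bousquet-Melou_generating_2020} (and \cite[Proposition~3.9]{Bousquet-Melou_Refined_2025} for the E-series) through Theorem~\ref{thm:basebijection} and Corollary~\ref{cor:doublebase}, with the same bookkeeping of the vertex weight ($n(\mathbf{r})-1$ for P, $n(\mathbf{r})-2$ for B/C/E via the unweighted root vertex $v_0$), the same sequence-of-C-patches identity $B=C/(1-C)$, and the same inclusion--exclusion of degenerate boundary cases for $E$. The only detail the paper makes explicit that you leave implicit is the identity $[x^1]\mathsf{C}(t,x,y)=y\,\mathsf{P}(t,y)$ used to recognize the subtracted degenerate terms as $P(t,x)+P(t,y)+t$.
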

\noindent
The first few terms in these power series are 
\begin{align*}
P(t,x)& = (t^2 + 2t^3 + 10t^4 + \cdots)\,x + (2t^3 + 8 t^4 + 50 t^5 \cdots)\,x^2 + \cdots\\
B(t,x,y) &= t x y + (t^2 + 2t^3 + 10 t^4 + \cdots)(x y^2+x^2 y) + (t^2 +t^3 + 5 t^4 + \cdots)\,x^2y^2+\cdots\\
C(t,x,y) &= t x y + (t^2 + 2t^3 + 10 t^4+ \cdots)(x y^2+x^2 y) + (t^3 + 5 t^4 + \cdots)\,x^2y^2+\cdots\\
E(t,x,y) &= (t^3 + 5t^4 + 33 t^5 + \cdots)\,x y + (2t^4 + 15 t^5 + \cdots)(xy^2+x^2 y) + \cdots.
\end{align*}

Bousquet-M\'elou and Elvey Price established the enumeration in \cite[Theorem~7.1]{Bousquet-Melou_Eulerian_2020} by considering several decompositions of colorful quadrangulations that give rise to a system of equations for the generating functions of P-, B-, and C-patches.
It is a 2-catalytic system of equations, in the sense that it involves two additional generating variables $x$ and $y$ besides $t$, that they show admits unique solutions.  
We will demonstrate in Section~\ref{sec:catalyticexplanation} their 2-catalytic decompositions of P-, B-, and C-patches have natural counterparts at the level of rigid quadrangulations.
In a very recent work, Bousquet-M\'elou and Elvey Price obtained also a 1-catalytic equation for P-patches \cite[Proposition~9.3]{Bousquet-Melou_Refined_2025}.
This one happens to correspond exactly to the Tutte-like decomposition at the first step in the peeling exploration of the colorful quadrangulation (Section~\ref{sec:peeling}), or equivalently the first step in the row-by-row exploration of the rigid quadrangulation (Section~\ref{sec:rowbyrow}). 

Let us also highlight the concurrent work of Zonneveld \cite{Zonneveld_tree_2025} that provides an alternative bijective proof of the generating functions in Theorem~\ref{thm:rigidquadenum} and  Corollary~\ref{cor:multivariate} via a bijection with certain labeled trees called \emph{H-trees}.
These H-trees arise from a variant of the row-by-row exploration described in Section~\ref{sec:rowbyrow}, tracking the number of downward rays seen throughout the exploration.
One may interpret the composition of Zonneveld's bijection with the bijection of Theorem~\ref{thm:basebijection} as a tree bijection for colorful quadrangulations, thus providing a bijective proof for the enumeration results of \cite[Theorem~7.1]{Bousquet-Melou_Eulerian_2020}.  

\subsection{Specialization: rigid fighting fish}

A related model of self-overlapping polygons in the square lattice that has received considerable attention in the combinatorial literature is that of \emph{fighting fish} \cite{Duchi_Fighting_2017,Duchi_Fighting_2017a}.
To introduce this family in our language, let us consider a rooted flat quadrangulation $\mathbf{f}$ of the disk as defined in Section~\ref{sec:rigiddef}.
The notion of turning number $\tau(c)$ of a non-root convex corner $c$ can be generalized from the rigid case to general flat quadrangulations by setting $\tau(c) = \sum_v 3-\deg(v)$, where the sum is over all boundary vertices $v$ encountered in counterclockwise order strictly between the root and $c$. 
Then one may check that $\mathbf{f}$ is a fighting fish in the sense of \cite{Duchi_Fighting_2017} if and only if all non-root convex corners have turning number $0$, $1$ or $2$.
Let $\mathcal{FF}_n$ be the set of fighting fish of perimeter $2n+2$, and $\mathcal{FF} = \bigcup_{n \geq 1} \mathcal{FF}_n$.
It was shown in \cite[Theorem~1]{Duchi_Fighting_2017} that
\begin{align}
    |\mathcal{FF}_n| = \frac{2}{(n+1)(2n+1)}\binom{3n}{n},
\end{align}
see OEIS sequence \cite[A000139]{oeis}.
Bijective proofs of this enumeration were obtained via bijections with two-stack sortable permutations \cite{Fang_Fighting_2018}, Tamari intervals \cite{Duchi_bijection_2023} and non-separable rooted planar maps \cite{Duchi_bijection_2022}.

\begin{figure}[h]
    \centering
    \includegraphics[width=.4\linewidth]{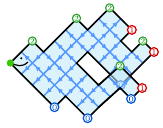}
    \caption{An example of a rigid fighting fish.\label{fig:rigidfightingfish}}
\end{figure}

In the context of our work it is then natural to consider \emph{rigid fighting fish} which are the fighting fish that are also rigid quadrangulation, and we denote the set of these by $\mathcal{RFF} = \mathcal{FF}\cap\mathcal{R}$.
According to Theorem~\ref{thm:rootedbijection}, $\Psi$ restricts to a bijection between $\mathcal{RFF}$ and colorful quadrangulations in which all vertices have label $0$, $1$ or $2$.
This bijection can be naturally composed with Tutte's bijection: drawing the diagonal that connects the vertices with label $1$ in each face and deleting all original edges (and properly transferring the root), we obtain a bijection with rooted Eulerian planar maps (i.e.\ rooted planar maps with all vertices of even degree).
The enumeration of these goes back to Tutte \cite{Tutte_census_1963}, see OEIS sequence \cite[A000257]{oeis}.

\begin{corollary}
    Composing $\Psi|_{\mathcal{RFF}}$ with Tutte's bijection gives a bijection between rigid fighting fish of perimeter $4 n$ (or equivalently $n+3$ convex corners) and rooted Eulerian planar maps with $n$ edges. In particular,
    \begin{align*}
        |\mathcal{RFF}_{2n-1}| = \frac{3\cdot2^{n-1}}{(n+1)(n+2)}\binom{2n}{n}.
    \end{align*}
\end{corollary}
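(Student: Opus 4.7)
The plan is to combine the restriction of $\Psi$ to $\mathcal{RFF}$ with the diagonal step of Tutte's classical bijection, then invoke Tutte's enumeration of rooted Eulerian planar maps. The argument organizes into three tasks: (i) verify that the composition lands in rooted Eulerian maps; (ii) check bijectivity, in particular matching the root conventions; (iii) count edges and invoke the known formula.

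As noted in the paragraph preceding the corollary, Theorem~\ref{thm:rootedbijection} already implies that $\Psi$ restricts to a bijection from $\mathcal{RFF}$ onto the colorful quadrangulations whose vertex labels lie in $\{0,1,2\}$; since each face carries labels $(r-1,r,r+1,r)$ cyclically, this forces $r=1$ in every face, so every face has labels $(0,1,2,1)$. For task~(i) I would apply the diagonal step: in each face draw the diagonal joining its two label-$1$ vertices, delete all original edges, and transfer the root in the standard way to obtain a rooted planar map $M$ whose vertices are the label-$1$ vertices of $\mathfrak{q}$ and whose edges are indexed by faces of $\mathfrak{q}$. The key local observation is that around any label-$1$ vertex $v$ of $\mathfrak{q}$, each corner of $v$ belongs to a face with cyclic labels $(0,1,2,1)$ in which $v$ occupies the label-$1$ corner, so the two edges of that face incident to $v$ are one $01$-edge and one $12$-edge. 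Hence the edges around $v$ alternate strictly between $01$-type and $12$-type, forcing $\deg_{\mathfrak{q}}(v)$ to be even. Since $\deg_M(v)=\deg_{\mathfrak{q}}(v)$, the map $M$ is Eulerian.

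For task~(ii) the inverse direction mirrors this: given a rooted Eulerian map $M$, the inverse Tutte construction yields a bipartite quadrangulation $Q(M)$ in which each original vertex of $M$ is labeled $1$; it remains to assign a label in $\{0,2\}$ to each face-vertex of $Q(M)$ so that the two face-vertices in each face of $Q(M)$ receive opposite labels. This is equivalent to a proper $2$-coloring of the faces of $M$, which exists precisely because $M$ is Eulerian, and the rooting of $M$ selects the unique such coloring making the face right of the root of $\mathfrak{q}$ read $(0,1,2,1)$. The most delicate point of the whole argument is the careful reconciliation of the root conventions of $\Psi$ (an oriented edge from label $1$ to $0$ with face $(0,1,2,1)$ on the right) and of Tutte's bijection—this is where I expect the main technical obstacle.

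For task~(iii), a rigid fighting fish in $\mathcal{RFF}_{2n-1} = \mathcal{RFF}\cap\mathcal{R}_{n+2}$ has $n+3$ convex and $n-1$ concave corners, so by item~\ref{it:face} of Theorem~\ref{thm:dictionary} the colorful quadrangulation $\mathfrak{q}=\Psi(\mathbf{r})$ has $(n-1)+1=n$ faces, which the diagonal step turns into the $n$ edges of $M$. Consequently $|\mathcal{RFF}_{2n-1}|$ equals the number of rooted Eulerian planar maps with $n$ edges, and Tutte's classical enumeration \cite{Tutte_census_1963} delivers the closed formula $\frac{3\cdot 2^{n-1}}{(n+1)(n+2)}\binom{2n}{n}$. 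The alternation argument in~(i) and the edge count in~(iii) are essentially immediate from the dictionary, so the real work lies in task~(ii).
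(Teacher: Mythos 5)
Your proposal is correct and follows essentially the same route as the paper, which justifies the corollary by restricting $\Psi$ to $\mathcal{RFF}$ (landing in colorful quadrangulations with labels in $\{0,1,2\}$, hence all faces $(0,1,2,1)$) and composing with Tutte's diagonal bijection, identifying Eulerian-ness with the existence of a proper $2$-face-coloring and invoking Tutte's count of rooted Eulerian maps (OEIS A000257). Your parity/alternation argument for even degrees, the corner/face/edge bookkeeping giving $n$ edges for $\mathcal{RFF}_{2n-1}$, and the root-transfer caveat all match the paper's (brief) justification.
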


It is natural to ask whether this bijection can be understood as a specialization of the bijection of Duchi and Henriet between fighting fishing and non-separable rooted planar Eulerian maps described in \cite{Duchi_bijection_2022}.

\subsection{Large random rigid quadrangulations and flat JT gravity}\label{sec:jt}

This work was inspired by questions of Ferrari in the context of JT gravity \cite{ferrari2024randomdisksconstantcurvature,ferrari2025jackiw} and answers several of them albeit in a slightly tweaked version of the model proposed in \cite{ferrari2025jackiw}.
Let us explain the relation and make explicit which answer can be extracted from this work.
In the language of \cite{ferrari2025jackiw} a \emph{self-overlapping polygon} (SOP) is a closed piecewise-linear path $\gamma$ in the plane that bounds a ``distorted'' disk, in the sense that $\gamma$ can be realized as the image of the boundary of a disk under an (orientation-preserving, topological) immersion.
If $\gamma$ is a simple closed curve, this immersion is unique up to precomposition by a homeomorphism.
But in general, a SOP can be realized as the boundary of a finite number of topologically distinct immersions.  
This number is called the \emph{multiplicity} $\mu_\gamma$ of $\gamma$.
We refer the reader to \cite{Poenaru_Extension_1995,Graver_When_2011,Evans_Combinatorial_2023} for background on the problem of finding such immersions.

Ferrari's suggestion is to consider the combinatorial family of SOPs on the square lattice viewed up to translation and counted with multiplicity. 
The corresponding generating function $W(t,g) = \sum_\gamma \mu_\gamma t^{F(\gamma)} g^{v(\gamma)}$ in \cite[Eq.~(3.12)]{ferrari2025jackiw} controls the length $v(\gamma)$ of $\gamma$ and the area $F(\gamma)$ of the disk as measured via pull back along an immersion (which is independent of the chosen immersion).
Since each immersion of such a SOP $\gamma$ defines a quadrangulation of the disk by pulling back the square lattice, the multiplicity $\mu_\gamma$ is precisely the number of pairs consisting of an (unrooted) quadrangulations $\mathbf{f}$ of the disk and an immersion of the quadrangulations that sends the boundary to $\gamma$.
For this to be a topological immersion $\mathbf{f}$ must be flat and have all boundary vertices of degree $2$, $3$ or $4$.
Since the immersion is unique up to rotation, $W(t,g)$ is also the generating function of flat quadrangulations together with, say, a choice of north direction, or the generating function of flat quadrangulations rooted at a convex corner and counted with the reciprocal of its number $n(\mathbf{f})$ of convex corners,
\begin{align}
    W(t,g) = \sum_{\substack{\text{rooted flat}\\\text{quadrangulations }\mathbf{f}}} \frac{1}{n(\mathbf{f})} t^{\mathsf{Faces}(\mathbf{f})} g^{\mathsf{Perimeter}(\mathbf{f})}.
\end{align} 
Ignoring the control on area by setting $t=1$, \cite[Section~4.3.1]{ferrari2025jackiw} conjectures the asymptotics of the enumeration $w_{\ell} \coloneqq [g^{\ell}]W(1,g)$ of SOPs of even length $\ell$ counted with multiplicity to be 
\begin{align}\label{eq:ferrariasymp}
    w_{\ell} \underset{\ell\to\infty}{\sim} a\, g_*^{-\ell} \ell^{-3} (\log \ell)^\varphi\qquad(\ell \in 2\Z)
\end{align}
for real constants $a$, $g_*$ and $\varphi$.
If we impose rigidity on the flat quadrangulations $\mathbf{f}$, and thus consider the enumeration $w^{\mathrm{rig}}_{\ell} \leq w_{\ell}$ of SOPs that bound a rigid quadrangulation, then $w^{\mathrm{rig}}_{\ell} = \tfrac{4}{\ell+8}[t^{(\ell+8)/4}] Z(t)$ for $\ell \in 4\Z$ and Theorem~\ref{thm:rigidquadenum} gives
\begin{align}
    w_{\ell}^{\text{rig}} \underset{\ell\to\infty}{\sim} (8\pi)^2 \left(4\pi\right)^{\ell/4} \ell^{-3} (\log n)^{-2}. \qquad(\ell \in 4\Z)
\end{align}
This is precisely of the form \eqref{eq:ferrariasymp}, suggesting that uniform rigid quadrangulations may be a good alternative to Ferrari's combinatorial model.
Amusingly, Ferrari remarks (just above \cite[Conjecture~4.4]{ferrari2025jackiw}) in the context of \eqref{eq:ferrariasymp} that ``pure JT gravity seems to share similarities with Liouville theory coupled to $c=1$ matter''.
Since colorful quadrangulations are expected to belong to the latter universality class (see Question~\ref{q:scalinglimit} below), the bijection $\Psi:\mathcal{R}\to\mathcal{C}$ suggests this similarity may be due to a precise correspondence between the two field theories.

To corroborate the potential relevance of large uniform random rigid quadrangulations to JT gravity at finite cutoff, we consider a flat analogue of the JT gravity trumpet that features prominently in the physics literature (see e.g.\ \cite{Stanford_JT_2020,Saad_JT_2019,Mertens_Solvable_2023}).
As illustrated in Figure~\ref{fig:rigidtrumpet}, the JT trumpet amplitude 
\begin{align}\label{eq:JTtrumpet}
    Z^{\mathrm{trumpet}}(\beta,b) = \frac{1}{\sqrt{2\pi \beta}} e^{-b^2 / (2\beta)}
\end{align}
should very roughly be interpreted as the partition function of hyperbolic metrics on the cylinder with a geodesic boundary of length $b$ and a moderately wiggling boundary of regularized length $\beta$ in an appropriate ``Schwarzian'' limit.

\begin{figure}[h]
    \centering
    \includegraphics[width=.87\linewidth]{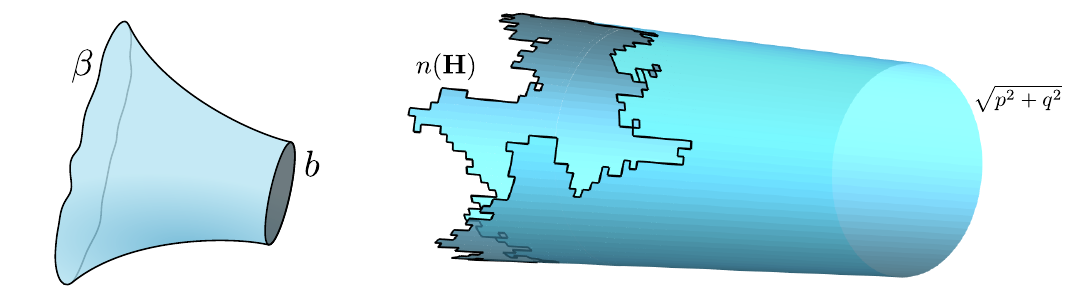}
    \caption{On the left an illustration of the JT trumpet as a hyperbolic cylinder with a geodesic boundary of length $b$ and an asymptotic ``wiggling'' boundary of parameter $\beta$. On the right an example of a rigid half-cylinder, which is actually a simulation of the Boltzmann $(p,q)$-rigid half-cylinder $\mathbf{H}$ with slightly subcritical parameter $t < 1/(4\pi)$.\label{fig:rigidtrumpet}}
\end{figure}

A flat analogue (at finite cutoff) for rigid quadrangulations can be formulated as follows.
We define, somewhat informally, a \emph{rigid half-cylinder} $\mathbf{h}$ to be a one-ended infinite quadrangulation with simple boundary that, sufficiently far from the boundary, looks like a regular square tiling of a (half-infinite) Euclidean cylinder, see Figure~\ref{fig:trumpet}.
It is required to satisfy all requirements of a rigid quadrangulation, including being rooted at a convex corner, except that finitely many rays of $\mathbf{h}$ starting at a concave corner may have infinite length, instead of ending on a straight boundary vertex.
Then the half-infinite rays of $\mathbf{h}$ come in two types: those that spiral around the cylinder to the right and those that spiral to the left.
We denote by $\mathcal{H}^{(p,q)}$ for $p,q\geq 1$ the set of rigid half-cylinders with $p$ right-spiraling rays and $q$ left-spiraling rays, so that the circumference of the cylinder is $\sqrt{p^2+q^2}$.  

\begin{figure}[h]
    \centering
    \includegraphics[width=.45\linewidth]{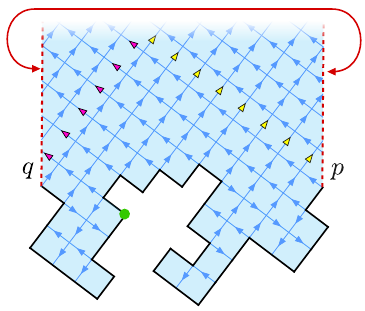}
    \caption{An example of a rigid half-cylinder in $\mathbf{h}\in\mathcal{H}^{(p,q)}$ for $p=8$ and $q=6$ and $n(\mathbf{h}) = 11$ non-root convex corners, cut open along a line (dashed red) perpendicular to the (in this case horizontal) circumference of the cylinder. The $p$ right-spiraling and $q$ left-spiraling rays are indicated by the yellow and pink arrows respectively.\label{fig:trumpet}}
\end{figure}

As will be explained in Section~\ref{sec:trumpets}, a rigid half-cylinder can be easily decomposed into C-type rigid quadrangulations.
Then Corollary~\ref{cor:multivariate} gives the following simple formula for the rigid half-cylinder generating function. 
\begin{corollary}\label{cor:trumpet}
    For $p,q\geq 1$, the (unrooted) generating function of rigid trumpets in $\mathcal{H}^{(p,q)}$ is given by 
    \begin{align}
        H^{(p,q)}(t) = \sum_{\mathbf{h}\in \mathcal{H}^{(p,q)}} \frac{t^{n(\mathbf{h})+1}}{n(\mathbf{h})+1} = \sum_{n\geq \max(p,q)} \frac{1}{n+1}\binom{2n-p+1}{n}\binom{2n-q+1}{n} R(t)^{n+1},
    \end{align}
    where we write $n(\mathbf{h})$ for the number of non-root convex corners of $\mathbf{h}$ and $R(t)$ is the same power series as in Theorem~\ref{thm:rigidquadenum}.
    The rooted generating function is therefore
    \begin{align}
        {H^{(p,q)}}{}'(t) = \sum_{\mathbf{h}\in \mathcal{H}^{(p,q)}} t^{n(\mathbf{h})} = \sum_{n\geq \max(p,q)} \binom{2n-p+1}{n}\binom{2n-q+1}{n} R(t)^n \, R'(t).
    \end{align} 
\end{corollary}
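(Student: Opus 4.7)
The plan is to realize rigid half-cylinders as the result of ``rolling up'' C-type rigid quadrangulations along their root rectangles, and to extract the claimed formula from the explicit expressions provided by Corollary~\ref{cor:multivariate}.

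First I would describe a cutting operation. For a rooted rigid half-cylinder $\mathbf{h}\in\mathcal{H}^{(p,q)}$ with root at a convex corner $c_0$, I would cut $\mathbf{h}$ along the unique perpendicular L-shaped lattice path of length $p+q$ that starts at $c_0$, extends into the interior, and continues to infinity through the regular cylindrical tail while avoiding all concave corners and interior rays. Cutting along this path unrolls $\mathbf{h}$ into a planar region whose finite core should be a C-type rigid quadrangulation $\mathbf{r}\in\mathcal{R}_{\mathrm{C}}^{(p)(q)}$: the two copies of the cut become the two sides of $\mathbf{r}$ adjacent to $c_0$ (of lengths $p$ and $q$ respectively), and together with the first ``regular'' row and column they bound a fully contained $p\times q$ rectangle at the root. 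The inverse ``rolling-up'' map identifies the two boundary sides of the root rectangle with its opposite interior sides and extends the square-lattice tiling periodically to infinity.

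Next I would verify that this cutting--rolling is a bijection between $\mathcal{H}^{(p,q)}$ and (a subfamily of) $\mathcal{R}_{\mathrm{C}}^{(p)(q)}$, and track statistics. Under the bijection, the three convex corners of $\mathbf{r}$ at its root rectangle collapse to the single root of $\mathbf{h}$, giving $n(\mathbf{r})=n(\mathbf{h})+3$. Each unrooted half-cylinder admits $n(\mathbf{h})+1$ distinct rootings (one per convex corner), so the weight $\tfrac{1}{n(\mathbf{h})+1}$ in the definition of $H^{(p,q)}(t)$ precisely passes from rooted to unrooted objects. Using the exponential decomposition $1+B(t,x,y)=\exp(A(t,x,y))$ underlying Corollary~\ref{cor:multivariate}, one identifies $H^{(p,q)}(t)$ with the coefficient $[x^q y^p]$ of the argument $A(t,x,y)$ (modulo the ``minimal'' term corresponding to degenerate rollings, reflected in the lower bound $n\ge\max(p,q)$). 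Reading off $[x^q y^p]\,A$ via the substitution $i=q-1$, $j=p-1$ in the explicit formula immediately yields the claimed expression for $H^{(p,q)}(t)$. The rooted generating function ${H^{(p,q)}}{}'(t)$ then follows by formal differentiation in $t$, using the chain-rule identity $\tfrac{\mathrm{d}}{\mathrm{d}t}R(t)^{n+1}=(n+1)R(t)^n R'(t)$ to cancel the $\tfrac{1}{n+1}$ prefactor.

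The main obstacle will be the clean verification that cutting and rolling-up are mutually inverse and respect rigidity. One must check (i) that the perpendicular L-path is uniquely defined for every rooted half-cylinder, including the correct orientation at the root; (ii) that rolling up a C-type rigid quadrangulation produces a bona fide rigid half-cylinder with exactly $p$ right- and $q$ left-spiraling rays extending straight to infinity; and (iii) that ``minimal'' C-type quadrangulations whose roll-up degenerates into a flat torus or annulus rather than a half-cylinder are correctly identified and excluded, accounting for the lower bound $n\ge\max(p,q)$ in the summation.
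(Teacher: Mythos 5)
Your construction identifies a half-cylinder with a \emph{single} C-type rigid quadrangulation, but that is not the combinatorial structure of $\mathcal{H}^{(p,q)}$, and the discrepancy is quantitative, not just cosmetic. A half-cylinder $\mathbf{h}$ generally has several concave corners at which two infinite rays start; the paper dissects $\mathbf{h}$ along the rays emanating from \emph{all} of these corners and obtains a \emph{sequence} of $k+1$ C-type pieces (the first carrying a marked convex corner). It is exactly this sequence structure that produces
\begin{align*}
{H^{(p,q)}}{}'(t)=[x^py^q]\,\frac{\partial C}{\partial t}\sum_{k\ge0}C(t,x,y)^k,\qquad H^{(p,q)}(t)=[x^py^q]\log\frac{1}{1-C(t,x,y)}=[x^py^q]\log\bigl(1+B(t,x,y)\bigr).
\end{align*}
Your single L-shaped cut would instead yield (a subfamily of) $\mathcal{R}_{\mathrm{C}}^{(p)(q)}$ and hence a subseries of $[x^qy^p]C(t,x,y)$, which is a different power series: already for $p=q=2$ one finds $[x^2y^2]\log(1+B)=\tfrac12 t^2+\cdots$, where the weight $\tfrac12$ comes from a half-cylinder whose dissection has \emph{two} C-type pieces, whereas $[x^2y^2]C=t^3+\cdots$. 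A bijection with single C-type pieces can never produce such fractional coefficients. The cut itself is also problematic: a lattice path running ``to infinity through the regular tail'' of a cylinder of circumference $\sqrt{p^2+q^2}$ winds around the cylinder indefinitely, so cutting along it does not isolate a finite planar core; the finite pieces only appear after cutting along the full collection of infinite rays.

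Because of this, the step where you ``identify $H^{(p,q)}(t)$ with $[x^qy^p]$ of the argument $A$'' of the exponential is unsupported: $A=\log(1+B)=\log\frac{1}{1-C}$ is indeed the right series, but the logarithm is precisely the generating-function shadow of the sequence (or necklace, after unrooting with the weight $\tfrac{1}{n(\mathbf{h})+1}$) decomposition that your construction omits; your bijection as stated would instead justify $[x^qy^p]C$. Finally, explaining the summation range by ``excluding degenerate rollings'' is not a derivation. Extracting $[x^py^q]$ from $A$ forces $i=p-1$, $j=q-1$ and hence the range $n\ge\max(p,q)-1$ (note that $[xy]A=\sum_{n\ge0}\tfrac{1}{n+1}\binom{2n}{n}^2R^{n+1}=t$, consistent with $[xy]B=t$, whereas starting the sum at $n\ge\max(p,q)$ would give $t-R$); any mismatch with the displayed bound must be settled by this computation rather than by postulating excluded cases.
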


\noindent
For $p,q \geq 1$ and $0 < t \leq \tfrac{1}{4\pi}$ we consider the random rigid half-cylinder $\mathbf{H}\in \mathcal{H}^{(p,q)}$ with probability
\begin{align}
    \mathbb{P}_t^{(p,q)}(\mathbf{H}=\mathbf{h}) = \frac{1}{H^{(p,q)}(t)}\frac{t^{n(\mathbf{h})+1}}{n(\mathbf{h})+1},
\end{align}
called the \emph{(unrooted) Boltzmann $(p,q)$-rigid half-cylinder}.
In particular, when $q=1$ and $p\geq 1$ the Boltzmann $(p+1,1)$-rigid half-cylinder corresponds to sampling a rigid quadrangulation with 1-fold base of length $p$ from the partition function $[y^{p}]P(t,y)$ appearing in Corollary~\ref{cor:multivariate}, then gluing the base to a cylinder with a single left-spiraling ray in a canonical way and uniformly rooting the result on a convex corner.  

\begin{proposition}\label{prop:trumpetasymp}
    At the critical value $t = t_* = \frac{1}{4\pi}$, we have 
    \begin{align}
        H^{(p,q)}(t_*) \sim \frac{1}{\pi} \frac{2^{-p-q}}{p^2+q^2} \quad \text{as }p^2 + q^2 \to \infty. 
    \end{align}
    Furthermore, the random number $n(\mathbf{H})$ of convex corners in the critical Boltzmann $(p,q)$-rigid half-cylinder under $\mathbb{P}^{(p,q)}_{t_*}$ converges in distribution upon proper normalization to an inverse-exponential law,
    \begin{align}
        \frac{n(\mathbf{H}) \log n(\mathbf{H})}{p^2+q^2} \xrightarrow[p^2+q^2\to\infty]{\mathrm{(d)}} \frac{1}{\mathcal{E}(1)},
    \end{align} 
    where $\mathcal{E}(1)$ is an exponential random variable of mean $1$, and the convergence holds along any sequence of $(p,q) \in \Z^2_{>0}$ such that $p^2 + q^2 \to \infty$.
\end{proposition}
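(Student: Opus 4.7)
My approach has two stages: establish $H^{(p,q)}(t_*) \sim 2^{-p-q}/(\pi a)$ (writing $a = p^2+q^2$), then identify the scaling limit of $n(\mathbf{H})$ via the Laplace-transform identity $\mathbb{E}_{t_*}^{(p,q)}[(\tilde t/t_*)^{n(\mathbf{H})+1}] = H^{(p,q)}(\tilde t)/H^{(p,q)}(t_*)$, which is immediate from the Boltzmann weight. For stage one, I use that $R(t_*) = 1/16$: Theorem~\ref{thm:rigidquadenum} forces $t_* = 1/(4\pi)$ to be the radius of convergence of $Z(t)$, and monotonicity of $R$ on $[0,t_*]$ together with the fact that $\phi(r) = \sum_{n\geq 0} \binom{2n}{n}^2 r^{n+1}/(n+1)$ has radius of convergence $1/16$ forces $R(t_*) = 1/16$. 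Substituting into the formula in Corollary~\ref{cor:trumpet} and applying the local CLT, $\binom{2n-p+1}{n} \sim \frac{2\cdot 4^n\cdot 2^{-p}}{\sqrt{\pi n}} e^{-p^2/(4n)}$ uniformly for $p = O(\sqrt n)$, one finds the summand is $\sim \frac{2^{-p-q}}{4\pi n^2} e^{-a/(4n)}$. Approximating by $\int_0^\infty e^{-a/(4n)}/n^2\, dn$ and substituting $v = a/(4n)$ yields $4/a$, giving $H^{(p,q)}(t_*) \sim 2^{-p-q}/(\pi a)$; the cut-off at $n = \max(p,q)$ is negligible since $e^{-a/(4n)}/n^2$ is exponentially small there.

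For the distributional limit I would first derive the singular expansion of $R$: from $\binom{2k}{k}^2 \sim 16^k/(\pi k)$, $\phi'(r) \sim -\log(1-16r)/\pi$ as $r \to 1/16^-$, so integrating, $t_* - \phi(r) \sim (1-16r)\log(1/(1-16r))/(16\pi)$, and inverting gives $\epsilon := 1 - 16 R(\tilde t) \sim 16\pi \delta/\log(1/\delta)$ with $\delta = t_* - \tilde t$. Writing $H^{(p,q)}(t) = G(R(t))$ with $G'(r) = \sum_k c_k r^k$ and $c_k = \binom{2k-p+1}{k}\binom{2k-q+1}{k}$, the local CLT at $s = (1-\tilde\epsilon)/16$ yields
\[
G'(s) \sim \frac{4\cdot 2^{-p-q}}{\pi} \sum_k \frac{e^{-a/(4k) - k\tilde\epsilon}}{k} \sim \frac{8\cdot 2^{-p-q}}{\pi}\, K_0\bigl(\sqrt{a\tilde\epsilon}\bigr),
\]
via the integral representation $2K_0(x) = \int_0^\infty t^{-1} e^{-t - x^2/(4t)}\,dt$. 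Integrating from $s = R(\tilde t)$ up to $1/16$ using $\int_0^X x K_0(x)\,dx = 1 - X K_1(X)$ yields the key estimate $H^{(p,q)}(\tilde t)/H^{(p,q)}(t_*) \sim \sqrt{a\epsilon}\, K_1(\sqrt{a\epsilon})$.

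Setting $\tilde t/t_* = e^{-\mu}$ with $\mu = \lambda\log a/a$ for fixed $\lambda > 0$ gives $\delta \sim \mu/(4\pi)$, $\epsilon \sim 4\lambda/a$, so $\sqrt{a\epsilon} \sim 2\sqrt\lambda$. Since $(\tilde t/t_*)^{n+1} = e^{-\mu(n+1)}$, this yields $\mathbb{E}[e^{-\lambda n(\mathbf{H})\log a/a}] \to 2\sqrt\lambda\, K_1(2\sqrt\lambda)$. Using $K_1(2\sqrt\lambda) = \tfrac{\sqrt\lambda}{2}\int_0^\infty t^{-2} e^{-t-\lambda/t}\,dt$ and the substitution $s = \lambda/t$, one identifies $2\sqrt\lambda\, K_1(2\sqrt\lambda) = \int_0^\infty e^{-s-\lambda/s}\,ds = \mathbb{E}[e^{-\lambda/\mathcal{E}(1)}]$. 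By Lévy's continuity theorem, $n(\mathbf{H})\log a/a \Rightarrow 1/\mathcal{E}(1)$. The same limit forces $n(\mathbf{H})$ to concentrate on scale $a/\log a$, so $\log n(\mathbf{H}) = \log a\,(1+o_{\mathbb{P}}(1))$ and $\log a$ may be replaced by $\log n(\mathbf{H})$ in the final statement.

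The main technical obstacle is making the local-CLT-based approximations uniform in the summation index $k$ as $a \to \infty$ on the scale $\sqrt{a\epsilon} = O(1)$. Careful tail estimates are needed for the regime $k \lesssim \max(p,q)$ (where the CLT breaks down but $c_k r^k$ can be bounded directly via $\binom{2k}{k}^2 r^k$), and for $k \gg a/\epsilon$ (where the factor $e^{-k\epsilon}$ supplies Gaussian decay). These are routine Laplace-method estimates, but the bookkeeping must be done carefully to ensure uniformity across all sequences $(p,q) \in \mathbb{Z}^2_{>0}$ with $p^2+q^2\to\infty$, in particular allowing $\min(p,q)$ to stay bounded while $\max(p,q) \to \infty$.
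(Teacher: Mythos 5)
Your proposal follows essentially the same route as the paper's proof: $R(t_*)=1/16$ together with the logarithmic singular expansion of $R$ near $t_*$, a Gaussian approximation of the product of binomials giving a summand $\asymp \frac{2^{-p-q}}{n^2}e^{-(p^2+q^2)/(4n)}$, a Riemann-sum-to-integral step producing $\sqrt{4\mu}\,K_1(\sqrt{4\mu})$, the identification of $xK_1(x)$ with the Laplace transform of $1/\mathcal{E}(1)$, and the a posteriori replacement of $\log(p^2+q^2)$ by $\log n(\mathbf{H})$. The only substantive difference is cosmetic: you integrate $G'(s)\sim\frac{8\cdot 2^{-p-q}}{\pi}K_0(\sqrt{a\tilde\epsilon})$ in the $R$-plane using $\int_0^X xK_0(x)\,\rmd x = 1-XK_1(X)$, whereas the paper evaluates the sum at $R(t_*e^{-\mu_{p,q}})$ directly; both land on the same Bessel function.

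One concrete point needs repair. For the regime $k\lesssim\max(p,q)$ you propose to bound $c_kr^k$ "directly via $\binom{2k}{k}^2r^k$". That bound only gives $\frac{1}{k+1}c_k16^{-k-1}=O(k^{-2})$, so the contribution of $\max(p,q)\leq k\leq a^{3/4}$ is $O(1/\max(p,q))$ --- but the main term is $2^{-p-q}/(\pi a)$, and since $p+q\geq\sqrt{p^2+q^2}$ this is exponentially smaller than $1/\max(p,q)$. The crude bound therefore cannot show the small-$k$ range is negligible. What is needed (and what the paper uses) is a bound valid uniformly in $n,p,q$ that retains both suppression factors, namely $\frac{1}{n+1}\binom{2n-p+1}{n}\binom{2n-q+1}{n}16^{-n-1}\leq C\,\frac{2^{-p-q}}{n^2}e^{-(p^2+q^2)/(4n)}$; this follows from a Hoeffding/Chernoff-type estimate on $\binom{m}{k}2^{-m}$ combined with the $1/\sqrt{m}$ local bound. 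With that, the cutoff range $n\leq a^{3/4}$ contributes $O\bigl(2^{-p-q}a^{-1}e^{-c\,a^{1/4}}\bigr)$ and is negligible, and the same bound supplies the dominated-convergence control that makes your local CLT step uniform. The rest of your argument, including the $k\gg a/\epsilon$ tail handled by the factor $e^{-k\tilde\epsilon}$ and the final tightness argument for replacing $\log a$ by $\log n(\mathbf{H})$, is sound.
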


\noindent 
This result can be equivalently stated in terms of unnormalized measures.
For $p,q \geq 1$ and $\varepsilon > 0$ we define the \emph{renormalized boundary length} $\mathsf{Len}^{\varepsilon} : \mathcal{H}^{(p,q)} \to (0,\infty)$ by setting $\mathsf{Len}^{\varepsilon}(\mathbf{h}) = \tfrac{1}{2}\varepsilon^2 n(\mathbf{h})\log n(\mathbf{h})$.
Let $\mu^{(p,q)}$ be the discrete measure on $\mathcal{H}^{(p,q)}$ defined by $\mu^{(p,q)}(\{\mathbf{h}\}) = 2^{p+q} t_*^{n(\mathbf{h})+1}/(n(\mathbf{h})+1)$.
Then Proposition~\ref{prop:trumpetasymp} can be rephrased as a weak convergence of the pushforward measure $\mathsf{Len}^{\varepsilon}_{\,*} \mu^{(p,q)}$ on $(0,\infty)$: in a limit where $p^2+q^2 \to \infty$ and $\varepsilon\to 0$ such that $\varepsilon \sqrt{p^2+q^2} \to b > 0$ we have
\begin{align}
    \varepsilon^{-2}\,\mathsf{Len}^{\varepsilon}_{\,*} \mu^{(p,q)} \xrightarrow[\substack{p^2+q^2\to\infty\\\varepsilon\to 0}]{\mathrm{weak}} Z^{\text{half-cyl}}(\beta,b) \rmd\beta, \qquad Z^{\text{half-cyl}}(\beta,b) = \frac{1}{2\pi \beta^2} e^{-b^2/(2\beta)}.\label{eq:Zhalfcyl}
\end{align}
It is interesting to compare $Z^{\text{half-cyl}}(\beta,b)$, which may be tentatively interpreted as the continuum partition function at finite cutoff of (unmarked) half-cylinders with circumference $b$ and regularized boundary length $\beta$, to the JT trumpet partition function \eqref{eq:JTtrumpet}.
The resemblance is even closer to the half-cylinder amplitude quoted in \cite[Eq.~(8.56)]{Godet_New_2020} for $\widehat{\mathrm{CGHS}}$ gravity, which can be considered as a flat space analogue of JT gravity.

\subsection{Questions \& Discussion}

The presented results raise many natural questions, of which we highlight several here.

\begin{enumerate}[label=\Alph*.,font=\bfseries]
    \item \textbf{Description of inverse bijection.} Is there an explicit description of the inverse mapping $\Psi^{-1} : \mathcal{C} \to \mathcal{R}$ that does not involve the peeling exploration of a colorful quadrangulation? Note that in the construction of $\mathfrak{q} = \Psi(\mathbf{r})$, as a map drawn on the double of the rigid quadrangulation $\mathbf{r}$, the boundary of $\mathbf{r}$ corresponds to a simple closed curve on the sphere that visits each vertex of $\mathfrak{q}$ once. A description of $\Psi^{-1}$ could thus come in the form of an algorithm to identify this curve in a given colorful quadrangulation. This is akin to the \emph{Peano curves} involved in many bijections between families of planar maps and lattice walks, like Mullin's bijection \cite{Mullin_enumeration_1967} for spanning-tree-decorated maps, Sheffield's inventory accumulation \cite{Sheffield_Quantum_2016} for FK-decorated maps, and Bernardi's bijection for site-percolated triangulations \cite{Bernardi_Bijective_2007} (see also \cite[Section~2]{Gwynne_Mating_2023} for an overview of their role in discrete mating of trees). A hint in the direction of a Peano curve algorithm in our case can be found in Section~\ref{sec:ascentpath}, where it is shown that the portion of the boundary between the root corner and the next convex corner is related to a simple ascent path on the colorful quadrangulation. Combined with rerooting the rigid quadrangulation at the next convex corner, one can in principle iterate the procedure to identify the full boundary. However, the repeated rerooting becomes tedious rather quickly.
    \item \textbf{Generalized bijection.} Can $\Psi : \mathcal{R} \to \mathcal{C}$ be extended to a bijection between more general discrete flat disks and $\Z$-labeled maps? There are natural generalizations of the combinatorial families on both sides for which the enumeration remains tractable.
    For instance, in a forthcoming work we will address the enumeration of various families of tiled flat disks (including square-, triangle- and hexagon-tiled disks) without rigidity constraint via bijections with lattice walk.
    Another natural extension is to consider rigid quadrangulations with concave vertices of arbitrary degree larger or equal to $4$.
    On the other side, one can drop the colorful condition on the $\Z$-labeled quadrangulation and allow for faces with alternating label.
    The corresponding generating functions, also obtained by Bousquet-M\'elou, Elvey Price in \cite{Bousquet-Melou_generating_2020}, are very similar to those of colorful quadrangulations.
    Moreover, one can control the number of alternating faces \cite{Bousquet-Melou_Eulerian_2020,ElveyPrice_six_2023}, putting it in the realm of the six-vertex model, and the number of local maxima of the labeling \cite{Bousquet-Melou_Refined_2025}.\\
    With a little trick these enumeration problems can be interpreted in terms of rigid quadrangulations: by inserting in each alternating face with labels $(i,i+1)$ a new degree-$2$ vertex of label $i-1$, we can bijectively relate general $\Z$-labeled quadrangulations to colorful quadrangulations with a distinguished set of degree-$2$ local minima.
    By the dictionary in Theorem~\ref{thm:dictionary} these are in correspondence with rigid quadrangulations with a distinguished set of right-tangential convex corners of degree $2$.
    However, it is somewhat unsatisfactory that the definition of this family of disks depends strongly on the choice of root corner.
    This should be contrasted to $\mathcal{R}$ and $\mathcal{C}$, which are rerooting symmetric: $\mathbf{r}\in\mathcal{R}$ can be rerooted at each convex corner, while $\mathfrak{q} \in \mathcal{C}$ can be rerooted at each directed edge up to a unique affine relabeling to satisfy the root-label condition.
    So a better question is: does $\Psi$ admit a generalization to \emph{rerooting-symmetric} combinatorial families of disks and $\Z$-labeled maps? 

    \item\label{q:scalinglimit} \textbf{Disk area.} Does the area of a rigid quadrangulation have an interpretation at the level of colorful quadrangulations? What is the limiting distribution of the area for large rigid quadrangulations? In the dictionary of Theorem~\ref{thm:dictionary} two types of constituents of rigid quadrangulations are conspicuously absent: the inner vertices and faces of a rigid quadrangulation $\mathbf{r}$ do not have local counterparts at the level of the colorful quadrangulation $\mathfrak{q} = \Psi(\mathbf{r})$. Faces of $\mathbf{r}$ correspond to intersections of rows and columns, and therefore to pairs of an even and an odd edge of $\mathfrak{q}$ subject to a condition that does not enforce these edges to be nearby in $\mathfrak{q}$. Similarly, inner vertices of $\mathbf{r}$ correspond to special pairs of faces of $\mathfrak{q}$. This means in particular that it is challenging to enumerate rigid quadrangulations with control on the area. We refer to \cite[Section~4.3.3]{ferrari2024randomdisksconstantcurvature} for questions and predictions concerning the area distribution in the context of JT gravity.
    \item \textbf{Scaling limit.} Does the geometry of the uniform rigid quadrangulation of size $n$ admit a scaling limit as $n\to \infty$ in an appropriate topology? The asymptotics enumeration of Theorem~\ref{thm:rigidquadenum} and the distributional convergence of the size of Boltzmann rigid quadrangulations discussed in Section~\ref{sec:jt} hint at this possibility. 
    Given the similarity to $O(2)$ loop-decorated random planar maps \cite{Borot_recursive_2012,Budd_peeling_2018,Aidekon_scaling_2024,Kammerer_Gaskets_2024}, we expect the metric and the level lines of the uniform colorful quadrangulation to admit a scaling limit in terms of critical ($\gamma=2$) Liouville Quantum Gravity (LQG$_2$) together with an independent $\kappa = 4$ Conformal Loop Ensemble (CLE$_4$).
    The combination of LQG$_2$ and CLE$_4$ in turn is known to be closely related to a Brownian excursion in the half plane via a critical version of the mating of trees \cite{Aru_Mating_2021,Aru_Brownian_2023}.
    Since the latter involves an exploration of CLE$_4$ that can be understood as a continuum analogue of the peeling process underlying the bijection in Theorem~\ref{thm:rootedbijection}, it is natural to expect that the Brownian excursion can also be obtained in a scaling limit of the row-by-row exploration of the uniform rigid quadrangulation.
    A natural route is via comparison of the self-similar Markov trees (or signed growth-fragmentations) embedded in the exploration processes \cite{Bertoin_Martingales_2018,Bertoin_Self_2024} and the Brownian excursion \cite{Aidekon_Growth_2022} (see also \cite[Example~3.13]{Bertoin_Self_2024}).\\
    We finish this discussion with a conjecture based on a combination of these ideas. Consider a uniform rigid quadrangulation of size $n$ immersed in the square lattice $\Z^2 \subset \R^2$ with root corner at the origin (and aligned, say, with the upper-left quadrant). Denote by $\xi^{(n)} : [0,1] \to \R^2$ the random closed curve obtained by a constant-speed parametrization of its boundary. Then we expect that $\left(\xi^{(n)}(t) / \sqrt{n \log n}\right)_{t\in[0,1]}$ converges in distribution to a rotationally-invariant random closed curve $(\xi(t))_{t\in [0,1]}$. Moreover, its law is conjectured to be given in terms of a standard (one-dimensional) Brownian bridge $(X(t))_{t\in[0,1]}$ and an independent standard Brownian excursion $(\mathbf{e}(t))_{t\in [0,1]}$ via $\xi(t) = (X(t),Y(t))$ where
    \begin{align}
        Y(t) = \int_0^{\mathbf{e}(t)} \operatorname{sgn}\Big[ X\big(\inf\{s \in [t,1]: \mathbf{e}(s) = y \}\big) - X\big(\sup\{s \in [0,t]: \mathbf{e}(s) = y \}\big)\Big] \rmd y.
    \end{align}
    Note that the supposed rotational symmetry of $(\xi(t))_{t\in[0,1]}$ is not at all obvious from this construction.
\end{enumerate}

\subsection*{Acknowledgments}

We warmly thank Frank Ferrari for discussions on JT gravity at finite cutoff and for posing the challenge concerning the combinatorics of flat quadrangulations that inspired this pursuit. We are also grateful to Bart Zonneveld, Andrew Elvey Price and Nabin Shahid for useful discussions at various stages of the project. This work is supported by the VIDI programme with project number VI.Vidi.193.048, which is financed by the Dutch Research Council (NWO).

\section{Bijections with colorful quadrangulations}\label{sec:rowbyrow}

The bijection between rigid quadrangulations with $1$-fold base and colorful quadrangulations of the disk with labels $0,1,0,1,\ldots$ on the boundary is obtained by relating canonical explorations of both types of quadrangulations.
In the case of rigid quadrangulations this amounts to exploring row by row starting from the base, while for colorful quadrangulations we explore by peeling starting at the root.
In the next two subsection we will describe these explorations in detail, which involves formalizing the notion of an explored and unexplored part of the quadrangulations.

\subsection{Row-by-row exploration of rigid quadrangulations}\label{sec:rowbyrow}

\begin{figure}[ht]
    \centering
    \includegraphics[width=.7\linewidth]{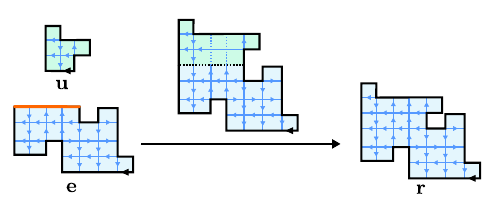}
    \caption{An example of gluing a base-$2$ rigid quadrangulation $\mathfrak{u}$ to the open side (in orange) of a partial rigid quadrangulation $\mathfrak{e}$, resulting in the rigid quadrangulation $\mathbf{r}$. Therefore, $\mathfrak{e}\subset\mathbf{r}$.
    \label{fig:gluingdisks}}
\end{figure}

Rigid quadrangulations with a $1$-fold base of length $p$ will be called \emph{base-$p$} rigid quadrangulations in the following.
To describe the exploration of the latter, we introduce a slight generalization: a \emph{partial} base-$p$ rigid quadrangulation $\mathbf{e}$ is a flat quadrangulation with zero or more distinguished horizontal sides (different from the base), called \emph{open sides}.
The inner edges of this quadrangulation are oriented along rays just like the rigid quadrangulations, except the rays are allowed to start and/or end at a straight vertex of an open side.
The \emph{size} of an open side is the number of rays starting at that side plus one.
To an open side $s$ of size $\ell$ one can naturally glue a base-$\ell$ rigid quadrangulation $\mathbf{u}$ as follows (see Figure~\ref{fig:gluingdisks} for an illustration).
Let us assume $s$ is a top side.
Then we identify the base of $\mathbf{u}$ with the side $s$ in such a way that the rays in $\mathbf{e}$ that start at $s$ align with the rays of $\mathbf{u}$ ending at its base.
As a consequence, the rays in $\mathbf{e}$ ending at $s$ touch the base of $\mathbf{u}$ somewhere in the interior of an edge.
To turn the result into a (partial) rigid quadrangulation, we extend each of these rays straight through the corresponding column of quadrangles until it meets the boundary, where it ends in a new straight corner.
Finally, the horizontal strips above and below $s$ are merged into a single horizontal strip.
In the case that $s$ is a bottom side, $\mathbf{u}$ is first rotated a half turn to make the base a top side and the gluing proceeds similarly.

We say that a partial base-$p$ rigid quadrangulation $\mathbf{e}$ is a \emph{submap} of a base-$p$ rigid quadrangulation $\mathbf{r}\in \mathcal{R}^{(p)}$, denoted $\mathbf{e}\subset \mathbf{r}$, if for each open side $s$ of $\mathbf{e}$ of size $\ell_s$ there exists a base-$\ell_s$ rigid quadrangulation $\mathbf{u}_s$ such that gluing $\mathbf{u}_s$ to $s$ for each side $s$ results in $\mathbf{r}$.  
Given $\mathbf{e}\subset\mathbf{r}$, it is easy to see that the rigid quadrangulations $\mathbf{u}_s$ are uniquely determined.
Hence, for fixed $\mathbf{e}$, the set $\{ \mathbf{r}\in\mathcal{R}^{(p)} : \mathbf{e}\subset \mathbf{r}\}$ is in bijection with tuples $(\mathbf{u}_s)_s$ of partial rigid quadrangulations of appropriate base length.
The notion of submap naturally extends to a pair of partial rigid quadrangulations, $\mathbf{e}\subset\mathbf{e}'$, where now the $\mathbf{u}_s$ are allowed to be partial rigid quadrangulations themselves.

\begin{lemma}\label{lem:minimalsubmaps}
    Every rigid quadrangulation $\mathbf{r}$ contains exactly one of the following submaps (see Figure~\ref{fig:peelingsteps}):
    \begin{itemize}
        \item $\mathbf{G}_{\ell,p-\ell-1}$ in which the base of length $p$ splits into two open sides of sizes $\ell$ and $p-\ell-1$ for some $\ell = 0,\ldots,p-1$. In the special cases $\ell=0$ or $\ell=p-1$ (or both) there will be a convex corner instead of an open side.
        \item $\mathbf{R}^p_\sigma$ where $\sigma$ is a finite word in the letters $\{\uparrow,\downarrow\}$. If the number of $\downarrow$ is $k\geq 0$ and the number of $\uparrow$ is $\ell\geq 0$ then it has an open top side of size $p+\ell$ and an open bottom side of size $k\geq 1$ to the right of the base. If $k=0$, there is no bottom side but a convex corner. The word $\sigma$ determines the orientations of the $\ell+k$ rightmost vertical rays (from left to right). 
        \item $\mathbf{L}^p_\sigma$ is similar to $\mathbf{R}^p_\sigma$, but the open side on the bottom appears to the left of the base and $\sigma$ determines the orientations of the $\ell+k$ leftmost vertical rays (from left to right).
    \end{itemize}
\end{lemma}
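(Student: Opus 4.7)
The plan is to prove Lemma~\ref{lem:minimalsubmaps} by analyzing the structure of $\mathbf{r} \in \mathcal{R}^{(p)}$ in a neighborhood of the base and extracting a canonical minimal revealed submap. Denote the boundary vertices on the base in order by $b_0, b_1, \ldots, b_p$, with $b_0$ the root corner. Each interior base vertex $b_i$ with $1\le i\le p-1$ is straight of degree $3$, hence carries a unique inner edge perpendicular to the base, which extends into a vertical column (ray) terminating at $b_i$ of some length $m_i\ge 1$, whose origin is a concave corner on the boundary of $\mathbf{r}$. Let $q_L, q_R\ge 1$ denote the lengths of the sides emanating upward from $b_0$ and from $b_p$, respectively.

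I would then distinguish cases by the quantities $\min_i m_i$, $q_L$ and $q_R$. Case $\mathbf{G}_{\ell, p-\ell-1}$ occurs precisely when some $m_i=1$: its top is then a concave corner adjacent to $b_i$ at height $1$ above the base. Choosing $i = \ell+1$ to be the leftmost such index, the concave corner together with its single adjacent face constitutes the minimal submap $\mathbf{G}_{\ell, p-\ell-1}$, splitting the residual unexplored portion into two open sides of sizes $\ell$ and $p-\ell-1$. If instead $m_i\ge 2$ for every $i$, then the horizontal ray $H$ at height $1$ above the base is a single inner ray spanning the entire base, with endpoints on the left and right sides. These endpoints are straight iff $q_L\ge 2$ and $q_R\ge 2$ respectively, and concave iff $q_L=1$ and $q_R=1$ respectively. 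Since any ray in $\mathbf{r}$ connects a concave corner to a straight boundary vertex, rigidity forces exactly one of $q_L=1$, $q_R=1$. The case $q_R=1$ gives $\mathbf{R}^p_\sigma$, and symmetrically $q_L=1$ gives $\mathbf{L}^p_\sigma$: here the revealed submap consists of $H$ together with the first layer of vertical rays encountered past the respective end of the base, and the word $\sigma\in\{\uparrow,\downarrow\}^*$ records whether each such ray is an up-column (extending into the unexplored region above $H$) or a down-column (extending into the unexplored region below), read from left to right, with $\sigma$ terminating once a convex corner closes the row.

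Existence follows from the case analysis being exhaustive; uniqueness follows from the diagnostic conditions ($\min_i m_i=1$ vs.\ $\min_i m_i\ge 2$, and within the latter whether $q_L=1$ or $q_R=1$) being mutually exclusive and fully determining which submap type is contained in $\mathbf{r}$. The compatibility of the revealed submap with the definition of a partial rigid quadrangulation (i.e.\ that the remaining unexplored parts can indeed be filled in by base-$\ell_s$ rigid quadrangulations glued to the open sides $s$) is then a structural consequence of rigidity.

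I expect the main obstacle to lie in making the description of $\mathbf{R}^p_\sigma$ and $\mathbf{L}^p_\sigma$ fully rigorous. One needs to show that tracing $H$ past the end of the base proceeds step by step in a well-defined way: at each step the next vertex encountered is either (i) the concave origin of a new up-column, contributing $\uparrow$ and an edge to the open top side, (ii) the straight endpoint of a new down-column, contributing $\downarrow$ and an edge to the open bottom side, or (iii) the convex corner that terminates the word. Termination is clear from the finiteness of $\mathbf{r}$, but verifying that the bookkeeping yields exactly the claimed sizes $p+\ell$ of the open top side and $k$ of the open bottom side (with $k\ge 1$ unless $\sigma$ contains no $\downarrow$, in which case the right end is a convex corner) will require care, essentially amounting to checking that at each step of the exploration the local configuration is forced by the allowed vertex types of Figure~\ref{fig:rigidquadrangulation}b.
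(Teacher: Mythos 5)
Your overall strategy --- a direct local analysis of the configuration at height $1$ above the base --- is essentially the paper's: the paper classifies $\mathbf{r}$ according to whether the boundary vertices $v_{\mathrm{L}}$, $v_{\mathrm{R}}$ directly above the two endpoints of the base are convex, straight or concave, and your diagnostics ($\min_i m_i$ together with $q_L,q_R$) are reformulations of that trichotomy. But there are two concrete gaps. First, your dichotomy breaks on the case $\mathbf{G}_{0,0}$: for the unit square (the unique element of $\mathcal{R}_3$, which lies in $\mathcal{R}^{(1)}$) there are no interior base vertices, so ``$m_i\ge 2$ for every $i$'' holds vacuously, yet $q_L=q_R=1$ with both endpoints \emph{convex} and the top edge a boundary edge; there is no ray $H$ at all, and your claim that ``rigidity forces exactly one of $q_L=1$, $q_R=1$'' is false. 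More generally, you assert that an endpoint of $H$ with $q=1$ is concave, silently excluding the convex possibility --- which is exactly the possibility that produces the $\mathbf{G}$ cases with a convex end-corner in the paper's case analysis.

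Second, in the case ``some $m_i=1$'' you pick the leftmost such index and declare the submap to be $\mathbf{G}_{\ell,p-\ell-1}$, but the conclusion requires knowing that the set $S=\{i: m_i=1\}$ is either a pair of \emph{adjacent} indices (whose two concave corners at height $1$ are joined by the single boundary edge lying on top of the base row --- generically there are \emph{two} such concave corners, not one as in your description ``the concave corner together with its single adjacent face,'' and the submap comprises the whole row above the base, not a single face) or a single index next to an end of the base, in which case that end's top corner is convex. This is precisely where rigidity enters: the horizontal ray issued by each concave corner at height $1$ must terminate at a straight boundary vertex without passing through another concave corner, which forces the structure of $S$ and identifies $v_{\mathrm{L}}$ or $v_{\mathrm{R}}$ as the straight endpoints of those rays. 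Without this step, the ``exactly one'' claim and the asserted split into open sides of sizes $\ell$ and $p-\ell-1$ are not established. (You do flag the analogous bookkeeping for $\mathbf{R}^p_\sigma$ and $\mathbf{L}^p_\sigma$ as unfinished; the same level of care is needed in the $\mathbf{G}$ case.)
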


\begin{figure}[h]
    \centering
    \includegraphics[width=\linewidth]{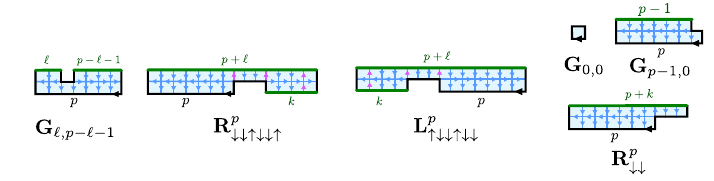}
    \caption{The three types of submaps of rigid quadrangulations described in Lemma~\ref{lem:minimalsubmaps}. The labels next to the open sides (green) indicate their size. The figures on the far right illustrate the boundary cases that have fewer than two open sides.
    \label{fig:peelingsteps}}
\end{figure}

\begin{proof}
    One may easily check that none of these are submaps of each other, so the options are mutually exclusive.
    Let us consider the endpoints $v_{\mathrm{L}}$ and $v_{\mathrm{R}}$ in $\mathbf{r}$ of the vertical edges adjacent to the base on the left and on the right.
    One of the following cases must apply:
    \begin{itemize}
        \item $v_{\mathrm{R}}$ and $v_{\mathrm{L}}$ are both convex corners. Then $\mathbf{r} = \mathbf{G}_{0,0}$ is the only option.
        \item Either $v_{\mathrm{L}}$ or $v_{\mathrm{R}}$ is a convex corner. Because of the downward ray next to this convex corner, it must be followed by a concave corner. The horizontal ray starting at the latter must end on a straight corner, which is the other one of $v_{\mathrm{L}}$ or $v_{\mathrm{R}}$. Hence, $\mathbf{G}_{\ell-1,0} \subset \mathbf{r}$ or $\mathbf{G}_{0,\ell-1} \subset \mathbf{r}$.
        \item $v_{\mathrm{R}}$ and $v_{\mathrm{L}}$ are both straight. Then the horizontal rays ending at $v_{\mathrm{R}}$ and $v_{\mathrm{L}}$ must start at a pair of adjacent concave corners. Hence, $\mathbf{G}_{k,\ell-k-1}\subset \mathbf{r}$ for some $k=1,\ldots,\ell-2$.
        \item $v_{\mathrm{R}}$ is a concave corner. Then the horizontal ray starting at $v_{\mathrm{R}}$ will end at a straight $v_{\mathrm{L}}$. Tracing the boundary from $v_{\mathrm{R}}$ to the right, the next corner at distance $d+1\geq 1$ can be
        \begin{itemize}
            \item a convex corner. Then $\mathbf{R}^{\ell}_{(\downarrow)^{d}} \subset \mathbf{r}$.
            \item a concave corner. The horizontal ray starting here will encounter an arbitrary sequence $\sigma$ of up and down rays before ending at a straight corner. Then $\mathbf{R}^{\ell}_{(\downarrow)^d\uparrow\sigma} \subset \mathbf{r}$.
        \end{itemize}
        \item $v_{\mathrm{L}}$ is a concave corner. Analogously to the previous case, $\mathbf{R}^{\ell}_\sigma \subset\mathbf{r}$ for some word $\sigma$. 
    \end{itemize}
\end{proof}

\begin{figure}[t]
    \centering
    \includegraphics[width=.9\linewidth]{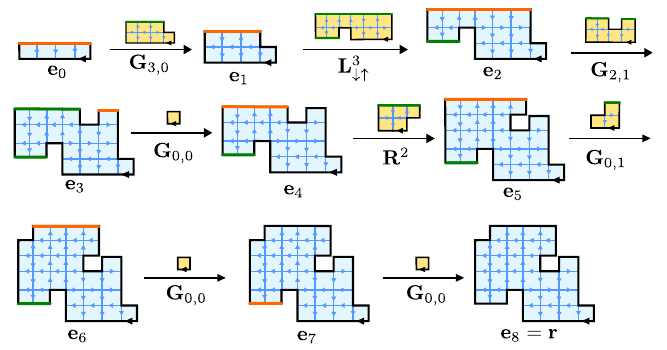}
    \caption{A row-by-row exploration of the rigid quadrangulation $\mathbf{r}\in\mathcal{R}^{(4)}_9$ shown in the bottom concave corner. The open sides of $\mathbf{e}_i$ are indicated in green and orange. Each step corresponds to gluing a partial rigid quadrangulation (shown in yellow) to the orange side.
    \label{fig:peeling}}
\end{figure}

A \emph{row-by-row exploration} or a rigid quadrangulation $\mathbf{r} \in \mathcal{R}^{(p)}_n$ is a sequence
\begin{align}
    \mathbf{e}_0 \subset \mathbf{e}_1 \subset \cdots \subset \mathbf{e}_{n-1}=\mathbf{r},
\end{align}
where $\mathbf{e}_0$ is the minimal submap of $\mathbf{r}$, which is the rectangle of width $p$ and height $1$ with the top side being open, and where $\mathbf{e}_{i+1}$ is obtained from $\mathbf{e}_i$ by gluing one of the partial quadrangulations of Lemma~\ref{lem:minimalsubmaps} to an open side.
See Figure~\ref{fig:peeling} for an example.

As a consequence of Lemma~\ref{lem:minimalsubmaps} every rigid quadrangulation $\mathbf{r}$ admits a row-by-row exploration.
At each peeling step the number of rows not adjacent to an open side increases by one, and therefore it takes exactly $n-1$ steps to fully explore $\mathbf{r}$.
Moreover, if we require that at each step one selects the open side that is encountered first in counterclockwise order starting from the root, then the row-by-row exploration is seen to be uniquely determined by $\mathbf{r}$.

\subsection{Peeling exploration of colorful quadrangulations}\label{sec:peeling}

A \emph{colorful quadrangulation (of the disk) with holes} $\mathfrak{e}$ is like an ordinary colorful quadrangulation except that it comes with a distinguished sequence $h_1,\ldots,h_m$ of faces (not including the root face), called \emph{holes}, each of which carries a \emph{marked edge} in its contour that is oriented in clockwise direction. 
The holes can be of arbitrary even degree, but they are required to be simple and they cannot touch each other (i.e.\ each vertex can be adjacent to at most one corner of a hole).
We require the labels around the hole to take exactly two values and to decrease along its marked edge.

For a hole $h$ of degree $2k$ there is a natural operation of gluing a colorful quadrangulation $\mathfrak{u}$ of perimeter $2k$ inside $h$.
If the labels of the marked edge $e$ are $r+1, r$, then we shift uniformly the labels of $\mathfrak{u}$ by $r$ and identify the boundary of $\mathfrak{u}$ with the contour of $h$, matching $e$ with the root edge.
Note that the boundary of $\mathfrak{u}$ is not necessarily simple, so the gluing operation may identify several vertices on the hole $h$ with the same vertex in $\mathfrak{u}$.
We say $\mathfrak{e}$ is a submap of a colorful quadrangulation $\mathfrak{q}$, denoted $\mathfrak{e}\subset\mathfrak{q}$, if for each hole $h$ of degree $2k$ there exists a colorful quadrangulation $\mathfrak{u}$ of perimeter $2k$ such that gluing $\mathfrak{u}$ in $h$ results in $\mathfrak{q}$.

\begin{figure}
    \centering
    \includegraphics[width=.9\linewidth]{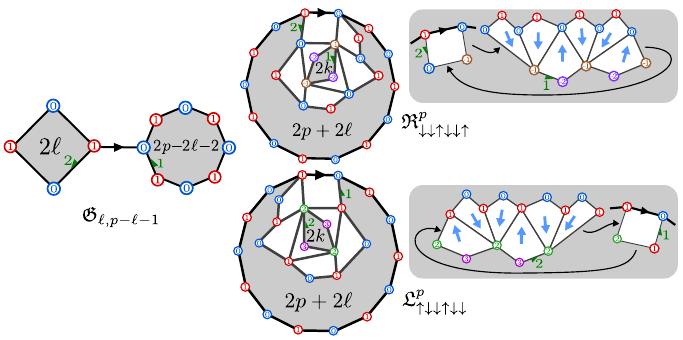}
    \caption{The minimal submaps of a colorful quadrangulation of perimeter $2p$. In these examples $p=7$, $k=2$, $\ell=4$. \label{fig:rigidquadtocolored}}
\end{figure}
\begin{lemma}\label{lem:minimalsubmapscq}
    Every colorful quadrangulation $\mathfrak{q}$ contains exactly one of the following submaps (see Figure~\ref{fig:rigidquadtocolored}):
    \begin{itemize}
        \item $\mathfrak{G}_{\ell,p-\ell-1}$ in which the root edge separates a hole $h_1$ of perimeter $2(p-\ell-1)$ and a hole $h_2$ of degree $2\ell$, for some $\ell=0,\ldots,p-1$. In the special cases $\ell=0$ or $\ell=p-1$ (or both) there is no hole and the root edge ends on a dangling vertex. The holes are marked at the unique clockwise oriented edge sharing the endpoint with the root edge (for $h_1$) or the origin with the root edge (for $h_2$).
        \item $\mathfrak{R}^p_\sigma$ where $\sigma$ is a finite word in the letters $\{\uparrow,\downarrow\}$.
        The root edge is adjacent to a quadrangle $f$ with labels $-1,0,1,0$. The pair of label-$(-1,0)$ sides are connected via a strip of quadrangles glued along label-$(-1,0)$ sides. Each $\downarrow$ (resp.\ $\uparrow$) corresponds to a quadrangle with labels $-1,0,1,0$ (resp.\ $-2,-1,0,-1$), in clockwise order along the strip. If the number of $\downarrow$ is $k\geq 0$ and the number of $\uparrow$ is $\ell\geq 0$, then there is a hole $h_1$ of degree $2k$ (surrounded by the strip) marked at the clockwise edge starting at the vertex of $f$ with label $-1$, and a hole $h_2$ of degree $2p+2\ell$ (exterior to the strip) marked at the remaining edge of $f$.
        \item $\mathfrak{L}^p_\sigma$ is similar to $\mathfrak{R}^p_\sigma$, but the root edge is adjacent to a quadrangle $f$ with labels $0,1,2,1$ and the label-$(1,2)$ sides are connected by a strip of quadrangles. Each $\downarrow$ (resp.\ $\uparrow$) corresponds to a quadrangle with labels $0,1,2,1$ (resp.\ $1,2,3,2$), in clockwise order along the strip. If the number of $\downarrow$ is $k\geq 0$ and the number of $\uparrow$ is $\ell\geq 0$, there is a hole $h_1$ of degree $2p+2\ell$ (exterior to the strip) marked at the remaining edge of $f$, and a hole $h_2$ of degree $2k$ (surrounded by the strip) marked at the clockwise edge ending at the vertex of $f$ with label $2$.
    \end{itemize}
\end{lemma}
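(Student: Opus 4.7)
The plan is to mirror the proof of Lemma~\ref{lem:minimalsubmaps}, replacing the case analysis on the two vertices adjacent to the base of a rigid quadrangulation by a case analysis on the face $f$ lying immediately to the right of the root edge of $\mathfrak{q}$. Since the root edge points from label $1$ to label $0$ and the root face sits on its left, there are exactly three possibilities: $f$ is the root face itself (so the root edge is a pinch, traversed in both directions by the outer boundary), or $f$ is an inner colorful quadrangle with cyclic labels $(0,1,2,1)$, or $f$ is an inner colorful quadrangle with cyclic labels $(-1,0,1,0)$. These alternatives are pairwise incompatible, which immediately yields mutual exclusivity of the three submap types of the lemma.

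In the first case, cutting $\mathfrak{q}$ along the pinched root edge separates it into the edge itself together with two sub-disks, one on each side. Each sub-disk inherits the colorful structure and has a boundary consisting of one side of the root edge together with a portion of the outer boundary of $\mathfrak{q}$. Writing their perimeters as $2\ell$ and $2(p-\ell-1)$, declaring each sub-disk to be a hole and marking it at the clockwise edge sharing the appropriate endpoint of the root edge yields the submap $\mathfrak{G}_{\ell,p-\ell-1} \subset \mathfrak{q}$. The boundary cases $\ell=0$ or $\ell=p-1$ correspond to a degenerate sub-disk of perimeter zero, manifesting as a dangling endpoint of the root edge.

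In the two remaining cases I would treat $f$ with labels $(0,1,2,1)$ explicitly; the case $(-1,0,1,0)$ is entirely analogous. Then $f$ has two edges carrying labels $\{1,2\}$, opposite to one another (the root edge is one of the two $\{0,1\}$-edges of $f$). Starting from the $\{1,2\}$-edge of $f$ opposite to the root, I trace a maximal strip of inner quadrangles glued successively along $\{1,2\}$-edges. Each quadrangle crossed is colorful and contains an incoming $\{1,2\}$-edge, so by direct inspection of the colorful patterns it must have labels either $(0,1,2,1)$ or $(1,2,3,2)$ and admits a unique outgoing $\{1,2\}$-edge on the opposite side. This makes the strip well-defined, and recording the types of quadrangles traversed produces the word $\sigma \in \{\uparrow,\downarrow\}^*$ described in the lemma.

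The main obstacle I expect is to show that this strip is finite and closes up precisely on the remaining $\{1,2\}$-edge of $f$, producing the ring structure of $\mathfrak{L}^p_\sigma$. Here the boundary condition of $\mathcal{C}^{(p)}$ is crucial: all boundary edges of $\mathfrak{q}$ carry labels in $\{0,1\}$, so the strip cannot terminate by reaching the outer boundary. A Jordan-curve argument inside the topological disk then forces the strip to close up at $f$ rather than on itself in the middle, which would create a non-planar identification. The closed strip together with $f$ separates $\mathfrak{q}$ into an inner sub-disk of perimeter $2k$, where $k$ is the number of $\downarrow$ letters in $\sigma$, and an outer sub-disk of perimeter $2p+2\ell$ containing the outer boundary of $\mathfrak{q}$; marking these as $h_2$ and $h_1$ at the prescribed clockwise edges of $f$ recovers $\mathfrak{L}^p_\sigma \subset \mathfrak{q}$.
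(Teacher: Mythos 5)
Your proposal is correct and follows essentially the same route as the paper: a case split on whether the root edge is a pinch of the root face or is adjacent to a colorful quadrangle $f$ with labels $(-1,0,1,0)$ or $(0,1,2,1)$, followed by tracing the dual path through quadrangles glued along $(\pm1$-shifted$)$ label-$(1,2)$ or $(-1,0)$ edges, which must close up at $f$ because each such quadrangle has exactly two such edges and the boundary of $\mathfrak{q}$ carries none. The closing-up argument you flag as the main obstacle is exactly the level-line argument the paper uses, so no gap remains.
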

\begin{proof}
    We consider the possibilities for the root edge of $\mathfrak{q}$. Either it is identified with another side of the root face, or it is adjacent to a quadrangle $f$. 
    In the first case, $\mathfrak{G}_{\ell,p-\ell-1} \subset \mathfrak{q}$ for some $\ell=0,\ldots,p-1$.
    In the second case, because of the colorful condition, the quadrangle $f$ must have labels $-1,0,1,0$ or $0,1,2,1$.
    If $f$ has labels $-1,0,1,0$, we may follow the path on the dual map that crosses only $(-1,0)$-edges of $\mathfrak{q}$ starting at $f$ in the direction of the $(-1,0)$-edge next to the root.
    Due to the colorful condition and the fact that the boundary does not contain $(-1,0)$-edges, this cycle must return to $f$ and passes only through quadrangles with labels $-1,0,1,0$ or $-2,-1,0,-1$.
    If we record the latter as a sequence of $\downarrow$ and $\uparrow$ respectively, we obtain a word $\sigma$.
    Hence, $\mathfrak{R}^p_\sigma \subset \mathfrak{q}$. 
    If instead $f$ has labels $0,1,2,1$, we may follow the path on the dual map that crosses only $(1,2)$-edges of $\mathfrak{q}$ starting at $f$ in the direction of the $(1,2)$-edge opposite to the root.
    In this case, the cycle must return to $f$ and passes only through quadrangles with labels $0,1,2,1$ or $1,2,3,2$.
    Recording these with a $\downarrow$ and $\uparrow$ in $\sigma$ respectively, we have $\mathfrak{L}^p_\sigma \subset \mathfrak{q}$. 
\end{proof}

\begin{figure}[h]
    \centering
    \includegraphics[width=\linewidth]{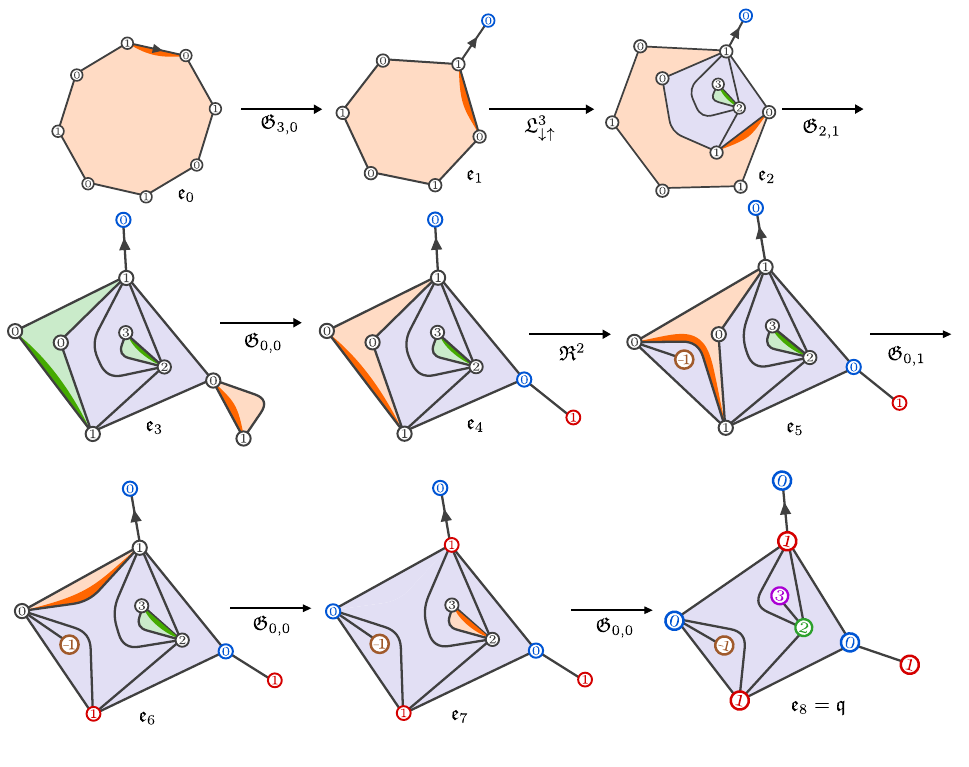}
    \caption{The peeling exploration of the colorful quadrangulation $\mathfrak{q}\in\mathcal{C}^{(4)}_9$ shown in the bottom concave corner, corresponding to row-by-row exploration of the rigid quadrangulation in Figure~\ref{fig:peeling}. The holes of $\mathfrak{e}_i$ are indicated in green and orange. Each peeling step corresponds to gluing a colorful quadrangulation with holes inside the orange hole.
    \label{fig:peelinglq}}
\end{figure}

The \emph{peeling exploration} or a colorful quadrangulation $\mathfrak{q} \in \mathcal{C}^{(p)}_n$ is the sequence
\begin{align}\label{eq:peelingexplor}
    \mathfrak{e}_0 \subset \mathfrak{e}_1 \subset \cdots \subset \mathfrak{e}_{n-1}=\mathfrak{q},
\end{align}
where $\mathfrak{e}_0$ is the minimal submap of $\mathfrak{q}$, consisting solely of a root face and a hole both of degree $2p$, and where $\mathfrak{e}_{i+1}$ is obtained from $\mathfrak{e}_i$ by
gluing one of the colorful quadrangulations with holes of Lemma~\ref{lem:minimalsubmaps} inside the first hole $h_1$ of $\mathfrak{e}_i$.
Note that the ordering of the holes is maintained appropriately, meaning that if $\mathfrak{e}_i$ has holes $h_1,h_2,\ldots$, and $\mathfrak{G}_{\ell,p-\ell-1}$ with holes $\tilde{h}_1,\tilde{h}_2$ is glued in the first hole $h_1$, then $\mathfrak{e}_{i+1}$ has its holes ordered as $\tilde{h}_1,\tilde{h}_2,h_2,\ldots$.
As a consequence of the lemma, this peeling exploration always exists and is unique.
See Figure~\ref{fig:peelinglq} for an example.
At each step the number of even edges that are not adjacent to a hole increases by one.

\subsection{First statement of the bijection}

We define $\Psi^{(p)}: \mathcal{R}^{(p)} \to \mathcal{C}^{(p)}$ as follows. 
Let $\mathbf{r} \in \mathcal{R}^{(p)}$ be a rigid quadrangulation and $\mathbf{e}_0 \subset \mathbf{e}_1 \subset \cdots \subset \mathbf{e}_{n-1}$ its row-by-row  exploration.
Set $\mathfrak{e}_0$ to be the minimal colorful quadrangulation with hole of perimeter $2p$.
Inductively, if $\mathbf{e}_{i+1}$ is obtained from $\mathbf{e}_{i}$ by gluing $\mathbf{G}_{k,\ell-k-1}$, $\mathbf{R}^\ell_\sigma$ or $\mathbf{L}^\ell_\sigma$ to the first open side of $\mathbf{e}_{i}$, then $\mathfrak{e}_{i+1}$ is obtained from $\mathfrak{e}_i$ by gluing the corresponding $\mathfrak{G}_{k,\ell-k-1}$, $\mathfrak{R}^\ell_\sigma$ or $\mathfrak{L}^\ell_\sigma$ into the first hole.
Then we let $\Psi^{(p)}(\mathbf{r}) = \mathfrak{e}_{n-1}$.

\begin{theorem}\label{thm:bijection}
    For each $p\geq 1$, the mapping $\Psi^{(p)} : \mathcal{R}^{(p)} \to \mathcal{C}^{(p)}$ is a bijection.
\end{theorem}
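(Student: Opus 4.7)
The plan is to run the two canonical explorations of Sections~\ref{sec:rowbyrow} and~\ref{sec:peeling} in parallel and show that they are coupled through a persistent bijection between their ``frontiers''. Concretely, I will maintain the following invariant throughout the joint exploration: at each step $i$ there exists an order-preserving bijection $\phi_i$ between the open sides of $\mathbf{e}_i$ (ordered counterclockwise from the root) and the holes of $\mathfrak{e}_i$ (ordered as in Section~\ref{sec:peeling}), matching an open side of size $\ell$ with a hole of degree $2\ell$. Once this invariant is established, the bijectivity of $\Psi^{(p)}$ follows essentially by bookkeeping.

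The base case $i=0$ is immediate: $\mathbf{e}_0$ has a single open top side of size $p$ and $\mathfrak{e}_0$ has a single hole of degree $2p$. For the inductive step, suppose $\phi_i$ exists. By Lemma~\ref{lem:minimalsubmaps}, the row-by-row exploration glues exactly one of $\mathbf{G}_{k,\ell-k-1}$, $\mathbf{R}^{\ell}_{\sigma}$ or $\mathbf{L}^{\ell}_{\sigma}$ to the first open side of $\mathbf{e}_i$ (of size $\ell$), and by definition of $\Psi^{(p)}$ the corresponding labeled piece is glued into the first hole of $\mathfrak{e}_i$ (which has degree $2\ell$ by the invariant). A direct inspection of Lemmas~\ref{lem:minimalsubmaps} and~\ref{lem:minimalsubmapscq} then shows that in each case the frontier update matches: $\mathbf{G}_{k,\ell-k-1}$ produces new open sides of sizes $k$ and $\ell-k-1$, to be matched with the new holes of $\mathfrak{G}_{k,\ell-k-1}$ of degrees $2k$ and $2(\ell-k-1)$, with degenerate cases ($k=0$ or $k=\ell-1$) deleting the corresponding element on both sides simultaneously; writing $a$ and $b$ for the number of $\downarrow$ and $\uparrow$ symbols in $\sigma$, the gluing $\mathbf{R}^{\ell}_{\sigma}$ produces an open top side of size $\ell+b$ and (for $a>0$) an open bottom side of size $a$, matching the holes of $\mathfrak{R}^{\ell}_{\sigma}$ of degrees $2(\ell+b)$ and $2a$; and $\mathbf{L}^{\ell}_{\sigma}$ is symmetric. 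The counterclockwise ordering convention on the rigid side is designed precisely to be compatible with the ordering of holes specified in the peeling exploration.

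With the invariant in hand, both explorations terminate simultaneously when the frontier is empty, and produce $\mathbf{r} \in \mathcal{R}^{(p)}$ and a labeled planar map $\mathfrak{q}$ respectively; since every face of $\mathfrak{q}$ other than the root face comes from one of the three types of colorful pieces, $\mathfrak{q}$ is a colorful quadrangulation in $\mathcal{C}^{(p)}$, so $\Psi^{(p)}$ is well-defined. An inverse is built symmetrically: given $\mathfrak{q} \in \mathcal{C}^{(p)}$, Lemma~\ref{lem:minimalsubmapscq} guarantees uniqueness of its peeling exploration, and reading off the types $\mathfrak{G}$, $\mathfrak{R}$, $\mathfrak{L}$ encountered and applying the corresponding rigid gluings yields a row-by-row exploration of a unique $\mathbf{r} \in \mathcal{R}^{(p)}$ with $\Psi^{(p)}(\mathbf{r}) = \mathfrak{q}$. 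Injectivity of $\Psi^{(p)}$ is automatic from the canonicity of the row-by-row exploration.

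The main obstacle is the case-by-case verification in the inductive step: for each of the three types of peeling events and for the relevant degenerate ranges of the parameters, one must check that the sizes of the new open sides (which are counts of rays created by the gluing on the rigid side) agree with the degrees of the new holes read off from the pictures in Lemma~\ref{lem:minimalsubmapscq}, and that the ordering of the frontier is preserved. Once this has been settled, the proof reduces to a formal matching between two exhaustive classifications of minimal submaps.
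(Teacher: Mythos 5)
Your proposal is correct and follows essentially the same route as the paper: the paper's proof likewise establishes by induction that the holes of $\mathfrak{e}_i$ match, in order, the open sides of $\mathbf{e}_i$ with half-perimeter equal to size, and then inverts $\Psi^{(p)}$ by running the (unique) peeling exploration of $\mathfrak{q}$ and replaying the corresponding rigid gluings. Your version merely spells out the case-by-case frontier bookkeeping that the paper leaves to the reader.
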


\begin{figure}[h]
    \centering
    \includegraphics[width=.6\linewidth]{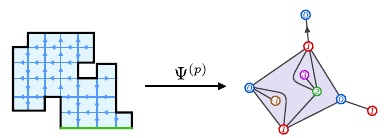}
    \caption{The result of relating the row-by-row exploration of Figure~\ref{fig:peeling} to the peeling exploration of Figure~\ref{fig:peelinglq}. In this example $p=4$.}
\end{figure}

\begin{proof}
    We first convince ourselves that the mapping is well-defined.
    It follows directly by induction on $i$ that $\mathfrak{e}_i$ is a colorful quadrangulation of perimeter $2p$ with holes whose half-perimeters match precisely the sizes of open sides of $\mathbf{e}_i$ in order.
    Since $\mathbf{e}_{n-1} = \mathbf{r}$ has no open sides, $\mathfrak{q} = \Psi^{(p)}(\mathbf{r}) = \mathbf{e}_{n-1}$ is a colorful quadrangulation without holes, as desired.
    Moreover, by construction, $\mathfrak{e}_0 \subset \mathfrak{e}_1 \subset \cdots \subset \mathfrak{q}$ is the peeling exploration of $\mathfrak{q}$.
    Therefore, the inverse mapping is constructed by a similar inductive procedure: if $\mathfrak{e}_{i+1}$ is obtained from $\mathfrak{e}_{i}$ by gluing $\mathfrak{G}_{k,\ell-k-1}$, $\mathfrak{R}^\ell_\sigma$ or $\mathfrak{L}^\ell_\sigma$ inside the first hole of $\mathfrak{e}_{i}$, then $\mathbf{e}_{i+1}$ is obtained from $\mathbf{e}_i$ by gluing the corresponding $\mathbf{G}_{k,\ell-k-1}$, $\mathbf{R}^\ell_\sigma$ or $\mathbf{L}^\ell_\sigma$ to the first open side.
    Then the rigid quadrangulation $\mathbf{r}$ is taken to be $\mathbf{e}_{n-1}$.
    It should be clear that this is again a well-defined mapping and that it is the inverse to $\Psi^{(p)}$.
\end{proof}

\subsection{Direct description of the bijection}

The bijection just described is not unique, because different choices of marked edges in the holes of the minimal submaps of Lemma~\ref{lem:minimalsubmapscq} would have led to different bijections.
Readers familiar with peeling processes will recognize this as the freedom in choosing a \emph{peeling algorithm} without affecting the enumeration (see \cite{Curien_Peeling_2023} for a comprehensive overview).
However, as we will see, our particular choice of peeling algorithm leads to a bijection that respects the reflection symmetry of rigid quadrangulations. 

\begin{proposition}\label{prop:bijectionequivalence}
For each $p\geq 1$, we have $\Psi^{\mathrm{b}} |_{\mathcal{R}^{(p)}} = \Psi^{(p)}$.
\end{proposition}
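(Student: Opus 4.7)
The plan is to prove the equivalence by induction on the step index of the row-by-row exploration, showing that the edge-drawing construction of $\Psi^{\mathrm{b}}$ on the double of $\mathbf{r}$ is perfectly compatible with the step-by-step peeling construction of $\Psi^{(p)}$. To do this, I would first establish a \emph{localization} property of the edge-drawing algorithm behind $\Psi$: each edge of $\Psi(\mathbf{r})$ is associated to a single row or column, and its drawn path in the double is confined to the strip determined by that row/column plus the pair of boundary sides it connects. Consequently, given a partial quadrangulation $\mathbf{e}_i \subset \mathbf{r}$, the rows and columns of $\mathbf{r}$ split into three classes: (a) those entirely contained in the ``closed'' portion of $\mathbf{e}_i$ (not touching any open side), whose corresponding edges of $\Psi(\mathbf{r})$ are already fully drawn inside the double of $\mathbf{e}_i$; (b) those incident to an open side, whose associated edges are ``dangling'' into the unexplored region; and (c) those lying entirely in the complement of $\mathbf{e}_i$.

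Using this, I would define $\mathfrak{e}'_i \subset \Psi^{\mathrm{b}}(\mathbf{r})$ as the colorful quadrangulation with holes whose non-root faces correspond (via the dictionary of Theorem~\ref{thm:dictionary}) to the concave corners of $\mathbf{e}_i$, whose edges are precisely those from class (a), and whose holes replace the regions of $\Psi^{\mathrm{b}}(\mathbf{r})$ encoding the open sides of $\mathbf{e}_i$ in order. The base case $\mathfrak{e}'_0 = \mathfrak{e}_0$ is immediate, since $\mathbf{e}_0$ has only the base and one open side of size $p$, so both reduce to a root face of degree $2p$ glued to a single hole of degree $2p$. For the inductive step, I would perform a case analysis across the three types of minimal submaps from Lemma~\ref{lem:minimalsubmaps}: for $\mathbf{G}_{k,\ell-k-1}$, $\mathbf{R}^\ell_\sigma$, and $\mathbf{L}^\ell_\sigma$, one runs the edge-drawing algorithm locally on the glued piece (using the transverse/parallel alternation rule between the front and back of the double) and observes directly that the rows and columns newly completed inside the glued piece draw exactly the edges prescribed by the matching $\mathfrak{G}_{k,\ell-k-1}$, $\mathfrak{R}^\ell_\sigma$, or $\mathfrak{L}^\ell_\sigma$ of Lemma~\ref{lem:minimalsubmapscq}.

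The main obstacle is bookkeeping: I need to verify that the \emph{marked edges} of the holes created after each peeling step match the clockwise-decreasing convention in Lemma~\ref{lem:minimalsubmapscq}. This reduces to computing the turning numbers of the corners immediately to either side of each open side of $\mathbf{e}_i$; since an open side lies between a concave and a convex or concave corner, and the turning number increments predictably, the labels along the hole boundary are forced, and the required marked edge is identified with the unique clockwise-oriented edge of the right label pair at the prescribed vertex. Once the three cases are verified and the hole-marking convention is seen to be preserved, induction gives $\mathfrak{e}'_i = \mathfrak{e}_i$ for all $i$; in particular $\Psi^{\mathrm{b}}(\mathbf{r}) = \mathfrak{e}'_{n-1} = \mathfrak{e}_{n-1} = \Psi^{(p)}(\mathbf{r})$, proving the proposition.
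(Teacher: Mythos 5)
Your overall strategy---an induction that matches the two explorations step by step, with a case analysis over the three submap types of Lemma~\ref{lem:minimalsubmaps}---is the same skeleton the paper uses (it proves the stronger Proposition~\ref{prop:extendbijequivalence} by induction on the number of edges, decomposing $\mathbf{r}$ into its minimal submap plus the pieces $\mathbf{u}_j$ glued to the open sides). However, there is a genuine gap in the two supporting claims on which your induction rests. First, the ``localization'' property is false as stated: the drawn path of an edge of $\Psi^{\mathrm{b}}(\mathbf{r})$ is \emph{not} confined to the strip of its row/column plus a pair of boundary sides. By construction the extension alternates transverse and parallel segments each time it meets the boundary, so it can zigzag across arbitrarily many sides of $\mathbf{r}$ before terminating at a convex corner (see Figure~\ref{fig:edgedrawing}a). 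Consequently your class (a) is also mischaracterized: a row that is no longer adjacent to any open side of $\mathbf{e}_i$ (so that the corresponding even edge of $\mathfrak{e}_i$ already has both abstract endpoints determined in the peeling) can still have its \emph{drawn} path exit $\mathbf{e}_i$ through an open side, because the convex corners it must terminate on may be revealed only much later in the exploration. Already at step $1$ with a submap $\mathbf{R}^p_\sigma$, the root edge of $\mathfrak{q}$ corresponds to the row above the base, yet both ends of its drawn path turn upward and continue into the unexplored region.

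This is precisely the difficulty that forces the paper to strengthen the induction hypothesis: one must track, for each strand of a partially drawn edge crossing an open side, exactly which vertex of the piece glued there it will terminate on. The paper encodes this via the auxiliary paths $\vec{\gamma}_i(\mathbf{r}),\cev{\gamma}_i(\mathbf{r})$ emanating from the base edges and the invariant that they land on the boundary vertices $\vec{v}_i(\mathfrak{q}),\cev{v}_i(\mathfrak{q})$ in order (part (ii) of Proposition~\ref{prop:extendbijequivalence}); it is this invariant, applied to the glued pieces $\mathbf{u}_j$, that lets one identify the endpoints such as $\vec{v}_{n^{\downarrow}+1}(\mathfrak{u}_1)$ or $\cev{v}_{n^{\uparrow}_{\mathrm{L}}}(\mathfrak{u}_1)$ in the case analysis. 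Your proposal gestures at the right bookkeeping for the \emph{marked edges} of the holes via turning numbers, but without an analogous invariant specifying where the dangling drawn strands attach around each hole, the verification that gluing $\mathbf{G}_{k,\ell-k-1}$, $\mathbf{R}^\ell_\sigma$ or $\mathbf{L}^\ell_\sigma$ produces exactly the corresponding $\mathfrak{G}_{k,\ell-k-1}$, $\mathfrak{R}^\ell_\sigma$ or $\mathfrak{L}^\ell_\sigma$ cannot be completed. To repair the argument, either adopt the paper's strengthened statement, or formulate and carry through your own invariant describing the cyclic arrangement and eventual endpoints of the dangling strands along each open side.
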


In fact, we will prove a slightly stronger result because $\mathfrak{q}=\Psi^{\mathrm{b}}(\mathbf{r})$ provides not just a planar map but also a canonical embedding in the double of $\mathbf{r}$. 
In particular, we can locate the vertices in the boundary of $\Psi^{\mathrm{b}}(\mathbf{r})$ starting from the edges in the base of $\mathbf{r}$. 
Let us order these edges from right to left starting from the root.
Then to the $i$th edge, $i = 1,\ldots,p$, we associate a pair of paths $\vec{\gamma}_i(\mathbf{r}), \cev{\gamma}_i(\mathbf{r})$, both starting in the interior of that edge and ending on a convex corner of $\mathbf{r}$.
To this end we consider the column of $\mathbf{r}$ that ends on the $i$th edge.
The path $\vec{\gamma}_i(\mathbf{r})$ is obtained by extending the middle section to end on a convex vertex by alternating the right and upward direction whenever hitting the boundary.
In other words, this corresponds to a one-sided version of the edge drawing algorithm discussed in Section~\ref{sec:bijectionintro} in which the column is transversally oriented to the right.
Similarly, $\cev{\gamma}_i(\mathbf{r})$ is the path corresponding to this column with transverse orientation to the left.
It should be clear that the paths $\vec{\gamma}_i(\mathbf{r})$ and $\cev{\gamma}_i(\mathbf{r})$ are disjoint if we start $\vec{\gamma}_i(\mathbf{r})$ just to the right of $\cev{\gamma}_i(\mathbf{r})$, see Figure~\ref{fig:doublepaths}b.

\begin{figure}[t]
    \centering
    \includegraphics[width=.8\linewidth]{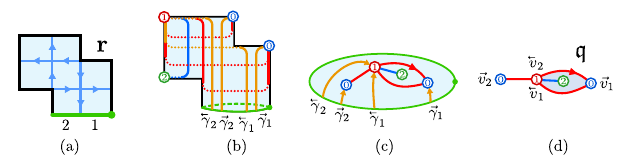}
    \caption{\label{fig:doublepaths} (a) The rigid quadrangulation $\mathbf{r}\in\mathcal{R}_2$ with the right-to-left ordering of the edges in its base. (b) We think of the double of $\mathbf{r}$ as a topological disk with boundary corresponding to the base. The paths $\vec{\gamma}_i(\mathbf{r})$ and $\vec{\gamma}_i(\mathbf{r})$ are indicated in orange. (c) A topologically equivalent illustration of the double. (d) The colorful quadrangulation $\mathfrak{q} = \Psi^{(2)}(\mathbf{r})$ with the vertices $\vec{v}_i(\mathfrak{q})$ and $\vec{v}_i(\mathfrak{q})$ along its boundary indicated.}
\end{figure}

Let us further denote the vertices encountered in clockwise order around the boundary of $\mathfrak{q}$ by 
\begin{align*}
\vec{v}_1(\mathfrak{q}), \cev{v}_1(\mathfrak{q}), \vec{v}_2(\mathfrak{q}), \cev{v}_2(\mathfrak{q}),\ldots,\vec{v}_p(\mathfrak{q}), \cev{v}_p(\mathfrak{q}),
\end{align*}
such that the root points from $\cev{v}_p(\mathfrak{q})$ to $\vec{v}_1(\mathfrak{q})$.
In other words, $\vec{v}_i(\mathfrak{q})$ is the $i$th label-$0$ vertex and $\cev{v}_i(\mathfrak{q})$ is the $i$th label-$1$ vertex encountered in clockwise order starting from the root, see Figure~\ref{fig:doublepaths}d.

\begin{proposition}\label{prop:extendbijequivalence}
    For each $p\geq 1$ and $\mathbf{r}\in\mathcal{R}^{(p)}$, the map $\mathfrak{q}=\Psi^{\mathrm{b}}(\mathbf{r})$ drawn on the double of $\mathbf{r}$ satisfies
    \begin{enumerate}[label = (\roman*)]
        \item $\mathfrak{q}$ is isomorphic as planar map to $\Psi^{(p)}(\mathbf{r})$;
        \item the paths $\vec{\gamma}_1(\mathbf{r}), \cev{\gamma}_1(\mathbf{r}), \vec{\gamma}_2(\mathbf{r}), \cev{\gamma}_2(\mathbf{r}),\ldots,\vec{\gamma}_p(\mathbf{r}), \cev{\gamma}_p(\mathbf{r})$ are mutually disjoint and disjoint from $\mathfrak{q}$ (except at their endpoints) and end on the vertices $\vec{v}_1(\mathfrak{q}), \cev{v}_1(\mathfrak{q}), \vec{v}_2(\mathfrak{q}), \cev{v}_2(\mathfrak{q}),\ldots,\vec{v}_p(\mathfrak{q}), \cev{v}_p(\mathfrak{q})$.
    \end{enumerate}
\end{proposition}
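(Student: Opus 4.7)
The plan is to prove (i) and (ii) simultaneously by induction along the row-by-row exploration $\mathbf{e}_0 \subset \mathbf{e}_1 \subset \cdots \subset \mathbf{e}_{n-1} = \mathbf{r}$ of Section~\ref{sec:rowbyrow}, coupled with the peeling exploration $\mathfrak{e}_0 \subset \mathfrak{e}_1 \subset \cdots \subset \mathfrak{e}_{n-1} = \Psi^{(p)}(\mathbf{r})$. I would first extend the construction of the paths $\vec{\gamma}_j, \cev{\gamma}_j$ to any partial rigid quadrangulation $\mathbf{e}$: starting from the middle of the $j$-th base edge the paths follow the column of $\mathbf{e}$ with the one-sided alternating rule, but may now terminate either at a convex corner or at a straight vertex on an open side (if the corresponding column exits through an open side before reaching a convex corner). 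Similarly, for each open side $s$ of $\mathbf{e}$ of size $\ell$, one defines $2\ell$ analogous paths launched from the rows of $\mathbf{e}$ ending on $s$, proceeding into the explored region.

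The inductive invariant is that $\mathfrak{e}_i$ can be realized as a labeled map drawn on the double of $\mathbf{e}_i$ such that its inner edges (those not incident to the root face or a hole) coincide with the output of the edge-drawing algorithm of Section~\ref{sec:bijectionintro} applied to the rows and columns of $\mathbf{e}_i$ that are not adjacent to any open side, and such that the $2p$ path endpoints along the base appear in clockwise cyclic order at the $2p$ label-alternating boundary vertices of the root face of $\mathfrak{e}_i$, while for each open side of size $\ell$ the $2\ell$ path endpoints along it coincide (in cyclic order and with matching labels $0,1,0,1,\ldots$) with the boundary vertices of the corresponding hole of $\mathfrak{e}_i$; all paths are moreover mutually disjoint in the double. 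The base case $i=0$ is immediate: $\mathbf{e}_0$ is a $p\times 1$ rectangle and $\mathfrak{e}_0$ is the root face glued along its whole boundary to a single hole of perimeter $2p$, with the $2p$ paths from the base crossing the $p$ columns vertically to hit the open top side.

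For the inductive step I would treat separately the three cases of Lemma~\ref{lem:minimalsubmaps}, and in each case verify by direct inspection of the local pictures of Figures~\ref{fig:peelingsteps} and~\ref{fig:rigidquadtocolored} that gluing $\mathbf{G}_{k,\ell-k-1}$, $\mathbf{R}^\ell_\sigma$ or $\mathbf{L}^\ell_\sigma$ onto the first open side of $\mathbf{e}_i$ corresponds on the colorful side to inserting $\mathfrak{G}_{k,\ell-k-1}$, $\mathfrak{R}^\ell_\sigma$ or $\mathfrak{L}^\ell_\sigma$ inside the first hole of $\mathfrak{e}_i$, that the new inner edges are exactly those produced by the edge-drawing rule on the freshly added rows and columns, and that the updated path endpoints on the new open sides land on the boundary vertices of the new holes in the prescribed cyclic order with the correct labels (the label matching is built in, since the turning-number labeling is precisely what defines the vertex labels in $\Psi$). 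Applying the invariant at $i=n-1$, when the open sides and holes both disappear, yields Proposition~\ref{prop:bijectionequivalence} as (i) together with (ii).

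The main obstacle will be the topological bookkeeping in the $\mathbf{R}^\ell_\sigma$ and $\mathbf{L}^\ell_\sigma$ cases, since a whole strip of quadrangles is inserted on both sides and the orientations of the $|\sigma|+1$ new vertical rays are encoded by $\sigma$: one must check that the edge-drawing algorithm applied to these new rays produces exactly the strip of quadrangles of $\mathfrak{R}^\ell_\sigma$ or $\mathfrak{L}^\ell_\sigma$ with the correct labels, that the newly introduced hole is placed inside the strip with the correct marked edge, and that the newly introduced path endpoints appear in the right cyclic order along both the new top open side (matching the outer hole of $\mathfrak{R}^\ell_\sigma$ or $\mathfrak{L}^\ell_\sigma$) and the new bottom open side (matching the inner hole). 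The degenerate subcases ($k=0$, $\ell=0$ or $\ell = p-1$), where an open side collapses to a convex corner and the corresponding hole collapses to a dangling vertex, require separate but easier verification.
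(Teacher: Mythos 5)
Your plan is correct in substance and rests on the same two pillars as the paper's proof: the coupling of the row-by-row and peeling explorations through the three minimal-submap types of Lemma~\ref{lem:minimalsubmaps} and Lemma~\ref{lem:minimalsubmapscq}, and a case-by-case check that the edge-drawing rule reproduces the glued pieces $\mathfrak{G}_{k,\ell}$, $\mathfrak{R}^\ell_\sigma$, $\mathfrak{L}^\ell_\sigma$ while the paths land on the correct vertices. Where you genuinely differ is the organization of the induction. The paper inducts on the number of edges of $\mathfrak{q}$: it performs only the \emph{first} exploration step, decomposes $\mathbf{r}$ into the minimal submap $\mathbf{e}$ together with the \emph{complete} rigid quadrangulations $\mathbf{u}_j$ glued to its open sides, applies the full proposition as induction hypothesis to each $\mathbf{u}_j$, and splices the paths $\vec{\gamma}_i(\mathbf{r}),\cev{\gamma}_i(\mathbf{r})$ through the first step into the paths $\vec{\gamma}_j(\mathbf{u}_1),\cev{\gamma}_j(\mathbf{u}_2)$ of the pieces. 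You instead run the exploration forward and carry a loop invariant on the partial pair $(\mathbf{e}_i,\mathfrak{e}_i)$. Both schemes demand exactly the same local picture-checking; the paper's buys a cleaner induction hypothesis (it is literally the statement being proved), while yours requires formulating and re-verifying at every step a more elaborate invariant about colorful quadrangulations with holes drawn on doubles of partial rigid quadrangulations, at the benefit of never having to splice paths across a recursive decomposition. One point in your invariant needs repair: the boundary vertices of a hole of $\mathfrak{e}_i$ should be matched with the points where the partial paths from the base cross the corresponding open side on their way into the \emph{not-yet-explored} region (they sit on the open side and migrate inward when the hole is filled at a later step), not with endpoints of auxiliary paths ``proceeding into the explored region'' as you write; as stated, those backward paths do not correspond to anything in the gluing rule for holes. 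With that correction the invariant closes, and evaluating it at $i=n-1$, when all open sides and holes have disappeared, gives (i) and (ii).
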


\begin{proof}[Proof of Proposition~\ref{prop:bijectionequivalence}]
    This is just (i) of Proposition~\ref{prop:extendbijequivalence}.
\end{proof}

\noindent
The advantage of this stronger statement is that we can prove it by induction.

\begin{proof}[Proof of Proposition~\ref{prop:extendbijequivalence}]
    We perform induction on the number of edges of $\mathfrak{q}$ while $p\geq 1$ remains arbitrary. 
    For the initial case, there is a unique $\mathbf{r}$ for which $\mathfrak{q}$ has a single edge, and the statement is easily verified by inspection (Figure~\ref{fig:doublepaths-basecase}).
    \begin{figure}[h!]
        \centering
        \includegraphics[width=.35\linewidth]{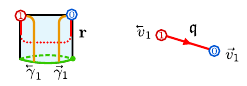}
        \caption{\label{fig:doublepaths-basecase}}
    \end{figure}

    For the induction step, let $\mathfrak{q}=\Psi^{\mathrm{b}}(\mathbf{r})$ and assume that the statement of the proposition holds for every rigid quadrangulation $\mathbf{r}'$ such that $\Psi^{\mathrm{b}}(\mathbf{r}')$ has fewer edges than $\mathfrak{q}$.
    Let $\mathbf{e}$ be the unique submap of Lemma~\ref{lem:minimalsubmaps} such that $\mathbf{e} \subset \mathbf{r}$ and let $\mathfrak{e}$ be the corresponding submap of Lemma~\ref{lem:minimalsubmapscq}.
    The rigid quadrangulation $\mathbf{r}$ is recovered from $\mathbf{e}$ by gluing a rigid quadrangulation $\mathbf{u}_j$ of base $\ell_j$ to the $j$th open side of $\mathbf{e}$ for the index $j$ ranging over the (at most two) open sides.
    Let $\mathfrak{u}_j = \Psi^{\mathrm{b}}(\mathbf{u}_j)$ be the map drawn on the double of $\mathbf{u}_j$.
    By Theorem~\ref{thm:bijection}, $\mathfrak{e} \subset \Psi^{(p)}(\mathbf{r})$ and the maps to be glued in the holes of $\mathfrak{e}$ to recover $\Psi^{(p)}(\mathbf{r})$ are $\Psi^{(\ell_j)}(\mathbf{u}_j)$.
    Since $\mathfrak{u}_j$ has fewer edges than $\mathfrak{q}$, by our assumption $\mathfrak{u}_j$ agree with $\Psi^{(\ell_j)}(\mathbf{u}_j)$ as planar map.
    We need to show that $\mathfrak{q}$ corresponds to the appropriate gluing of $\mathfrak{u}_j$ inside the holes of $\mathfrak{e}$, and that the paths of (ii) can be drawn in a disjoint fashion.
    To this end we inspect $\mathbf{e}$ case by case.

    \paragraph{Case $\mathbf{e} = \mathbf{G}_{\ell_1,\ell_2}$ for $p = \ell_1 + \ell_2 +1$:} 
    \begin{figure}[h!]
        \centering
        \includegraphics[width=\linewidth]{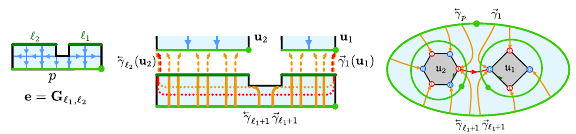}
        \caption{\label{fig:doubleGstep}}
    \end{figure}
    The root edge of $\mathfrak{q}$ corresponds to the row above the base of $\mathbf{r}$ (shown in red in Figure~\ref{fig:doubleGstep}).
    It turns upward on both ends of the row.
    In the right direction, if $\ell_1 = 0$, it ends on the top-right corner of $\mathbf{e}$ corresponding to $\vec{v}_1(\mathfrak{q})$, otherwise it traces $\vec{\gamma}_1(\mathbf{u}_1)$ and thus ends at $\vec{v}_1(\mathfrak{u}_1)$. 
    In the left direction, if $\ell_2 = 0$, it ends on the top-left corner of $\mathbf{e}$ corresponding to $\cev{v}_p(\mathfrak{q})$, otherwise it traces $\cev{\gamma}_{\ell_2}(\mathbf{u}_2)$ and thus ends at $\cev{v}_{\ell_2}(\mathfrak{u}_2)$.
    This agrees precisely with $\Psi^{(p)}(\mathbf{r})$ showing that it is isomorphic to $\mathfrak{q}$.
    We can trace the paths $\vec{\gamma}_i(\mathbf{r}),\cev{\gamma}_i(\mathbf{r})$ explicitly:
        \begin{itemize}
            \item For $i = 1, \ldots, \ell_1$, the path $\vec{\gamma}_i(\mathbf{r})$ continues as $\vec{\gamma}_i(\mathbf{u}_1)$ and thus ends on $\vec{v}_i(\mathfrak{u}_1)$, while $\cev{\gamma}_i(\mathbf{r})$ continues as $\cev{\gamma}_i(\mathbf{u}_1)$ and ends on $\cev{v}_i(\mathfrak{u}_1)$.
            \item For $i = 1, \ldots, \ell_2$, the path $\vec{\gamma}_{\ell_1+i+1}(\mathbf{r})$ continues as $\vec{\gamma}_i(\mathbf{u}_2)$ ending on $\vec{v}_i(\mathfrak{u}_2)$, while $\cev{\gamma}_{\ell_1+i+1}(\mathbf{r})$ continues as $\cev{\gamma}_i(\mathbf{u}_2)$ ending at $\cev{v}_i(\mathfrak{u}_2)$.
            \item The path $\vec{\gamma}_{\ell_1+1}(\mathbf{r})$ turns right and, if $\ell_1>0$, upwards again tracing $\vec{\gamma}_1(\mathbf{u}_1)$ thus ending on $\vec{v}_1(\mathfrak{u}_1)$. If $\ell_1 = 0$, $\vec{\gamma}_{\ell_1+1}(\mathbf{r})$ ends on the top-right corner $\vec{v}_1(\mathfrak{q})$ of $\mathbf{e}$.
            \item The path $\cev{\gamma}_{\ell_1+1}(\mathbf{r})$ turns left and, if $\ell_2>0$, upwards again tracing $\cev{\gamma}_{\ell_2}(\mathbf{u}_2)$ thus ending on $\cev{v}_{\ell_2}(\mathfrak{u}_2)$. If $\ell_2 = 0$, $\cev{\gamma}_{\ell_1+1}(\mathbf{r})$ ends on the top-left corner $\cev{v}_p(\mathfrak{q})$ of $\mathbf{e}$.
        \end{itemize}
    We observe that for each $i=1,\ldots,p$, $\vec{\gamma}_i(\mathbf{r})$ ends at $\vec{v}_i(\mathfrak{q})$ and $\cev{\gamma}_i(\mathbf{r})$ ends at $\cev{v}_i(\mathfrak{q})$. Moreover, the paths are disjoint.

    \begin{figure}[h]
    \centering
    \includegraphics[width=\linewidth]{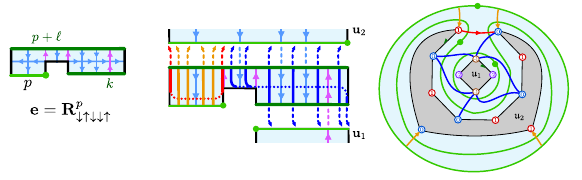}

    \vspace{-4mm}
    \caption{Example of a step $\mathbf{R}_{\sigma}^p$ with $n^\uparrow = 2$, $n^\downarrow = 3$.\label{fig:doubleRstep} }
    \end{figure} 
    \paragraph{Case $\mathbf{e} = \mathbf{R}_{\sigma}^p$:} Let $n^{\uparrow}$ and $n^{\downarrow}$ be the number of $\uparrow$ respectively $\downarrow$ in the word $\sigma$, see Figure~\ref{fig:doubleRstep} 
    The (even) root edge of $\mathfrak{q}$ corresponds again to the row above the base.
    In the right direction, it turns up and traces $\vec{\gamma}_{n^{\downarrow}+1}(\mathbf{u}_1)$ ending at $\vec{v}_{n^{\downarrow}+1}(\mathfrak{u}_1)$, while in the left direction, it turns up and traces $\cev{\gamma}_{p+n^{\downarrow}}(\mathbf{u}_1)$ ending at $\cev{v}_{p+n^{\downarrow}}(\mathfrak{u}_1)$.
    In addition to the edges of $\mathfrak{u}_1$ and $\mathfrak{u}_2$, $\mathfrak{q}$ contains $\ell+k+1$ odd edges corresponding to the columns of $\mathbf{e}$ that do not end on the base.
    For each such column $\chi$ an edge is drawn as follows.
    Let $n^\uparrow_{\mathrm{L}}$, $n^\uparrow_{\mathrm{R}}$ be the number of $\uparrow$ to the left respectively right of the $\chi$, and definite $n^\downarrow_{\mathrm{L}}$, $n^\downarrow_{\mathrm{R}}$ analogously.
    Then upwards the edge traces $\vec{\gamma}_{n^\downarrow_{\mathrm{R}}+1}(\mathbf{u}_2)$ ending on $\vec{v}_{n^\downarrow_{\mathrm{R}}+1}(\mathfrak{u}_2)$.
    If $n^\uparrow = 0$, downwards it turns right and ends on the bottom-right corner of $\mathbf{e}$.
    Otherwise, if $n^\uparrow_{\mathrm{L}} = 0$ and $n^\uparrow_{\mathrm{R}} > 0$, it turns right, hits the right boundary, turns down again and traces $\cev{\gamma}_{n^\uparrow}(\mathbf{u}_1)$ ending on $\cev{v}_{n^\uparrow}(\mathfrak{u}_1)$ (note that $\mathbf{u}_1$ was rotated by $180$ degrees before gluing to $\mathbf{e}$!).
    Finally, if $n^\uparrow_{\mathrm{L}} > 0$ and $n^\uparrow_{\mathrm{R}} > 0$, then the edge continues downward tracing $\cev{\gamma}_{n^\uparrow_{\mathrm{L}}}(\mathbf{u}_1)$ ending on $\cev{v}_{n^\uparrow_{\mathrm{L}}}(\mathfrak{u}_1)$.
    One may check that this corresponds precisely to the edges in $\Psi^{(p)}(\mathbf{r})$ resulting from gluing $\mathfrak{u}_i$ in the holes of $\mathfrak{R}_\sigma^p$, showing that $\Psi^{(p)}(\mathbf{r})$ is isomorphic to $\mathfrak{q}$.

    For $i=1,\ldots,p$, the path $\vec{\gamma}_i(\mathbf{r})$ continues upwards and trace $\vec{\gamma}_{i+n^\downarrow+1}(\mathbf{u}_2)$ ending on $\vec{v}_{i+n^\downarrow+1}(\mathfrak{u}_2) = \vec{v}_i(\mathfrak{q})$. Similarly, $\cev{\gamma}_i(\mathbf{r})$ traces $\cev{\gamma}_{i+n^\downarrow+1}(\mathbf{u}_2)$ and ends on $\cev{v}_{i+n^\downarrow+1}(\mathfrak{u}_2) = \cev{v}_i(\mathfrak{q})$.

    \begin{figure}[h]
    \centering
    \includegraphics[width=\linewidth]{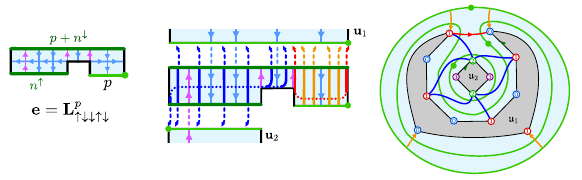}
    \caption{A step $\mathbf{L}^p_{\sigma}$ that is the mirrored version of Figure~\ref{fig:doubleRstep}.\label{fig:doubleLstep} }
    \end{figure} 
    \paragraph{Case $\mathbf{e} = \mathbf{L}_{\sigma}^p$:} This situation is completely analogous to $\mathbf{L}_{\sigma}^p$, but mirrored horizontally and one has to reverse any orientation-dependent ordering, see Figure~\ref{fig:doubleLstep}. We leave it to the reader to verify, with help of the figure below, that again $\Psi^{(p)}(\mathbf{r}) = \mathfrak{q}$ and that the paths $\vec{\gamma}_i(\mathbf{r}),\cev{\gamma}_i(\mathbf{r})$ end on the appropriate vertices $\vec{v}_{i}(\mathfrak{q}),\cev{v}_{i}(\mathfrak{q})$.

    This completes the proof of the proposition by induction.
\end{proof}

\subsection{Zipping the root}\label{sec:zipping}
In particular, we have established Theorem~\ref{thm:basebijection} for $k = 1$ and $\ell_1 = p \geq 1$. 
As promised, let us deduce Theorem~\ref{thm:rootedbijection} from the special case $k = \ell_1 = 1$.

\begin{figure}[h]
    \centering
    \includegraphics[width=.9\linewidth]{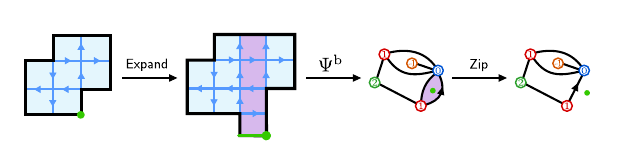}
    \caption{\label{fig:zipping}}
\end{figure}

\begin{proof}[Proof of Theorem~\ref{thm:rootedbijection}]
    Let us denote by $\hat{\mathcal{R}}^{(1)} \subset \mathcal{R}^{(1)}$ the base-$1$ rigid quadrangulations in which the top of the quadrangle above the base is an inner edge oriented to the right.
    Let also $\hat{\mathcal{C}}^{(1)} \subset \mathcal{C}^{(1)}$ be the colorful quadrangulations of perimeter $2$ in which the root edge is adjacent to a quadrangle with labels $(0,1,2,1)$.
    We will verify that
    \begin{align*}
        \Psi = \mathsf{Zip} \circ \Psi^{\mathrm{b}}\big|_{\hat{\mathcal{R}}^{(1)}} \circ\mathsf{Expand},
    \end{align*}
    where $\mathsf{Zip} : \hat{\mathcal{C}}^{(1)} \to \mathcal{C}$ and $\mathsf{Expand} : \mathcal{R} \to \hat{\mathcal{R}}^{(1)} $ are two natural bijections defined as follows (see Figure~\ref{fig:zipping}).
    For $\mathfrak{q}\in\hat{\mathcal{C}}^{(1)}$, we define $\mathsf{Zip}(\mathfrak{q})$ to be the colorful quadrangulation obtained from $\mathfrak{q}$ by merging both edges on the boundary into a single root edge.
    Given a rigid quadrangulation $\mathbf{r} \in \mathcal{R}$, we define $\mathsf{Expand}(\mathbf{r})$ to be the rigid quadrangulation obtained by splitting the column to the left of the root corner into two columns, attaching a new quadrangle at the bottom, and taking the bottom-right corner to be the new root.
    By construction, the top of the quadrangle above the base is oriented to the right.
    That $\mathsf{Zip}$ and $\mathsf{Expand}$ are bijections can be readily checked. 
    The subsets $\hat{\mathcal{R}}^{(1)} \subset \mathcal{R}^{(1)}$ and $\hat{\mathcal{C}}^{(1)} \subset \mathcal{C}^{(1)}$ can be understood as the rigid quadrangulations, respectively colorful quadrangulations, for which the first step in the exploration is of type $\mathbf{L}_\sigma^1$, respectively $\mathcal{L}_\sigma^1$.
    By Theorem~\ref{thm:bijection} and Proposition~\ref{prop:bijectionequivalence}, $\Psi^{\mathrm{b}}\big|_{\hat{\mathcal{R}}^{(1)}} = \Psi^{(1)}\big|_{\hat{\mathcal{R}}^{(1)}}$ is indeed a bijection.

    Now one may verify that $\Psi^{\mathrm{b}}(\mathbf{r}')$ drawn on the double of $\mathbf{r}'=\mathsf{Expand}(\mathbf{r})$ is (a slightly expanded version of) $\Psi(\mathbf{r})$ drawn on the double of $\mathbf{r}$, except that there is a new root edge corresponding to the row above the base of $\mathbf{r}'$, see Figure~\ref{fig:zipping-relation}.  
    \begin{figure}[h!]
        \centering
        \includegraphics[width=.5\linewidth]{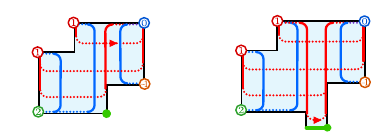}
        \caption{\label{fig:zipping-relation}}
    \end{figure}

    With the knowledge of the location of the root corner, $\mathsf{Zip}$ amounts to deleting this edge and rooting the resulting quadrangulation on the unique edge pointing from $1$ to $0$ with the root corner contained in the face on its right. 
    But that is precisely $\Psi(\mathbf{r})$, so $\Psi(\mathbf{r}) = \mathsf{Zip} \circ \Psi^{\mathrm{b}}\big|_{\hat{\mathcal{R}}^{(1)}} \circ\mathsf{Expand}(\mathbf{r})$.
    In particular, $\Psi$ is a bijection.
\end{proof}

\subsection{Dictionary}

With the bijection $\Psi : \mathcal{R} \to \mathcal{C}$ in place we are now ready to relate some natural statistics on both sides. 

\begin{proof}[Proof of Theorem~\ref{thm:dictionary}]
    For convenience, we reproduce here the table of correspondences that we need to prove.
\begin{center}
    \begin{tabular}{r|l|l}
        & Rigid quadrangulation $\mathbf{r}$ & Colorful quadrangulation $\mathfrak{q}$ \\\hline
        \ref{it:vertex} & non-root convex corner & vertex \\
        \ref{it:evenedge} & row & even edge \\
        \ref{it:oddedge} & column & odd edge \\
        \ref{it:face} & root-corner or concave corner & face\\
        \ref{it:rightconvex} & right-tangential convex corner of degree $\ell$ & local minimum of degree $\ell$ \\
        \ref{it:leftconvex} & left-tangential convex corner of degree $\ell$ & local maximum of degree $\ell$ \\
        \ref{it:rightside} & right-tangential side & decreasing level line \\
        \ref{it:leftside} & left-tangential side & increasing level line \\\hline
    \end{tabular}
\end{center}

    \noindent
    We discuss each of the elements of $\mathfrak{q} = \Psi(\mathbf{r})$ in turn:

    \ref{it:vertex}, \ref{it:evenedge} \& \ref{it:oddedge}. These follow directly from the definition of $\Psi$.

    \ref{it:face}. In a rigid quadrangulation $\mathbf{r}$ with $n+1$ convex corners there are precisely $n-3$ concave corners, while in a colorful quadrangulation with $n$ vertices there are $n-2$ faces.
    Therefore it is sufficient to show that each face of $\mathfrak{q}$ contains a concave corner or the root corner.
    To this end, let us consider the expanded version $\mathbf{r}' = \mathsf{Expand}(\mathbf{r}) \in \mathcal{R}_1$. 
    Now we wish to show that each face of $\mathfrak{q}' = \Psi^{(1)}(\mathbf{r}')$, except the root face, contains a concave corner.
    From the definition of $\Psi^{\mathrm{b}}$ one may check that for each concave corner, its vertical ray drawn on the front of the double and its horizontal ray on the back do not meet any edges of $\mathfrak{q}'$ (except perhaps at their endpoint).
    So we can reduce our task further to identifying at least one ray in each face $f$ of $\mathfrak{q}'$, since tracing the ray backwards will lead to a concave corner.
    Each face $f$ of $\mathbf{q}'$ is produced by a step $\mathfrak{R}^p_{\sigma}$ or $\mathfrak{L}^p_{\sigma}$ in the peeling exploration of $\mathbf{q}'$.
    Therefore $f$ is adjacent to one of the edges corresponding to a column of $\mathfrak{R}^p_{\sigma}$ or $\mathfrak{L}^p_{\sigma}$ that does not end on its base, shown in blue in Figure~\ref{fig:doubleRstep} and Figure~\ref{fig:doubleLstep}.
    But each of these also has a vertical ray of $\mathbf{r}'$ adjacent to it which is contained in $f$. 

    \ref{it:rightside} \& \ref{it:leftside}. Here we make convenient use of the mirror symmetry \eqref{eq:Psisymmetry} of the bijection: the mapping $\mathsf{Mirror}$ clearly interchanges right-tangential and left-tangential sides, while $\mathsf{Relabel}$ interchanges decreasing and increasing level lines.
    Moreover, assuming rigid quadrangulations are always drawn with the root corner on the bottom-right, at the same time $\mathsf{Mirror}$ interchanges vertical and horizontal tangential sides and $\mathsf{Relabel}$ interchanges odd and even level lines, where we say a level line is even/odd if the lowest adjacent label is even/odd.
    Because of this symmetry is suffices to check that horizontal (right- respectively left-)tangential sides are in correspondence with odd (decreasing respectively increasing) level lines.
    To this end we switch again to the expanded $\mathbf{r}'$ and unzipped $\mathfrak{q}'$.
    Horizontal tangential sides of $\mathbf{r}$ correspond to horizontal tangential sides of $\mathbf{r}'$ excluding the base.
    Recall from Section~\ref{sec:rowbyrow} that every step in the row-by-row exploration of $\mathbf{r}'$ reveals one horizontal side.
    As illustrated in Figure~\ref{fig:stepstangential}, if the type of exploration step is $\mathbf{G}_{\ell,p-\ell-1}$, $\mathbf{R}_\sigma^p$ or $\mathbf{L}_\sigma^p$ then this horizontal side is respectively non-tangential, right-tangential or left-tangential.
    \begin{figure}[h!]
        \centering
        \includegraphics[width=.75\linewidth]{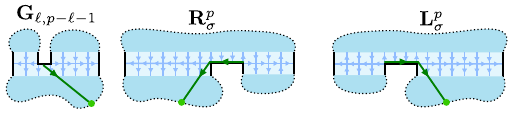}
        \caption{\label{fig:stepstangential}}
    \end{figure}
    The reason is that a shortest curve $\Gamma_x$ from a point $x$ on the horizontal side to the root corner has to cross the base of $\mathbf{G}_{\ell,p-\ell-1}$, $\mathbf{R}_\sigma^p$ or $\mathbf{L}_\sigma^p$ seen as a submap embedded in $\mathbf{r}'$. 
    On the side of colorful quadrangulations, the odd level lines are seen to be precisely the ones that traverse rings of quadrangles produced in the steps $\mathfrak{R}_\sigma^p$ or $\mathfrak{L}_\sigma^p$.
    Since they are decreasing for $\mathfrak{R}_\sigma^p$ and increasing for $\mathfrak{L}_\sigma^p$, this proves the correspondence.
    
    \ref{it:rightconvex} \& \ref{it:leftconvex}. Clearly a non-root convex corner of $\mathbf{r}$ is left/right-tangential if and only if it is adjacent to a left/right-tangential side.
    Again using symmetry it is sufficient to show that a non-root convex corner corresponds to an odd local minimum or an even local maximum in $\mathfrak{q}$ (or, equivalently, it is encircled by an odd level line) if and only if it is adjacent to a horizontal tangential side.
    This applies equally to the expanded rigid quadrangulation $\mathbf{r}'$ if we exclude the convex corners on the base and the unzipped colorful quadrangulation $\mathfrak{q}'$.
    By our discussion on \ref{it:rightside} \& \ref{it:leftside} above, a tangential horizontal side ending in a non-root convex corner is revealed in an exploration step $\mathbf{R}^p_{\sigma}$ and $\mathbf{L}^p_{\sigma}$ whenever $\sigma$ contains no $\downarrow$.
    The corresponding events $\mathfrak{R}_\sigma^p$ and $\mathfrak{L}_\sigma^p$ in the exploration of $\mathfrak{q}'$ reveal precisely the odd local minima or even local maxima. 
    Observing that the length of the horizontal side and the degree of the local maximum/minimum both agree with the length of the word $\sigma$, this finishes the proof of the claimed correspondences.
\end{proof}

\subsection{The $k$-fold base of a rigid quadrangulation via ascent paths}\label{sec:ascentpath}

The purpose of this subsection is to determine the image of $\mathcal{R}^{(\ell_1,\ldots,\ell_k)}$ under $\Psi$ and use this to prove Theorem~\ref{thm:basebijection}.
To state the main result of this subsection, we define for each $k \geq 1$ and $\ell_1,\ldots, \ell_k$ a unique colorful quadrangulation $\mathfrak{w}^{(\ell_1,\ldots,\ell_k)}$ with a single hole as follows.

For each $i=1,\ldots,k$, we take $\ell_i$ quadrangles of label $(i-1,i,i+1,i)$ and glue them sequentially as in Figure~\ref{fig:ascent}a: a face $f$ with label $(i-1,i,i+1,i)$ is glued to the next face $f'$ along the edge of $f$ just to the right of $i+1$ if $i+1$ is even or to the left of $i+1$ if $i+1$ is odd, noting that there is a unique edge of $f'$ that has compatible labels.
This gives a colorful quadrangulation with a hole (corresponding to the outer face) and the hole has a single corner of label $k+1$.
Finally, $\mathfrak{w}^{(\ell_1,\ldots,\ell_k)}$ is obtained from this by gluing together the two edges adjacent to corner of label $k+1$, see Figure~\ref{fig:ascent}b.
We root $\mathfrak{w}^{(\ell_1,\ldots,\ell_k)}$ on the unique oriented edge of the first quadrangle that points from $1$ to $0$ in clockwise order.
Moreover, the hole is marked at the edge that shares its endpoint with label $0$ with the root edge.

\begin{proposition}\label{prop:ascentsubmap}
    Let $\mathbf{r} \in \mathcal{R}$ and $\mathfrak{q} = \Psi(\mathbf{r})$. Then $\mathbf{r} \in \mathcal{R}^{(\ell_1,\ldots,\ell_k)}$ if and only if $\mathfrak{w}^{(\ell_1,\ldots,\ell_k)} \subset \mathfrak{q}$.
\end{proposition}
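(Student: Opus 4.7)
I would prove this by induction on $k$, taking the $k=1$ case of Theorem~\ref{thm:basebijection} (established in Section~\ref{sec:zipping} via the $\mathsf{Expand}$/$\mathsf{Zip}$ construction and Proposition~\ref{prop:bijectionequivalence}) as the starting point.

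\textbf{Base case $k=1$.} For $\mathbf{r} \in \mathcal{R}^{(p)}$ with $p = \ell_1$, Proposition~\ref{prop:bijectionequivalence} gives $\Psi^{\mathrm{b}}(\mathbf{r}) = \Psi^{(p)}(\mathbf{r}) \in \mathcal{C}^{(p)}$, a colorful quadrangulation of the disk of perimeter $2p$ with boundary labels $(0,1,0,1,\ldots)$. The map $\Psi(\mathbf{r})$ is recovered from $\Psi^{\mathrm{b}}(\mathbf{r})$ by reinserting the $p$ deleted edges (one per column of $\mathbf{r}$ ending on the base) together with the deleted endpoint vertex of the base. A direct inspection shows that these reinserted elements form exactly the submap $\mathfrak{w}^{(p)}$: a chain of $p$ faces of labels $(0,1,2,1)$ glued along their $(1,2)$-edges, with the two outer $(1,2)$-edges zipped at the terminal label-$2$ corner (which corresponds to the convex corner at the end of the base). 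Conversely, if $\mathfrak{w}^{(p)} \subset \mathfrak{q}$, removing its interior and unzipping produces a colorful quadrangulation of the disk in $\mathcal{C}^{(p)}$ whose preimage under $\Psi^{\mathrm{b}}$ is some $\mathbf{r} \in \mathcal{R}^{(p)}$ with $\Psi(\mathbf{r}) = \mathfrak{q}$.

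\textbf{Inductive step $k \geq 2$.} The key observation is that each of the $k-1$ concave corners in the base of $\mathbf{r}$ corresponds precisely to a transition in $\mathfrak{w}^{(\ell_1, \ldots, \ell_k)}$ from one block of $\ell_i$ quadrangles of labels $(i-1,i,i+1,i)$ to the next block of labels $(i,i+1,i+2,i+1)$. Tracing the boundary of $\mathbf{r}$ clockwise from the root, side by side: each straight boundary vertex on side $i$ contributes a face of labels $(i-1,i,i+1,i)$ in $\mathfrak{q}$ (via the dictionary of Theorem~\ref{thm:dictionary} combined with a turning-number analysis), while each concave corner at the junction between sides $i$ and $i+1$ forces the next face to have labels shifted by $+1$. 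The parity-dependent gluing rule in the definition of $\mathfrak{w}^{(\ell_1, \ldots, \ell_k)}$ (gluing to the right of $i+1$ if $i+1$ is even, to the left if odd) matches the alternating orientation of vertical-versus-horizontal rays at consecutive concave corners in $\mathbf{r}$, since sides of the base alternate between horizontal and vertical as we cross each concave corner. The terminal zipping corresponds to the convex corner at the end of the base, which has turning number $k+1$ by a direct count. One may then conclude by comparing what remains of $\mathfrak{q}$ after removing $\mathfrak{w}^{(\ell_1, \ldots, \ell_k)}$ with the complementary portion of $\mathbf{r}$, reducing to the $k=1$ case applied to the appropriate sub-quadrangulations.

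\textbf{Main obstacle.} The most delicate part is the reverse implication: showing that the containment $\mathfrak{w}^{(\ell_1, \ldots, \ell_k)} \subset \mathfrak{q}$ uniquely forces the tuple $(\ell_1, \ldots, \ell_k)$ to be the base-side pattern of $\mathbf{r} = \Psi^{-1}(\mathfrak{q})$. One must verify that no other sequence of side lengths or concave-corner configuration could produce an isomorphic submap, which requires careful tracking of labels, parities, and orientations through the peeling exploration of $\mathfrak{q}$ and the associated row-by-row exploration of $\mathbf{r}$, making essential use of the fact that the parity-based gluing rule in the construction of $\mathfrak{w}$ is rigid enough to prevent ambiguity.
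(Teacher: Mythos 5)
There is a genuine gap, and it sits exactly where you flag your ``main obstacle.'' The reverse implication --- that $\mathfrak{w}^{(\ell_1,\ldots,\ell_k)} \subset \mathfrak{q}$ forces $\mathbf{r}\in\mathcal{R}^{(\ell_1,\ldots,\ell_k)}$ --- is the content of the proposition, and your proposal only asserts that the ``parity-based gluing rule is rigid enough to prevent ambiguity'' without proving it. The paper's mechanism for closing this is the \emph{ascent path} $\alpha(\mathbf{r})$: on the double of $\mathbf{r}$ it traces the base clockwise from the root, and Lemma~\ref{lem:ascentcrossing} shows it crosses precisely the edges of $\Psi(\mathbf{r})$ coming from rows and columns ending on the base, in order, with the $\ell_i$ edges of label $(i,i+1)$ merging to a common endpoint (which is what produces the chain-of-quadrangles structure of $\mathfrak{w}^{(\ell_1,\ldots,\ell_k)}$, including the terminal zipping at the label-$(k+1)$ vertex). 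The decisive point is that this same path admits an \emph{intrinsic} local description on $\mathfrak{q}$ alone: start in the face to the right of the root, repeatedly cross the edge to the right (resp.\ left) of the maximal-label vertex of the current face according to the parity of that label, and stop upon circling a local maximum. Because the path is canonically determined by $\mathfrak{q}$, the multiset of labels of the edges it crosses is an invariant of $\mathfrak{q}$, which pins down $(\ell_1,\ldots,\ell_k)$ uniquely and rules out a second submap $\mathfrak{w}^{(\ell'_1,\ldots,\ell'_{k'})}$ with a different tuple. Your proposal contains no substitute for this canonicity argument.

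Separately, your inductive step is not a well-defined induction. The $k$-fold base is a global feature anchored at the root corner, and there is no natural operation that removes one side of the base and leaves a rigid quadrangulation with a $(k-1)$-fold base in a way that commutes with $\Psi$ (cutting along the ray extending the first side, as in the decomposition behind equation~\eqref{eq:EPBrelation}, produces a P-type and a B-type piece, not a shorter base). The side-by-side correspondence you sketch is also off numerically: a side of length $\ell_i$ has $\ell_i$ edges and hence contributes $\ell_i$ edges of label $(i,i+1)$ to $\mathfrak{q}$ (one per row or column ending on that side), not one face per straight boundary vertex, of which there are only $\ell_i-1$; and the dictionary item~\ref{it:face} pairs faces with concave corners, of which the base has only $k-1$, so the $\sum_i \ell_i$ faces of $\mathfrak{w}$ cannot all come from base corners. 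The base case $k=1$ of your argument is essentially sound (it matches how the paper later deduces Theorem~\ref{thm:basebijection} from this proposition), but the proposition needs to be proved uniformly in $k$, which is what the ascent path accomplishes.
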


\noindent
The central idea is that on the double of a rigid quadrangulation $\mathbf{r} \in \mathcal{R}$ we can identify a natural path $\alpha(\mathbf{r})$, called the \emph{ascent path}, starting at the root and tracing the boundary of $\mathbf{r}$ in clockwise order until it reaches a convex corner.
More precisely, it starts travelling horizontally on the front just above the boundary (see Figure~\ref{fig:ascent}c).
Upon encountering a concave corner it circles around until hitting the vertical boundary, where it switches to the back and travels vertically a little away from the boundary. 
At the next concave corner it circles around again until hitting the horizontal boundary and switches to the front, and so on until ending at a convex vertex.

\begin{lemma}\label{lem:ascentcrossing}
    If $\mathbf{r}\in\mathcal{R}^{(\ell_1,\ldots,\ell_k)}$, the ascent path $\alpha(\mathbf{r})$ crosses $\ell_1$ edges of $\Psi(\mathbf{r})$ with label $(1,2)$, then $\ell_2$ edges with $(2,3)$, and so on until it finally crosses $\ell_k$ edges with label $(k,k+1)$. Moreover,
    \begin{enumerate}[label = (\roman*)] 
        \item\label{it:ascentendpoint} for each $i=1, \ldots, k$, all $\ell_i$ edges with label $(i,i+1)$ share the same endpoint with label $i+1$;
        \item\label{it:ascentevenodd} viewed along $\alpha(\mathbf{r})$ even labels are always on the left and odd labels on the right;
        \item\label{it:ascentlast} the last $\ell_k$ edges crossed by $\alpha(\mathbf{r})$ are all the edges incident to the degree-$\ell_k$ vertex at the endpoint of $\alpha(\mathbf{r})$.
    \end{enumerate}
\end{lemma}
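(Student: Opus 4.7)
The plan is to prove the lemma by induction on $k$, the number of sides in the base, with careful inspection of the edge-drawing algorithm on the double of $\mathbf{r}$. The ascent path $\alpha(\mathbf{r})$ decomposes naturally into $k$ consecutive arcs $\alpha_1,\ldots,\alpha_k$, where $\alpha_i$ traverses side $i$ and, for $i<k$, includes the small detour circling the concave corner at the end of side $i$. A first geometric observation identifies the edges of $\mathfrak{q}$ crossed by $\alpha_i$ with the $\ell_i$ rows or columns of $\mathbf{r}$ ending on side $i$: each such row or column contributes one middle segment drawn on the opposite face of the double from $\alpha_i$ and immediately adjacent to side $i$, and $\alpha_i$ crosses each such segment exactly once while crossing nothing else.

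For the base case $k=1$, the end-of-base convex corner $c$ has label $k+1=2$ in $\mathfrak{q}$, via the turning-number computation $\tau(c)=\bigl[(n+1)-2\bigr]-\bigl[(n-3)-(k-1)\bigr]=k+1$ using that the base contributes exactly $2$ convex corners and $k-1$ concave corners of $\mathbf{r}$. Applying the edge-drawing algorithm, the extensions of each of the $\ell_1$ column-edges run along the base on the back of the double and merge at $c$, establishing the shared label-$2$ endpoint. The other endpoints carry label $1$ rather than $3$ because the boundary labels of $\Psi^{\mathrm{b}}(\mathbf{r})$ alternate $0,1$ (by Theorem~\ref{thm:basebijection} for $k=1$, which is already in hand via Theorem~\ref{thm:bijection} and Proposition~\ref{prop:bijectionequivalence}), and the colorful condition forces the neighbors of $c$ on this boundary cycle to have label $1$. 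Item (iii) for $k=1$ is then immediate: any edge of $\mathfrak{q}$ incident to $c$ must come from a row or column whose extension terminates at $c$, which by the algorithm forces it to end on the base, so these are exactly the $\ell_1$ edges already counted.

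For the inductive step at the concave corner separating sides $i$ and $i+1$, the key point is that the edges from arc $\alpha_i$ merge, through their extensions along the boundary, at a single convex corner of label $i+1$ which is simultaneously one of the endpoints of the first edge crossed by $\alpha_{i+1}$. This common vertex is located by following the extensions of the row/column adjacent to the concave corner on each of the two sides until they reach a convex corner; the count of intervening convex and concave corners along the traversed portion of the boundary yields turning number $i+1$, hence label $i+1$ in $\mathfrak{q}$. The labels of the remaining endpoints, namely $i$ on the arc-$\alpha_i$ side and $i+2$ on the arc-$\alpha_{i+1}$ side, are then forced by the colorful condition. Part (ii) follows directly: adjacent labels in $\mathfrak{q}$ differ in parity, and $\alpha(\mathbf{r})$ traverses the boundary consistently clockwise, so the parity of labels on each side of $\alpha(\mathbf{r})$ is uniform.

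The main obstacle is the careful verification of the bouncing pattern of the edge extensions across the $k-1$ concave corners of the base: one has to check that all $\ell_i$ extensions on side $i$ genuinely converge at the same label-$(i+1)$ vertex rather than branching off at intermediate convex corners, and that the first edge of $\alpha_{i+1}$ shares this vertex. I expect this to amount to a case analysis in the spirit of the proof of Proposition~\ref{prop:extendbijequivalence}, propagated through each concave corner of the base and relying crucially on the rigidity hypothesis that controls how rays and their extensions terminate on the boundary.
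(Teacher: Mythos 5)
Your overall skeleton matches the paper's: the crossed edges are exactly those coming from the rows and columns that end on the base, grouped side by side, their extensions merge toward a common vertex, and labels are computed from turning numbers. But as written the proposal has genuine gaps. The central verification --- that all $\ell_i$ extensions converge at a single label-$(i+1)$ vertex, and that $\alpha(\mathbf{r})$ crosses each of these edges exactly once and nothing else --- is essentially the content of the lemma, and you explicitly defer it (``I expect this to amount to a case analysis''). The paper closes it not with a case analysis but with a short orientation argument: every row or column ending on the $i$th side of the base has its transverse orientation pointing in the ascent direction, so when the corresponding edges are traced toward the base they all turn parallel in the same direction upon hitting the boundary and hence merge on their way to one vertex; conversely, rows and columns not ending on the base are transversally oriented away from it, so their edges never touch the base or the ascent path, which gives ``crosses nothing else.'' Without some version of this, neither the crossing count nor item (i) is established. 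Note also that your induction on $k$ is not really an induction: the inductive hypothesis is never applied to a smaller rigid quadrangulation, and your ``inductive step'' is an iteration over the concave corners of the base of the same map, so it buys nothing over a direct argument.

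There is also a concrete geometric error: the middle segments are not ``drawn on the opposite face of the double from $\alpha_i$.'' By the drawing convention vertical segments lie on the front and horizontal ones on the back; on a horizontal side $\alpha(\mathbf{r})$ travels on the front and crosses the vertical midsections of the columns ending there, which also lie on the front (and analogously on the back for vertical sides). This matters because item (ii) is precisely about which label sits to the left and right of each crossing, and the paper's proof of (ii) rests on the fact that the front and back carry opposite parity in the planar drawing. Your argument for (ii) --- uniform parity on each side because adjacent labels differ in parity and $\alpha(\mathbf{r})$ goes clockwise --- does not determine which parity is on which side and is not a proof. Finally, item (iii) is only treated for $k=1$; for general $k$ the paper argues that the final stretch of $\alpha(\mathbf{r})$ and the stretch circling the last concave corner lie in the same face of $\Psi(\mathbf{r})$, which is what forces the last $\ell_k$ crossed edges to exhaust the edges incident to the terminal vertex. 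You would need an analogous argument, and your $k=1$ justification (``terminating at $c$ forces the row or column to end on the base'') is itself asserted rather than derived from the edge-drawing algorithm.
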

\begin{figure}[h!]
    \centering
    \includegraphics[width=\linewidth]{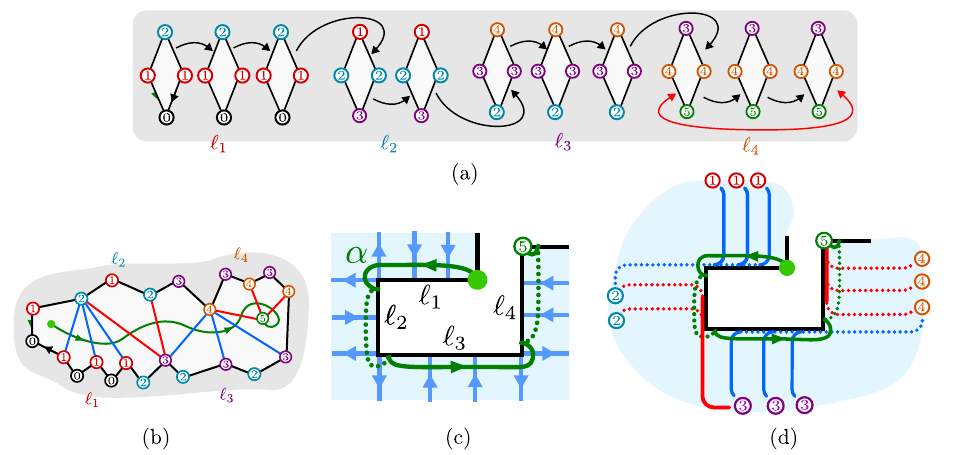}
    \caption{Illustrations in the case $(\ell_1,\ell_2,\ell_3,\ell_4) = (3,2,3,3)$. (a) Gluing of quadrangles in the construction of $\mathfrak{w}^{(\ell_1,\ldots,\ell_k)}$. The red arrow indicates the final gluing of the two edges on the hole adjacent to label $k+1$. (b) The colorful quadrangulation $\mathfrak{w}^{(\ell_1,\ldots,\ell_k)}$ with its hole on the outside. The marked edge of the hole is indicated by a green arrow. (c) The base of a rigid quadrangulation $\mathbf{r} \in \mathcal{R}^{(\ell_1,\ell_2,\ell_3,\ell_4)}$ with its ascent path $\alpha(\mathbf{r})$ in green. (d) An schematic illustration of the edges of $\Psi(\mathbf{r})$ associated to the rows (red) and columns (blue) ending on the base, that are each crossed exactly once by $\alpha(\mathbf{r})$. Dotted lines are drawn on the back of the double.\label{fig:ascent}}
\end{figure}
\begin{proof}
    These mentioned edges are precisely the edges associated to the rows and columns that start at the base.
    For $i=1,\ldots,k$, the $\ell_i$ rows or columns ending on the $i$th side of the base have transverse orientation in the direction of $\alpha(\mathbf{r})$ and therefore the corresponding edge has label $(i,i+1)$.
    Tracing these edges in the direction of the base, they turn in the direction parallel to $\alpha(\mathbf{r})$ and therefore merge on their way to a single vertex of label $i+1$ (item \ref{it:ascentendpoint}).
    We claim that $\alpha(\mathbf{r})$ crosses each of them once (and does so transversally), in the order in which they end on the base starting from the root.
    When $\alpha(\mathbf{r})$ crosses a column on the base, it is traveling on the front and therefore crosses the corresponding edge, which includes the vertical midsection of the column on the front, with the odd label on its right.
    In the way that $\alpha(\mathbf{r})$ circles around the next concave corner, it avoids further crossings with this edge (see Figure~\ref{fig:ascent}d).
    Similarly for a row, $\alpha(\mathbf{r})$ travels vertically on the back and crosses the horizontal midsection of the corresponding edge, this time with the even label on the right (note that the front and back have opposite parity in the planar drawing).
    The ascent path $\alpha(\mathbf{r})$ does not cross any other edges of the quadrangulation, because rows and columns that do not end on the base are transversally oriented away from it and therefore the corresponding edges do not touch the base, nor the ascent path.

    For statement \ref{it:ascentlast}, we note that the final stretch of  $\alpha(\mathbf{r})$ after its last edge crossing and the stretch of $\alpha(\mathbf{r})$ where it circles around the last concave corner (in case $k > 1$, or the initial stretch in case $k=0$) are in the same face of $\Psi(\mathbf{r})$.
    Indeed, one can travel between these stretches without crossing any edges by moving parallel to the boundary on the front if $k$ is even (like in Figure~\ref{fig:ascent}d) or on the back if $k$ is odd. 
\end{proof}

An important consequence of Lemma~\ref{lem:ascentcrossing} is that the path $\alpha(\mathbf{r})$ can be deduced from local information in the colorful quadrangulation $\mathfrak{q} = \Psi(\mathbf{r})$.
Informally, it can be seen as a unique path from the root to a local maximum that is weakly ascending with respect to the labels, explaining the terminology "ascent". 
To be precise, it starts in the center of the face to the right of the root edge of $\mathfrak{q}$.
Then iteratively, if $v$ is the vertex of maximal label in its present face, the path crosses the edge to the right of $v$ if $v$ has even label or to the left of $v$ if $v$ has odd label. 
It stops once the path has made a full turn around a vertex $v$, which is then necessarily a local maximum of $\mathfrak{q}$.

\begin{proof}[Proof of Proposition~\ref{prop:ascentsubmap}]
If $\mathbf{r}\in\mathcal{R}^{(\ell_1,\ldots,\ell_k)}$, then the unique submap $\mathfrak{w} \subset \mathfrak{q}$ whose inner edges (i.e.\ the edges that are not adjacent to a hole) are the edges crossed by the ascent path is seen to be precisely $\mathfrak{w}^{(\ell_1,\ldots,\ell_k)}$.
Vice versa, if $\mathfrak{w}^{(\ell_1,\ldots,\ell_k)} \subset \mathfrak{q}$, then by the description of the ascent path on $\mathfrak{q}$ that we just gave and the construction of $\mathfrak{w}^{(\ell_1,\ldots,\ell_k)}$, the ascent path $\alpha(\mathbf{r})$ crosses all inner edges of $\mathfrak{w}^{(\ell_1,\ldots,\ell_k)}$ and ends on its unique inner vertex.
Judging by the labels of these edges, we have $\mathbf{r} \in \mathcal{R}^{(\ell_1,\ldots,\ell_2)}$.
\end{proof}

\noindent
We are now ready to prove Theorem~\ref{thm:basebijection}.

\begin{proof}[Proof of Theorem~\ref{thm:basebijection}]
Proposition~\ref{prop:ascentsubmap} says that $\Psi$ restricts to a bijection between $\mathcal{R}^{(\ell_1,\ldots,\ell_k)}$ and $\{ \mathfrak{q} \in \mathcal{C} :  \mathfrak{w}^{(\ell_1,\ldots,\ell_k)} \subset \mathfrak{q}\}$.
A colorful quadrangulation $\mathfrak{q} = \Psi(\mathbf{r})$ in the latter set is bijectively determined by the colorful quadrangulation $\mathfrak{u}$ that has to be glued in the hole of $\mathfrak{w}^{(\ell_1,\ldots,\ell_k)}$ to recover $\mathfrak{q}$.
Recall that in this gluing procedure the root of $\mathfrak{u}$ is glued to the marked edge on the hole.
The only condition on a colorful quadrangulation of the disk to appear as $\mathfrak{u}$ in this way is that the labels along its boundary match with the labels on the hole of $\mathfrak{w}^{(\ell_1,\ldots,\ell_k)}$, which is precisely the condition that $\mathfrak{u} \in \mathcal{C}^{(\ell_1,\ldots,\ell_k)}$.

Moreover, $\mathfrak{u}$ is obtained from $\mathfrak{q}$ by deleting all inner edges and inner vertices, but these are precisely the edges corresponding to rows and columns that end on the base of $\mathbf{r}$ and the vertex corresponding to the endpoint of the base.
In other words, $\mathfrak{u} = \Psi^{\mathrm{b}}(\mathbf{r})$.
This shows that $\Psi^{\mathrm{b}}$ restricts to a bijection between $\mathcal{R}^{(\ell_1,\ldots,\ell_k)}$ and $\mathcal{C}^{(\ell_1,\ldots,\ell_k)}$.
\end{proof}

\subsection{Restriction to type B and C}

\begin{proof}[Proof of Corollary~\ref{cor:doublebase}]
Recall from the introduction that a base-$p$ rigid quadrangulation $\mathbf{r}\in \mathcal{R}^{(p)}$ is of type B if the vertical side to the right of the base of $\mathbf{r}$ ends in a convex corner $c$, and that $\mathcal{R}^{(p)(q)}_{\mathrm{B}}$ is the set of such rigid quadrangulation for which this vertical side is of length $q$.
In the row-by-row exploration $\mathbf{e}_0\subset \mathbf{e}_1 \subset \cdots \subset \mathbf{r}$, this condition amounts precisely to the first $q-1$ steps all being of type $\mathbf{G}_{k,\ell}$ for some $k \geq 0$ and $\ell \geq 1$ or type $\mathbf{L}_\sigma^k$ and the $q$th step being $\mathbf{G}_{k,0}$ for some $k\geq 0$, because $\mathbf{G}_{k,0}$ is the only step that reveals a top-right convex corner.  

On the other hand, a colorful quadrangulation $\mathfrak{q}\in \mathcal{C}^{(p)}$ is of type B if the endpoint $v_0$ of the root edge of $\mathfrak{q}$ with label $0$ is a local minimum, and $\mathfrak{q}$ belongs to $\mathcal{C}^{(p)(q)}_{\mathrm{B}}$ if this vertex has degree $q$.
In terms of the peeling exploration $\mathfrak{e}_0 \subset \mathfrak{e}_1 \subset \cdots \subset \mathfrak{q}$, this corresponds to the first $q-1$ steps of type $\mathfrak{G}_{k,\ell}$ for some $k \geq 0$ and $\ell \geq 1$ or type $\mathfrak{L}_\sigma^k$ and the $q$th step being $\mathfrak{G}_{k,0}$ for some $k\geq 0$.
Indeed, while $v_0$ is still on the boundary of the (first) hole of $\mathfrak{e}_i$ a step of type $\mathfrak{R}_\sigma^k$ would reveal a $(-1,0)$-edge incident to $v_0$.
Without such steps the first time in the exploration that vertex $v_0$ is not adjacent to the hole anymore is after the first occurrence of a step $\mathfrak{G}_{k,0}$ for some $k \geq 0$, because that glues the two edges adjacent to $v_0$ on the hole together.
Every step until that time reveals precisely one new $(0,1)$-edge incident to $v_0$, implying that $v_0$ is fully explored after time $q$.

According to Theorem~\ref{thm:bijection}, the described conditions are equivalent in case $\mathfrak{q} = \Psi^{(p)}(\mathbf{r})$.
Proposition~\ref{prop:bijectionequivalence} then implies that for every $p,q \geq 1$ the mapping $\Psi^{\mathrm{b}}$ restricts to a bijection between $\mathcal{R}^{(p)(q)}_{\mathrm{B}}$ and $\mathcal{C}^{(p)(q)}_{\mathrm{B}}$.

To verify that it further restricts to a bijection between $\mathcal{R}^{(p)(q)}_{\mathrm{C}}$ and $\mathcal{C}^{(p)(q)}_{\mathrm{C}}$, we need to show that for $\mathbf{r} \in \mathcal{R}^{(p)(q)}_{\mathrm{B}}$ the endpoint $v_0$ of the root edge of $\Psi^{\mathrm{b}}(\mathbf{r})$ is incident only once to the root face if and only if $\mathbf{r}$ contains a rectangle of size $p \times q$ touching the root corner.
Recalling the notation of $\vec{v}_i(\mathfrak{q})$ from Proposition~\ref{prop:extendbijequivalence}, the former condition is equivalent to requiring that $\vec{v}_i(\mathfrak{q}) \neq v_0$ for $i = 2, \ldots, p$.
So on $\mathbf{r}$ this is equivalent to none of the paths $\vec{\gamma}_2 (\mathbf{r}), \cdots, \vec{\gamma}_p (\mathbf{r})$ ending on $c$.
The illustration in Figure~\ref{fig:relationCtype} should make clear that is indeed equivalent to $\mathbf{r}$ containing a rectangle of size $p \times q$.
\end{proof}
\begin{figure}[h!]
    \centering
    \includegraphics[width=.5\linewidth]{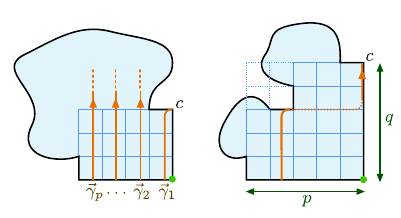}
    \caption{If $\mathbf{r}$ contains the full rectangle of size $p\times q$ (left), then none of $\vec{\gamma}_2 (\mathbf{r}), \cdots, \vec{\gamma}_p (\mathbf{r})$ end on $c$ because they exit the rectangle upwards. Otherwise (right) at least one of them is deflected to the right and ends on $c$.\label{fig:relationCtype}}
\end{figure}

\section{Enumeration}

\subsection{Generating functions}

With the bijections in place it is a straightforward task to translate the enumeration results of Bousquet-M\'elou and Elvey Price in \cite{Bousquet-Melou_generating_2020,Bousquet-Melou_Refined_2025} for colorful quadrangulations to rigid quadrangulations.

\begin{proof}[Proof of Theorem~\ref{thm:rigidquadenum}]
    According to \cite[Theorem~1.2 \& Corollary~5.2]{Bousquet-Melou_generating_2020} the generating function of colorful labeled quadrangulations with weight $t$ per face is $\frac{1}{2t^2}(t- 2t^2 - R(t))$. 
    Exactly half of these have labels $(0,1,2,1)$ on the face to the right of the root.
    Since quadrangulations have two more vertices than faces, we find that the generating function for $\mathcal{C}$ with weight $t$ per vertex is $\frac{1}{4}(t- 2t^2 - R(t))$.
    By Theorem~\ref{thm:rootedbijection} this is also the generating function of $\mathcal{R}$.
    The asymptotic enumeration is taken from \cite[Theorem~1.2]{Bousquet-Melou_generating_2020}, where $|\mathcal{R}_n| = g_{n-2}$.
\end{proof}

\begin{proof}[Proof of Corollary~\ref{cor:multivariate}]
    The generating functions for $\mathcal{R}^{(p)}$, $\mathcal{R}^{(p)(q)}_{\mathrm{C}}$ and $\mathcal{R}^{(p)(q)}_{\mathrm{B}}$ are direct translations of those appearing in \cite[Theorem~7.1]{Bousquet-Melou_generating_2020}.
    To be precise, according to \cite[Definition~3.2 and equation (15)]{Bousquet-Melou_generating_2020}, $\mathsf{P}(t,y)$ enumerates P-patches with weight $t$ per inner face and $y$ for the half-perimeter while $\mathcal{P}(t,y) = t \mathsf{P}(t,ty)$ counts these with a weight $t$ per vertex.   
    Excluding the ``atomic patch'' with zero perimeter and a single vertex, the generating function of $\mathcal{C}^{(p)}$ is thus $P(t,y) = \mathcal{P}(t,y) - t$.
    By Theorem~\ref{thm:basebijection} it is also the generating function of rigid quadrangulations $\mathcal{R}^{(p)}$ with weight $t$ per convex corner that is not on the base, of which there are $n(\mathbf{r})-1$.
    
    Similarly, their series $\mathcal{C}(t,x,y) = \mathsf{C}(t,x,ty)$ enumerates C-patches with weight $t$ per non-root vertex, $y$ for the half-perimeter and $x$ for the degree of the root vertex. 
    These are precisely the C-type colorful quadrangulations $\mathcal{C}^{(p)(q)}_{\mathrm{C}}$ (after reversal of the root edge).
    By Corollary~\ref{cor:doublebase}, $C(t,x,y)=\mathcal{C}(t,x,y)$ also enumerates C-type rigid quadrangulations $\mathcal{R}^{(p)(q)}_{\mathrm{C}}$ with weight $t$ per convex corner, excluding the root corner as well as the two adjacent convex corners.
    According to \cite[Lemma 3.4]{Bousquet-Melou_generating_2020}, the generating function of B-patches, including the atomic patch, is $\mathcal{B}(t,x,y) = 1 / (1- \mathcal{C}(t,x,y))$.
    Then $B(t,x,y) = \mathcal{B}(t,x,y) -1$ is the generating function of B-type colorful quadrangulations $\mathcal{C}^{(p,q)}_{\mathrm{B}}$ and therefore also of B-type rigid quadrangulations $\mathcal{R}^{(p)(q)}_{\mathrm{B}}$.
    This proves the relation $C(t,x,y) = 1 - 1 / (1+B(t,x,y))$.

    Finally, E-patches are defined in \cite[Definition~2.8]{Bousquet-Melou_Refined_2025} to be colorful quadrangulations of the disk with labels around the boundary of the type $0,1,0,1,\ldots,0,-1,0,-1,\ldots,0,-1$.
    The generating function for E-patches is identified in \cite[Proposition~3.9]{Bousquet-Melou_Refined_2025}, setting $v=1$ and $\omega=0$. Comparing to \cite[Lemma~6.4]{Bousquet-Melou_generating_2020} shows that 
    \begin{align}
    \mathsf{E}(t,x,y) = \frac{1}{x y} \mathsf{C}\left(t,\frac{x}{t},y\right) = \frac{1}{xy}C\left(t,\frac{x}{t},\frac{y}{t}\right)    
    \end{align}
    enumerates E-patches with weight $t$ per inner face, weight $y$ per label $1$ and weight $x$ per label $-1$ on the boundary.
    Then 
    \begin{align}
        E(t,x,y) &= \mathsf{E}(t,t x,t y) - \mathsf{E}(t,0,t y) - \mathsf{E}(t,t x,0) + \mathsf{E}(t,0,0) \nonumber\\
        &= \frac{1}{xy} C(t,x,y) - P(t,x) - P(t,y) - t
    \end{align}
    enumerate E-patches with weight $t$ per non-root vertex and labels $1$ and $-1$ appearing both at least once on the boundary.
    In the second line we used the identity $[x^1]\mathsf{C}(t,x,y) = y \mathsf{P}(t,y)$ from \cite[Theorem~6.1]{Bousquet-Melou_generating_2020}. 
    By shifting all labels by $+1$ these are precisely the colorful quadrangulations $\mathcal{C}^{(p,q)}$.
    So by Theorem~\ref{thm:basebijection} it is also the generating function of $\mathcal{R}^{(p,q)}$ weighted by $t^{n(\mathbf{r})-2}$.    
\end{proof}

\subsection{Explanation of the catalytic equations}\label{sec:catalyticexplanation}

Bousquet-M\'elou and Elvey Price found these generating functions by showing that they uniquely solve a (2-catalytic) system of equations resulting from bijective decompositions of colorful quadrangulations, see \cite[Theorem~6.1]{Bousquet-Melou_generating_2020}.
In our notation these equations boil down to
\begin{align}
    P(t,y) &= [x^0]\frac{C(t,x,y)-t x y}{x y},\label{eq:PCrelation}\\
    B(t,x,y) &= \frac{1}{1-C(t,x,y)} - 1, \label{eq:BCrelation}\\
    C(t,x,y) &= xy \left(E(t,x,y) + P(t,x)+P(t,y)+t\right), \label{eq:BErelation} \\
    E(t,x,y) &= [x^{>0}]P(t,x)B\left(t,\frac{1}{1-x^{-1}},y\right),\label{eq:EPBrelation}
\end{align}
where in the first line $[x^0]$ means extracting the coefficient of $x^0$ in the series, and in the last line $B\left(t,\frac{1}{1-x^{-1}},y\right)$ is regarded as a formal power series in $x^{-1}$ while $[x^{>0}]$ means keeping only the terms with positive powers of $x$ in the resulting Laurent series.  
Let's show that they have simple interpretations at the level of rigid quadrangulations.

\begin{figure}[h]
    \centering
    \includegraphics[width=\linewidth]{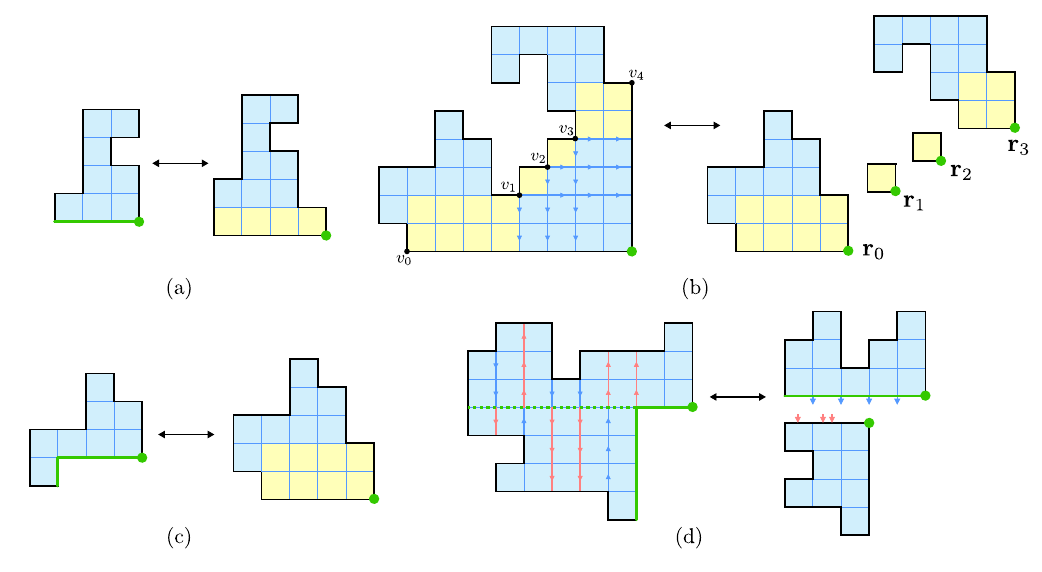}
    \caption{Illustrating the bijective relations underlying equations \eqref{eq:PCrelation}, \eqref{eq:BCrelation}, \eqref{eq:BErelation} and \eqref{eq:EPBrelation}.\label{fig:rigiddecomp}}
\end{figure}

The first equation \eqref{eq:PCrelation} reflects that for $p\geq 1$, gluing a row of length $p+1$ to the base of a rigid quadrangulation in $\mathcal{R}^{(p)}$ yields a C-type rigid quadrangulation in $\mathcal{R}^{(p+1)(1)}_{\mathrm{C}}$ with one convex corner extra, see Figure~\ref{fig:rigiddecomp}a.
The only rigid quadrangulation in $\mathcal{R}^{(p+1)(1)}_{\mathrm{C}}$ not obtained in this way is $\square$ for $p=0$ with weight $t x y$.

For the second equation \eqref{eq:BCrelation}, recall that a B-type $\mathbf{r} \in \mathcal{R}^{(p)(q)}_{\mathrm{B}}$ is also a C-type if $\mathbf{r}$ contains a rectangle of size $p\times q$ with corner at the root. 
This is equivalent to the absence of concave corners for which both rays end on a side adjacent to the root. 
This interpretation leads to a natural decomposition of a B-type rigid quadrangulation $\mathbf{r}\in\mathcal{R}^{(p)(q)}_{\mathrm{B}}$, see Figure~\ref{fig:rigiddecomp}b: let $v_1,\ldots,v_k$ be all $k\geq 0$ concave corners for which both rays end on a side adjacent to the root of $\mathbf{r}$, ordered from left to right.
Let further $v_0$ and $v_{k+1}$ be the endpoints of the horizontal and vertical side of $\mathbf{r}$ adjacent to the root.
Then for each $i=0,\ldots,k$, the rectangle with corners $v_i$ and $v_{i+1}$ is fully contained in $\mathbf{r}$ and cutting out this rectangle together with everything that connects to its left or top side gives a C-type rigid quadrangulation $\mathbf{r}_i$.
It is not hard to see that $\mathbf{r} \to (\mathbf{r}_0,\ldots,\mathbf{r}_k)$ is indeed a bijective correspondence between B-type and non-empty sequences of C-type rigid quadrangulations, such that rows, columns that end on a side adjacent to the root and the convex corners that are not on a side adjacent to the root nicely partition.
Hence, $B(t,x,y) = \sum_{k\geq 0} C(t,x,y)^{k+1}$.

For the third equation \eqref{eq:BErelation}, we note that for each $p,q\geq 1$ gluing a rectangle of size $(p+1)\times (q+1)$ to a rigid quadrangulation with 2-fold base $(\ell_1,\ell_2) = (p,q)$ in $\mathcal{R}^{(p,q)}$ gives a C-type rigid quadrangulation in $\mathcal{R}^{(p+1)(q+1)}_{\mathrm{C}}$, see Figure~\ref{fig:rigiddecomp}c. 
Then we only need to add to these the C-type rigid quadrangulations $\mathcal{R}^{(p+1)(q+1)}_{\mathrm{C}}$ with $p=0$ or $q=0$, for which we can refer back to \eqref{eq:PCrelation}.

The final equation \eqref{eq:BErelation} can be understood as follows.
Given a rigid quadrangulation with 2-fold base in $\mathbf{r}\in \mathcal{R}^{(p,q)}$, we can cut it into two pieces along the ray that extends the first side of the base (dotted green line in Figure~\ref{fig:rigiddecomp}d). 
Next, we get rid of all rays that point away from the new portion of the boundary of these pieces by merging their neighboring columns.
Then the top piece $\mathbf{r}_1$ is exactly of the type of $\mathcal{R}^{(\ell)}$ for some $\ell \geq p$ and the bottom piece $\mathbf{r}_2$ of B-type $\mathcal{R}^{(q)(m)}_{\mathrm{B}}$ for some $m \geq 1$.
In order for this operation to be bijective we need to record for each of the $m$ boundary edges of $\mathbf{r}_1$ how many downward rays were crossing it in $\mathbf{r}$.
The generating function for the bottom piece together with this information and weight $x^{-1}$ per downward ray is precisely $B\left(t,\frac{1}{1-x^{-1}},y\right)$.
The restriction to positive powers of $x$ in \eqref{eq:BErelation} is to take care of the inequality $\ell \geq p$. 

Besides the 2-catalytic system of equations, Bousquet-M\'elou and Elvey Price also identified a 1-catalytic equation in \cite[Proposition~9.3]{Bousquet-Melou_Refined_2025} that uniquely determines the generating function $P(t,y)$ of patches.
Denoting by $P^{(p)}(t) = [y^p]P(t,y) = \sum_{\mathbf{r}\in \mathcal{R}^{(p)}} t^{n(\mathbf{r})-1}$ the generating function of rigid quadrangulations with (1-fold) base of length $p$ for $p\geq 1$ and $P^{(0)}(t) = t$ by convention, it amounts to the system
\begin{align}
    P^{(p)}(t) = \sum_{\ell=0}^{p-1} P^{(\ell)}(t) P^{(p-\ell-1)}(t) + 2 \sum_{k,\ell\geq 0} \binom{\ell+k}{\ell} P^{(p+\ell)}(t) P^{(k)}(t) \qquad\text{for }p\geq 1.
\end{align}
Comparing with Lemma~\ref{lem:minimalsubmaps}, we see that the right-hand side reflects precisely the partition of $\mathcal{R}^{(p)}$ according to the type of the submap found at the first step of its row-by-row exploration.
Indeed, the summand in the first term on the right-hand side enumerates the rigid quadrangulations $\mathbf{r}$ containing $\mathbf{G}_{\ell,p-\ell-1}$. 
The summand of the second term accounts for the $\binom{\ell+k}{\ell}$ possible words $\sigma$ with $k$ letters $\downarrow$ and $\ell$ letters $\uparrow$ indexing the submaps $\mathbf{R}^p_{\sigma}$ and $\mathbf{L}^p_\sigma$.

\subsection{Rigid half-cylinders}\label{sec:trumpets}

With the decompositions of $B$-type and $C$-type rigid quadrangulations in place, the enumeration rigid half-cylinders is a straightforward matter. 
Recall the definition of the set $\mathcal{H}^{(p,q)}$ of rigid half-cylinders from Section~\ref{sec:jt}.

\begin{figure}[h]
    \centering
    \includegraphics[width=\linewidth]{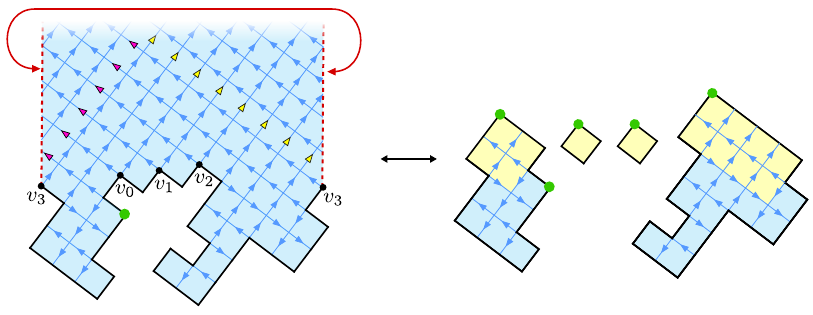}
    \caption{A rigid half-cylinder decomposes bijectively into a sequence of C-type rigid quadrangulation with the first one carrying an extra marked convex corner.\label{fig:trumpetdecomp}}
\end{figure}

\begin{proof}[Proof of Corollary~\ref{cor:trumpet}]
    The decomposition we use is very similar to that of a B-type rigid quadrangulations into a sequence of C-type quadrangulations (Figure~\ref{fig:rigiddecomp}b). 
    Let $v_0, \ldots, v_k$ be the concave corners of $\mathbf{h}\in \mathcal{H}^{(p,q)}$ at which two infinite rays start, in the order encountered when tracing the boundary of $\mathbf{h}$ starting at the root corner keeping the interior on the left, see Figure~\ref{fig:trumpetdecomp}.
    It should be clear that there is at least one such corner, so $k \geq 0$.
    Dissecting $\mathbf{h}$ along the rays starting at $v_0, \ldots, v_k$ yields a sequence of $k+1$ type-$C$ rigid quadrangulations rooted where the rays  intersected, and the first rigid quadrangulation in the sequence (the one between $v_k$ and $v_0$) carries an additional marked convex corner that does not share a side with the root corner.
    The total of lengths of the horizontal respectively vertical sides adjacent to the root (after rotating to the canonical orientation aligned with the upper-left quadrant) is $p$ respectively $q$.
    This is readily seen to be a bijective decomposition.
    At the level of formal generating functions it implies that
    \begin{align}
        {H^{(p,q)}}{}'(t) = [x^p y^q]\frac{\partial C(t,x,y)}{\partial t} \sum_{k=0}^\infty C(t,x,y)^k = [x^p y^q] \frac{\partial}{\partial t}\log \frac{1}{1-C(t,x,y)}.
    \end{align}
    Using $H^{(p,q)}(0) = 0$ and the formula for $C(t,x,y)$ from Corollary~\ref{cor:multivariate}, we thus find
    \begin{align*}
        H^{(p,q)}(t) &= [x^p y^q]\log\frac{1}{1-C(t,x,y)} = [x^p y^q]\log( B(t,x,y)+1) \\&= \sum_{n\geq \max(p,q)} \frac{1}{n+1}\binom{2n-p+1}{n}\binom{2n-q+1}{n} R(t)^{n+1}.
    \end{align*}
    The claimed expression for ${H^{(p,q)}}{}'(t)$ follows from computing the derivative.
\end{proof}

It remains to verify the claims concerning the asymptotic enumeration of the rigid trumpets.

\begin{proof}[Proof of Proposition~\ref{prop:trumpetasymp}]
Recall from Theorem~\ref{thm:rigidquadenum} that $R(t)$ is the formal power series solution to
\begin{align}\label{eq:Rdefrecap}
    \sum_{n=0}^\infty \frac{1}{n+1} \binom{2n}{n}^2 R(t)^{n+1} = t.
\end{align}
As shown in \cite[Proposition~8.3]{Bousquet-Melou_generating_2020} it has radius of convergence $t_*=\frac{1}{4\pi}$ and 
\begin{align}\label{eq:Rasymp}
    R(t) - \frac{1}{16} \sim \frac{1}{4} \frac{1-4\pi t}{\log(\frac{1}{1-4\pi t})}\qquad\text{as }t\to t_*.
\end{align}
By taking a derivative of \eqref{eq:Rdefrecap}, we have the identity
\begin{align}
    \frac{1}{R'(t)} = \sum_{n=0}^\infty \binom{2n}{n}^2 R(t)^n.
\end{align}
Therefore the rooted half-cylinder generating function $H^{(p,q)}{}'(t)$ of Corollary~\ref{cor:trumpet} can be expressed as
\begin{align}
    H^{(p,q)}{}'(t) = \frac{\sum_{n\geq \max(q,p)} \binom{2n-p+1}{n}\binom{2n-q+1}{n}R(t)^n}{\sum_{n\geq 0}\binom{2n}{n}^2 R(t)^n},
\end{align}
from which it is clear that $H^{(p,q)}{}'(t) \leq 1$ for $t < t_*$.
Hence, $H^{(p,q)}(t)$ has radius of convergence equal to $t_*$ and $H^{(p,q)}(t_*) < t_* H^{(p,q)}{}'(t_*)$ is finite.
Therefore the critical Boltzmann $(p,q)$-rigid half-cylinder at $t=t_*$ is well-defined.  

We now focus on the critical Boltzmann $(p,q)$-rigid half-cylinder $\mathbf{H}$ in the limit where $p$ and/or $q$ becomes large.
To lighten notation in the remainder of the proof we denote by $r = \sqrt{p^2 + q^2}$ the circumference of the cylinder, leaving the dependence on $p$ and $q$ implicit.
Writing for $\mu > 0$ fixed, $\mu_{p,q} \coloneqq \mu\, \frac{1}{r^2} \log r^2$, we are interested in estimating the expectation 
\begin{align}
    \mathbb{E}^{(p,q)}_{t_*}\left[ e^{-\mu_{p,q} n(\mathbf{H})}\right] &= \frac{1}{H^{(p,q)}(t_*)}\sum_{n \geq \max(p,q)} \frac{1}{n+1}\binom{2n-p+1}{n}\binom{2n-q+1}{n} R(t_* e^{-\mu_{p,q}})^{n+1}\label{eq:halfcylinderexpec}
\end{align}
as $r \to \infty$.
The asymptotic \eqref{eq:Rasymp} translates into
\begin{align}
    R(t_* e^{-\mu_{p,q}}) &= \frac{1}{16} \exp\left(-4 \frac{\mu_{p,q} + o(\mu_{p,q})}{\log(1/\mu_{p,q})}\right) = \frac{1}{16} \exp\left(-4 \frac{\mu}{r^2} + o(r^{-2})\right)\qquad \text{as } r\to\infty.
\end{align}
Approximating the binomials by a normal distribution, one may verify that there exists a constant $C>0$ such that for all $n,p,q \geq 1$ we can bound
\begin{align}
    \frac{1}{n+1}\binom{2n-p+1}{n}\binom{2n-q+1}{n} 16^{-n-1} \leq C \,\frac{2^{-p-q}}{n^2} \exp\left(- \frac{r^2}{4n}\right),
\end{align}
while we have the asymptotic estimate 
\begin{align}
    \frac{1}{n+1}\binom{2n-p+1}{n}\binom{2n-q+1}{n} 16^{-n-1} = \frac{2^{-p-q-2}}{\pi n^2} \exp\left(- \frac{r^2}{4n} + O(n^{-1/2})\right)\qquad \text{as }n\to\infty,
\end{align}
where the error $O(n^{-1/2})$ is uniform in $p$ and $q$ for, say, $r \leq  n^{2/3}$.
The sum in \eqref{eq:halfcylinderexpec} restricted to the range $n \leq r^{3/2}$ is bounded above by
\begin{align}
    \sum_{ n \leq r^{3/2}} &\frac{1}{n+1}\binom{2n-p+1}{n}\binom{2n-q+1}{n} 16^{-n-1}  \leq  C\,2^{-p-q}\sum_{n \leq r^{3/2}} \frac{1}{n^2} \exp\left(- \frac{r^2}{4n} \right) \nonumber\\
    & \leq C \, 2^{-p-q} \int_0^{r^{3/2}} \frac{\rmd x}{x^2} \exp\left(-\frac{r^2}{4x}\right) = 4 C\, \frac{2^{-p-q}}{r^2}e^{-\tfrac{1}{4}\sqrt{r}}
\end{align}
for all $p,q \geq 1$. 
The sum restricted to $n \geq r^{3/2}$ evaluates asymptotically to
\begin{align}
    \sum_{n \geq r^{3/2}} \frac{1}{n+1}&\binom{2n-p+1}{n}\binom{2n-q+1}{n} 16^{-n-1}  \exp\left( -4(\mu + o(1)) \frac{n}{r^2}\right) \nonumber\\
    &= \frac{2^{-p-q-2}}{\pi}\sum_{n \geq r^{3/2}} \frac{1}{n^2} \exp\left(- \frac{r^2}{4n} -(\mu+o(1)) \frac{4n}{r^2}+O(n^{-1/2})\right)\nonumber\\
    &= \frac{2^{-p-q-2}}{\pi} (1+o(1)) \int_{r^{3/2}}^\infty \frac{\rmd x}{x^2} \exp\left(-\frac{r^2}{4x}-(\mu+o(1)) \frac{4x}{r^2}\right) \nonumber\\
    &= \frac{2^{-p-q}}{\pi r^2} (1+o(1))\int_{4 r^{-1/2}}^\infty \frac{\rmd t}{t^2} \exp\left(-\frac{1}{t}-(\mu+o(1))t\right)\nonumber\\
    &= \frac{2^{-p-q}}{\pi r^2} \left(\sqrt{4\mu}\, K_1\left(\sqrt{4\mu}\right) + o(1) \right)  \qquad\text{as } r\to\infty,
\end{align}
where $K_1$ is a modified Bessel function of the second kind.
In particular, since $\sqrt{4\mu} K_1(\sqrt{4\mu}) \to 1$ as $\mu \to 0$ this implies
\begin{align}
    H^{(p,q)}(t_*) = \sum_{n\geq \max(p,q)} \frac{1}{n+1} \binom{2n-p+1}{n}\binom{2n-q+1}{n} R(t_*)^{n+1} \sim \frac{2^{-p-q}}{\pi r^2}\quad \text{as }r\to\infty.
\end{align} 
Furthermore,
\begin{align}
    \mathbb{E}^{(p,q)}_{t_*}\left[ e^{-\mu_{p,q} n(\mathbf{T})}\right] &\to \sqrt{4\mu}\, K_1\left(\sqrt{4\mu}\right) = \int_0^\infty \rmd t \,e^{ -t - \frac{\mu}{t}}\qquad \text{as }r\to\infty.
\end{align} 
This implies the convergence in distribution of $n(\mathbf{R}) / (r^2/\log r^2)$ to an inverse-exponential $1/\mathcal{E}(1)$.
Since $\log n(\mathbf{R}) / \log r^2$ converges to $1$ in probability, we have the same convergence in distribution for $(n(\mathbf{R}) \log n(\mathbf{R})) / r^2$.
\end{proof}

Let us finish by explaining how this distributional convergence is equivalent to the convergence of unnormalized measures quoted in Section~\ref{sec:jt}.
Recall the measure $\mu^{(p,q)}$ that assigns mass $\mu^{(p,q)}(\{\mathbf{h}\}) = 2^{p+q} t_*^{n(\mathbf{h})+1} / (n(\mathbf{h})+1)$ to a rigid half-cylinder $\mathbf{h}$.
By definition of the generating function $H^{(p,q)}(t)$ (see Corollary~\ref{cor:trumpet}) this measure has total mass
\begin{align}
    \mu^{(p,q)}\left(\mathcal{H}^{(p,q)}\right) = 2^{p+q} H^{(p,q)}(t_*).
\end{align}
According to Proposition~\ref{prop:trumpetasymp} it is asymptotically equivalent to $\pi^{-1}(p^2+q^2)^{-1}$ as $p^2+q^2 \to \infty$.
Recalling also the definition $\mathsf{Len}^\varepsilon = \frac{1}{2}\varepsilon^2 n(\mathbf{h}) \log n(\mathbf{h})$ of the renormalized boundary length, we observe that for fixed $b>0$ and $\varepsilon = b / \sqrt{p^2+q^2}$,
\begin{align*}
    \pi (p^2+q^2)\, \mathsf{Len}^\varepsilon_* \mu^{(p,q)} = \frac{\pi b^2}{\varepsilon^2}\, \mathsf{Len}^\varepsilon_* \mu^{(p,q)}
\end{align*}
is precisely the probability distribution under $\mathbb{P}^{(p,q)}_{t_*}$ of the random variable
\begin{align*}
    \mathsf{Len}^\varepsilon(\mathbf{H}) = \frac{b^2}{2} \frac{n(\mathbf{H})\log n(\mathbf{H})}{p^2+q^2}.
\end{align*}
By Proposition~\ref{prop:trumpetasymp}, the latter converges in distribution to $\frac{b^2}{2} \frac{1}{\mathcal{E}(1)}$ and \eqref{eq:Zhalfcyl} follows from rearranging constant factors.

\bibliographystyle{siam}
\bibliography{diskimmersion}	
	
\end{document}